\newtheorem{theorem}{Theorem}[section]
\newtheorem{theorem*}{Theorem A\!\!}
\newtheorem{proposition}{Proposition}[section]
\newtheorem{proposition*}{Proposition A\!\!}
\newtheorem{corollary}{Corollary}[section]
\newtheorem{corollary*}{Corollary A\!\!}
\newtheorem{lemma}{Lemma}[section]
\newtheorem{definition}{Definition}[section]
\DeclareMathOperator{\Id}{Id}
\begin{document}

\title{Singular conformally invariant trilinear forms and generalized Rankin Cohen operators}

\author{Ralf Beckmann \qquad Jean-Louis Clerc}
\date{April 18, 2011}
\maketitle

\hskip6cm {\it Dedicated to Elias Stein}

\hskip 6cm {\it on the occasion of his 80th birthday}

\begin{abstract} The most singular residues of the standard meromorphic family of trilinear conformally invariant forms on $\mathcal C^\infty_c(\mathbb R^d)$ are computed. Their expression involves covariant bidifferential operators (generalized Rankin Cohen operators), for which new formul\ae \ are obtained. The main tool is a Bernstein-Sato identity for the kernel of the forms.
\end{abstract}

\footnotemark[0]{2000 Mathematics Subject Classification : 22E45, 43A85}

\section*{Introduction}

Let $E$ be a finite dimensional Euclidean space of dimension $d, (d\geq 2)$\footnote{The case $d=1$ could be treated along the same lines, but there are some differences, which would require separate statements. See \cite{mol} for a study of this case.}. Let $G\simeq SO_0(1,d+1)$ be the (connected component of) the conformal group of $E$, acting  by rational transformations on $E$. Much interest has been devoted to various invariant or covariant objects for this action. There is a natural action of $G$ on a space of densities on $E$. Identifying the densities with functions on $E$, the action is given by
\[\pi_\lambda(g) f(x) = \kappa(g^{-1}, x)^{\rho+\lambda}f(g^{-1}(x))\ ,
\]
where $\kappa(g,x)$ is the \emph{conformal factor} (the infinitesimal rate of dilation) of the transformation $g\in G$ at $x\in E$, $\lambda$ is a complex parameter and $\rho=\frac{d}{2}$. Let $\lambda_1,\lambda_2,\lambda_3$ be three complex numbers. A continuous  trilinear form $\mathcal L$ on $\mathcal C^\infty_c(E)\times \mathcal C^\infty_c(E) \times \mathcal C^\infty_c(E)$ is said to be \emph{conformally invariant} with respect to $\pi_{\lambda_1}\otimes \pi_{\lambda_2}\otimes \pi_{\lambda_3}$, if, for any three functions $f_1,f_2,f_3\in \mathcal C^\infty_c(E)$
\[\mathcal L(\pi_{\lambda_1}(g) f_1, \pi_{\lambda_2}(g) f_2,\pi_{\lambda_3}(g) f_3)= \mathcal L(f_1,f_2,f_3)
\]
where $g$ is in $G$ (strictly speaking, defined on the union of the supports of the three functions). These trilinear forms have been investigated in a previous work of the second author in collaboration with B. \O rsted (see \cite{co}). Generically, for $\boldsymbol \lambda =(\lambda_1,\lambda_2,\lambda_3)$ in $\mathbb C^3$, there is a unique (up to a multiple) such invariant trilinear form. Viewing the trilinear form as a distribution on $E\times E\times E$, it has a smooth density on the open set \[\{(x_1,x_2,x_3)\in E\times E\times E \,; x_1\neq x_2, x_2\neq x_3, x_3\neq x_1\}\ , \] given by
\[l_{\boldsymbol \beta}(x_1,x_2,x_3) = \vert x_1-x_2\vert^{\beta_3}\vert x_2-x_3\vert^{\beta_1} \vert x_3-x_1\vert^{\beta_2}
\]
where $\boldsymbol \beta= (\beta_1,\beta_2,\beta_3)$ is a triplet of complex numbers, uniquely determined by $\boldsymbol \lambda =(\lambda_1,\lambda_2,\lambda_3)$ (see \eqref{betalambda}). The corresponding distribution $\mathcal L_{\boldsymbol \beta}$ is defined by meromorphic continuation, and has simple poles along certain planes in $\mathbb C^3$. The study of the   residues was begun by the second author in \cite{c}, and the present paper deals with the most singular residues. They are distributions supported on the diagonal $\mathcal D = \{(x,x,x)\, ; x\in E\}$. They turn out to be intimately related with \emph{covariant bidifferential operators}, that is differential operators from $\mathcal C^\infty_c(E\times E)$ into $\mathcal C^\infty_c(E)$ which satisfy a relation of the form
\[ D(\pi_\lambda(g) f_1\otimes \pi_{\mu}(g)f_2) = \pi_\nu(g) D(f_1\otimes f_2)
\]
for some $(\lambda,\mu,\nu)\in \mathbb C^3$ and  for any $g\in G, f_1,f_2\in \mathcal C^\infty_c(E)$. Such operators have a long history (see \cite{d}, \cite{e}), and the most celebrated ones are the \emph{Rankin Cohen operators}, which are holomorphic bidifferential operators on the complex upper half plane and covariant under the group $PSL_2(\mathbb R)$. In the context of conformal geometry, they were studied by Ovsienko and Redou (\cite{or}). See also \cite {kr}, \cite{m}.

The basic ingredient we use for computing the residues is a \emph{Bernstein-Sato identity} (cf Theorem \ref{bs}). Recall that for $f_1,f_2,f_3$ three arbitrary nonnegative polynomials on $E$, there exists a differential operator $B= B(x_1,x_2,x_3,s_1,s_2,s_3,\partial_{x_1}, \partial_{x_2}, \partial_{x_3})$ on $E$ which is polynomial in the $x_j$ and $s_j$, and a polynomial $b$ on $\mathbb C^3$, such that
\begin{equation}
B (f_1^{s_1+1} f_2^{s_2} f_3^{s_3}) = b(s_1,s_2,s_3) f_1^{s_1} f_2^{s_2}f_3^{s_3}\ .
\end{equation}
Such identities exist in general (cf \cite{sab1}, \cite{sab2}), but their explicit determination is seldom known. Once such an identity is known,  the computation of the residues is easy. As a consequence, we find new expressions  for the covariant bidifferential operators alluded to previously. More general results on Bernstein-Sato identities will appear in \cite{b}.

The plan of the paper is as follows. Section 1 collects some results on distributions supported on a subspace. Section 2 is an elementary approach to computing the residues, much in the spirit of  Gelfand and Shilov (\cite {gs}). In particular it allows to determine the residues along the "first" plane of poles. Section 3 exploits  the invariance property of the trilinear forms and the covariance property of the associated bidifferential operators, making connection with the results of \cite{or}. Section 4 is devoted to the Bernstein-Sato identity (Theorem \ref{bs}), which is used in section 5 to give a formula for the residues, and as a consequence, a new expression for the covariant bidifferential operators (see  \eqref{covbidiff2}). Section 6 presents several remarks and perspectives on the subject.

The second author wishes to dedicate the present paper to Elias Stein, on the occasion of his 80th birthday.

\section{Distributions supported on a subspace and transverse differential operators}

Let $E$ be a finite dimensional real vector space, and let $V$ be a linear subspace of $E$. Let $E'$ be the dual space of $E$, and let
\[V^\perp = \{ \xi\in E'\ ;\ \xi_{\vert V} = 0\}\ .
\]
Let $u$ be a distribution on $V$. The assignment
\[\mathcal C^\infty_c(E)\ni\varphi \longmapsto (u, \varphi_{\vert V})
\]
defines a distribution on $E$, the natural extension of $u$, hereafter denoted by $\widetilde u$. Clearly $Supp(\widetilde u) = Supp(u)\subset V$. We now characterize the \emph{wavefront set} of $\widetilde u$ (cf \cite {h}).
\begin{proposition}\label{pwf}
 Let $u$ be in $\mathcal D'(V)$, and let $\widetilde u$ the associated distribution on $E$. Then
\begin{equation}\label{wf}
WF(\widetilde u) = \{ (x,\xi)\in Supp(u)\times (E'\setminus 0)\,;\,\xi \in V^\perp {\rm \ or\ }(x,\xi_{\vert V})\in WF(u) \}\ .
\end{equation}
\end{proposition}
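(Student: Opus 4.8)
The plan is to reduce everything to a single Fourier-decay computation. First I would pick a complement $W$ with $E=V\oplus W$; dually $E'\cong V^*\oplus W^*$, the annihilator $V^\perp$ being the $W^*$-factor and the restriction map $r\colon E'\to V^*$, $r(\xi)=\xi_{\vert V}$, being the projection onto $V^*$. Under this splitting $\widetilde u$ is nothing but the tensor product $u\otimes\delta_0$, with $\delta_0$ the Dirac mass at $0\in W$. The computation on which the whole proof rests is that, for any $\chi\in\mathcal C^\infty_c(E)$ and any $\xi\in E'$,
\[
\widehat{\chi\widetilde u}(\xi)=\widehat{\psi u}\bigl(\xi_{\vert V}\bigr),\qquad \psi:=\chi_{\vert V},
\]
which is immediate from the definition of $\widetilde u$. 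The crucial feature is that the right-hand side depends on $\xi$ only through $r(\xi)=\xi_{\vert V}$, i.e. not at all on the conormal component of $\xi$. (One could also read off the inclusion $\subseteq$ from H\"ormander's estimate for the wavefront set of a tensor product, but the identity above yields equality directly.)

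Since $Supp(\widetilde u)=Supp(u)\subset V$, it suffices to take a base point $x_0=(x_0',0)$ with $x_0'\in Supp(u)$ and to decide, for each $\xi_0\neq0$, whether $(x_0,\xi_0)\in WF(\widetilde u)$, using the standard characterization via rapid decay of $\widehat{\chi\widetilde u}$ on a conic neighborhood. I would split according to the position of $\xi_0$. Suppose first $\xi_0\notin V^\perp$, so $\eta_0:=r(\xi_0)\neq0$. Then on a sufficiently narrow open cone $\Gamma$ around $\xi_0$ one has $|r(\xi)|\geq c\,|\xi|$ for some $c>0$, and $r(\Gamma)$ is an open cone around $\eta_0$. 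The displayed identity transports rapid decay in both directions between $\Gamma$ and $r(\Gamma)$; taking $\chi=\psi\otimes\rho$ with $\rho(0)\neq0$ gives $(x_0,\xi_0)\in WF(\widetilde u)$ if and only if $(x_0',\eta_0)\in WF(u)$, which is the second alternative in \eqref{wf}.

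Suppose now $\xi_0\in V^\perp$, so $r(\xi_0)=0$. The point is that every open conic neighborhood $\Gamma$ of $\xi_0$ has $r(\Gamma)=V^*$, and that for each fixed $\eta_*\in V^*$ one can find preimages in $\Gamma$ of arbitrarily large norm (add large positive multiples of $\xi_0$ to one preimage of $\eta_*$). Hence rapid decay of $\widehat{\chi\widetilde u}=\widehat{\psi u}\circ r$ on $\Gamma$ forces $\widehat{\psi u}(\eta_*)=0$ for all $\eta_*$, i.e. $\psi u=0$. Therefore $(x_0,\xi_0)\notin WF(\widetilde u)$ exactly when there is a cutoff $\psi$ with $\psi(x_0')\neq0$ and $\psi u=0$, which happens precisely when $x_0'\notin Supp(u)$. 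Thus all of $Supp(u)\times(V^\perp\setminus0)$ belongs to $WF(\widetilde u)$, which is the first alternative in \eqref{wf}; combining the two cases yields exactly \eqref{wf}.

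The step I expect to be the main obstacle is the careful handling of the conormal directions, forced by the fact that $\widehat{\chi\widetilde u}(\xi)$ does not depend on the conormal part of $\xi$. This independence is simultaneously what puts all of $V^\perp$ into the wavefront set (the transform cannot decay along those directions) and what prevents, in the case $\xi_0\notin V^\perp$, the use of an arbitrary conic neighborhood: $\Gamma$ must be shrunk both to avoid $V^\perp$ and to secure the comparability $|r(\xi)|\geq c\,|\xi|$, without which decay in $|\xi|$ could not be deduced from decay of $\widehat{\psi u}$. Once this transversality is arranged, the remainder is routine bookkeeping with the Fourier characterization of $WF$.
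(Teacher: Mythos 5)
Your proof is correct and follows essentially the same route as the paper's: both rest on fixing a complement $W$ and the identity $\widehat{\chi\widetilde u}(\xi)=\widehat{\chi_{\vert V}\,u}(\xi_{\vert V})$, followed by a case split on whether $\xi_0\in V^\perp$ and the comparability $\vert\xi_{\vert V}\vert\geq c\vert\xi\vert$ on a sufficiently narrow cone around a non-conormal $\xi_0$. Your treatment of the conormal case (excluding rapid decay for \emph{every} cutoff by taking preimages of a fixed $\eta_*$ of arbitrarily large norm, and noting that $\psi u=0$ with $\psi(x_0')\neq 0$ forces $x_0'\notin Supp(u)$) is, if anything, slightly more complete than the paper's terse version of that step.
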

\begin{proof} Choose a subspace $W$ such that $E=V\oplus W$. For $\xi\in E'$,  let $\xi=\xi'+\xi''$, where $\xi'\in W^\perp$ and $\xi''\in V^\perp$.
Let $\varphi$ be in $\mathcal C^\infty_c(E)$. 
Then $\varphi \widetilde u$ is a distribution with compact support and its Fourier transform is given by
\begin{equation*}
\mathcal F(\varphi \widetilde u)(\xi)=(u, e^{-i(\xi',.)} \varphi_{\vert V})\ .
\end{equation*}
Let $(x_0,\xi_0)$ be in the set described by the RHS of \eqref{wf}. If $\xi'_0=0$, then choose $\varphi$ such that  $\langle u ,\varphi_{\vert V}\rangle \neq 0$ (which is always possible since $x_0$ belongs to  $Supp(u)$), so that $\mathcal F(\varphi \widetilde u)(\xi)$  cannot decrease rapidly in a conic neighbourhoud of $\xi_0$. If $(x_0,\xi'_0)$ belongs to $WF(u)$, $\mathcal F(\varphi \widetilde u)(\xi)$ cannot decrease rapidly on a conic neigbourhood of  $\xi'_0$ in $W^\perp\setminus 0$, {\it a fortiori}  on a conic neigbourhood of $\xi_0$ in $E'\setminus 0$. Conversely, assume $(x_0,\xi_0')$ does not belong to $WF(u)$ and $\xi'_0\neq0$. Then, for $\varphi$ with a sufficently small support near $x_0$ and for $\xi$ in a (small enough) conic neigbourhood of $\xi_0$, $\mathcal F(\varphi \widetilde u)(\xi)$ can be dominated by 
$C_N(1+\vert \xi'\vert)^{-N}$ for any integer $N$. But in a (sufficently small) conic neighbourhood of $\xi'_0$ one has $\vert \xi\vert \leq C \vert\xi'\vert$ for some constant $C>0$, so that $\mathcal F(\varphi \widetilde u)(\xi)$ is dominated by $C_N (1+\vert \xi\vert)^{-N}$. Hence $(x_0,\xi_0)\notin WF(\widetilde u)$.
\end{proof}

To further investigate distributions supported on $V$, one needs to introduce \emph{normal derivatives}. Fix a splitting $E=V\oplus W$ as above, and choose ccordinates $w_1,w_2,\dots, w_p$ on $W$, which can be regarded as (a partial set of) coordinates on $E$ by extending them by $0$ on $V$. Let $I=(i_1,\dots, i_p)$ be a p-tuple of natural integers, let $\vert I\vert = i_1+i_2 +\dots +i_p$. Let $D_I$ be the operator (the $D_I$'s are often referred to as \emph{normal derivatives}), defined by
\[D_I\varphi\,(v) = \frac{\partial^{\vert I\vert}\varphi}{\partial w_1^{i_1}\dots  \partial w_p^{i_p}}(v),
\]
mapping smooth functions on $E$ to smooth functions on $V$. To any distribution $u$ on $V$, one can associate the distribution $D_I\widetilde u$ defined by
\[ (-1)^{\vert I\vert}( D_I \widetilde u , \varphi)  =(u, D_I \varphi)\ .
\]
Observe that $WF(D_I\widetilde u) = WF(\widetilde u)$. The inclusion $\subset$  is obvious, whereas the opposite inclusion is obtained by testing against functions $\varphi$ of the form 
\begin{equation}\label{tf}
\varphi(v,w) = \chi(v) w^I \psi(w),
\end{equation}
 where $\chi \in \mathcal C_c^\infty(V)$, $w^I = w_1^{i_1}\dots w_d^{i_d}$ and $\psi$ is a function in $\mathcal C_c^\infty(W)$ which is identically equal to $1$ in a neigbourhood of $0$.

Now let $U$ be a distribution on $E$, with $Supp(U)\subset V$. The structure theorem of L. Schwartz asserts that there exist distributions $u_I$ on $V$ such that
\[U= \sum_{I} D_I \widetilde u_I,
\]
where the sum is locally finite. Moreover, the $u_I$'s are unique.

If all the distributions $u_I$ are given by smooth densities, then from  \eqref{wf}, $WF(U) \subset  E\times (V^\perp\setminus 0)$. The converse is true.

\begin{proposition}\label{smooth}
 Let $U$ be a distribution supported in $V$, and assume that
\[WF(U)\subset V\times (V^\perp \setminus 0)\ .
\]
Then there exist smooth functions $u_I$ on $V$ such that, for any $\varphi\in \mathcal C^\infty_c(E)$
\[ ( U, \varphi)  = \int_V  \sum_Iu_I(v) D_I \varphi(v)\, dv\ .
\]
\begin{proof} By the previous result, $U = \sum_I D_I\widetilde u_I$, where $u_I$ is some distribution on $V$. The assumption on  the wavefront set of $U$, when tested against the functions of the form given by \eqref{tf} implies that, for each $d$-tuple $I$, $WF(\widetilde u_I)\subset V\times (V^\perp\setminus 0)$, which implies that $WF(u_I)=\emptyset$ by Proposition \ref{pwf}. As the projection onto the first coordinate of the wavefront set is precisely the singular support,
each $u_I$ coincides with a smooth function on $E$.
\end{proof}
\end{proposition}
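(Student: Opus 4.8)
The plan is to reduce the statement to a wavefront computation on each piece of the Schwartz decomposition of $U$. First I would apply the structure theorem quoted above: since $Supp(U)\subset V$, there are unique $u_I\in\mathcal D'(V)$ with $U=\sum_I D_I\widetilde{u_I}$, the sum being locally finite. Unwinding the defining identity $(D_I\widetilde{u_I},\varphi)=(-1)^{|I|}(u_I,D_I\varphi)$, the asserted formula is exactly this expansion once each $u_I$ is a smooth density (the sign $(-1)^{|I|}$ being absorbed into $u_I$). Thus everything reduces to proving that each $u_I$ is given by a smooth function, and since the singular support is the projection of the wavefront set onto the base, it suffices to show $WF(u_I)=\emptyset$ for every $I$.

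The key device is to isolate a single $u_J$ by pairing $U$ against the test functions \eqref{tf}. Fix $\psi\in\mathcal C^\infty_c(W)$ with $\psi\equiv1$ near $0$, write $g_J(w)=w^J\psi(w)$, and let $\chi\in\mathcal C^\infty_c(V)$ be supported near a point $x_0\in V$. For a covector $\xi=\xi'+\xi''$ with $\xi'\in W^\perp$, $\xi''\in V^\perp$, consider the Fourier transform of the compactly supported distribution $(\chi\otimes g_J)U$,
\[\mathcal F\big((\chi\otimes g_J)U\big)(\xi)=\Big\langle U,\ \chi(v)e^{-i(\xi',v)}\,g_J(w)e^{-i(\xi'',w)}\Big\rangle=\sum_I(-1)^{|I|}\Big\langle u_I,\ \chi\,e^{-i(\xi',\cdot)}\,\partial_w^I\big(g_J\,e^{-i(\xi'',\cdot)}\big)\big|_{w=0}\Big\rangle.\]
Since every $w$-derivative of $\psi$ vanishes at $0$, Leibniz gives $\partial_w^I(w^J\psi)|_{w=0}=J!\,\delta_{IJ}$; setting $\xi''=0$ (so that the factor $e^{-i(\xi'',\cdot)}$ becomes $1$), all terms but $I=J$ drop out and one obtains $\mathcal F((\chi\otimes g_J)U)(\xi',0)=(-1)^{|J|}J!\,\widehat{\chi u_J}(\xi')$, where $\xi'\in W^\perp$ is identified with a covector on $V$.

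The main obstacle is precisely that one may not argue termwise: wavefront sets of sums can shrink through cancellation, so the hypothesis on $U$ cannot be transported directly to each $\widetilde{u_I}$, and the linear isolation above is what circumvents this. Granting the displayed identity, suppose $(x_0,\eta_0)\in WF(u_J)$ with $\eta_0\neq0$, and regard $\eta_0$ as $\xi'_0\in W^\perp$; then $\xi_0:=\xi'_0$ has $\xi''_0=0$, so $\xi_0\notin V^\perp$ and, by the hypothesis $WF(U)\subset V\times(V^\perp\setminus0)$, $(x_0,\xi_0)\notin WF(U)$. As multiplication by the smooth function $\chi\otimes g_J$ does not enlarge the wavefront set, $(x_0,\xi_0)\notin WF((\chi\otimes g_J)U)$ either; hence, after shrinking the support of $\chi$, $\mathcal F((\chi\otimes g_J)U)$ decays rapidly in an open conic neighbourhood of $\xi_0$ in $E'\setminus0$. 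Intersecting that neighbourhood with $W^\perp$ (the slice $\xi''=0$) yields a conic neighbourhood of $\eta_0$ on which $\widehat{\chi u_J}$ decays rapidly, contradicting $(x_0,\eta_0)\in WF(u_J)$. Therefore $WF(u_J)=\emptyset$, each $u_J$ is smooth, and substitution into $U=\sum_I D_I\widetilde{u_I}$ gives the integral representation. Equivalently, the same isolation shows $WF(\widetilde{u_I})\subset V\times(V^\perp\setminus0)$, whereupon Proposition \ref{pwf} forces $WF(u_I)=\emptyset$, as in the cited argument.
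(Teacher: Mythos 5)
Your proof is correct and follows essentially the same route as the paper: Schwartz's structure theorem, then testing against the functions \eqref{tf} to isolate each $u_I$ and transfer the wavefront bound (you merely make explicit the slice computation and the point that one cannot argue termwise on the sum, which the paper leaves implicit, and you bypass Proposition \ref{pwf} by reading off $WF(u_J)=\emptyset$ directly from the restriction to $W^\perp$).
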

A \emph{transverse differential operator} $D$ is a mapping form $\mathcal C_c^\infty(E)$ in $\mathcal C^\infty_c(V)$ which is given by
\[D\varphi(v) = \sum_I a_I(v) D_I\varphi(v)\ ,
\]
where $D_I$ are the normal derivatives introduced earlier, and the $a_I$'s are smooth functions on $V$. The sum is always assumed to be locally finite. Notice that the $a_I$ are well determined, again by testing the operator against functions of the form given by \eqref{tf}. The previous proposition can be reformulated as : any distribution $U$ supported on a linear subspace $V$, such that $WF(U) \subset V\times (V^\perp\setminus 0)$ can be realized as
\begin{equation}
( U, \varphi) = \int_V D\varphi(v) dv\ ,
\end{equation}
for some transverse differential operator $D$. Moreover, (once a splitting of $E$ as $V\oplus W$ has been chosen) $D$ is uniquely determined .

Invariance properties of a singular distribution are reflected in the associated transverse differential operator. Here is a special case, fitted for our needs.

\begin{proposition}\label{transinv} Let $U$ be in $\mathcal D'(E)$, supported on $V$. Assume that $U$ is invariant under translations by elements of $V$. Then \[\big( U, \varphi\big) = \int_V D\varphi(v) dv\ ,
\]
where $D$ is a transverse differential operator \emph{with constant coefficients}.
\end{proposition}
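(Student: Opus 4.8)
The plan is to reduce everything to the structure theorem of L. Schwartz quoted above, combined with its uniqueness clause and the classical fact that a translation-invariant distribution on a vector space is a constant. Since $\operatorname{Supp}(U)\subset V$, the structure theorem lets me write $U=\sum_I D_I\widetilde{u_I}$ with uniquely determined distributions $u_I\in\mathcal D'(V)$, so the whole task is to identify each $u_I$.

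First I would check that the decomposition is compatible with $V$-translations. Writing $\tau_{v_0}$ for translation by $v_0\in V$ (acting on distributions by $(\tau_{v_0}T,\varphi)=(T,\tau_{-v_0}\varphi)$), the key point is that $\tau_{v_0}$ commutes with the normal derivatives $D_I$ and with the extension map $u\mapsto\widetilde u$. Concretely, since $D_I$ differentiates only in the $W$-directions while $v_0$ has no $W$-component, one has $D_I(\tau_{-v_0}\varphi)=\tau_{-v_0}^V(D_I\varphi)$ on $V$, where $\tau_{\cdot}^V$ denotes translation on $V$; unwinding the defining identity $(-1)^{|I|}(D_I\widetilde u,\varphi)=(u,D_I\varphi)$ then gives $\tau_{v_0}D_I\widetilde u=D_I\widetilde{\tau_{v_0}^V u}$. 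Hence $\tau_{v_0}U=\sum_I D_I\widetilde{\tau_{v_0}^V u_I}$.

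Now I would combine invariance with uniqueness. By hypothesis $\tau_{v_0}U=U$ for every $v_0\in V$, so the two structure decompositions $\sum_I D_I\widetilde{\tau_{v_0}^V u_I}$ and $\sum_I D_I\widetilde{u_I}$ coincide; the uniqueness part of the structure theorem forces $\tau_{v_0}^V u_I=u_I$ for every $I$ and every $v_0\in V$. Thus each $u_I$ is a translation-invariant distribution on $V$. Differentiating the relation $\tau_{tv_0}^V u_I=u_I$ at $t=0$ shows that all first-order directional derivatives of $u_I$ vanish, and a distribution on $V$ with vanishing gradient is constant; so $u_I=c_I$ for some $c_I\in\mathbb C$, identified with the density $c_I\,dv$.

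Substituting back, $(U,\varphi)=\sum_I(-1)^{|I|}c_I\int_V D_I\varphi(v)\,dv=\int_V D\varphi(v)\,dv$ with $D=\sum_I a_I D_I$ and constant coefficients $a_I=(-1)^{|I|}c_I$, which is exactly the desired constant-coefficient transverse differential operator. The only genuinely delicate point is the commutation/uniqueness step: one must verify that translating $U$ yields a bona fide structure decomposition term by term, so that the uniqueness clause applies — this is precisely the computation sketched in the second paragraph — after which the classification of translation-invariant distributions is entirely standard.
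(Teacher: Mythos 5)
Your argument is correct, but it takes a genuinely different route from the paper's. The paper first converts $V$-invariance into the infinitesimal conditions $X_vU=0$, invokes H\"ormander's Theorem 8.3.1 to bound the wavefront set by $V\times(V^\perp\setminus 0)$, applies Proposition \ref{smooth} to conclude that the coefficients $a_I$ are smooth functions, and only then uses integration by parts together with test functions of the form \eqref{tf} to get $X_{v_0}a_I=0$, i.e.\ constancy. You instead go straight to the Schwartz structure theorem, check that translation by $v_0\in V$ commutes with the normal derivatives and with the extension map (so that $\tau_{v_0}U=\sum_I D_I\widetilde{\tau^V_{v_0}u_I}$ is again a structure decomposition), and let the uniqueness clause do the work: each $u_I$ is then a translation-invariant distribution on $V$, hence a constant. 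This bypasses the microlocal analysis of Propositions \ref{pwf} and \ref{smooth} entirely and is more elementary and self-contained; what the paper's route buys is the reuse of machinery it needs anyway for Proposition \ref{smooth}, and in fact the two arguments are close relatives, since the uniqueness of the $u_I$ is itself proved by testing against functions \eqref{tf}, so your appeal to uniqueness is the integrated form of the paper's infinitesimal computation $X_{v_0}a_I=0$. Two small points worth making explicit: the differentiation of $t\mapsto\tau^V_{tv_0}u_I$ at $t=0$ should be justified by the smoothness of $t\mapsto\tau^V_{-tv_0}\chi$ into $\mathcal C^\infty_c(V)$ for each fixed test function $\chi$; and local finiteness of the structure decomposition combined with the constancy of the $u_I$ forces all but finitely many $c_I$ to vanish, so the resulting $D$ is indeed a finite constant-coefficient transverse operator.
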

\begin{proof}
Let $v$ be any element of $V\setminus 0$, and let $X_v$ be the vector field on $E$ which is constant and equal to $v$ at each point of $E$. Then, the invariance property of $U$ amounts to the equalities $X_vU = 0$ for any $v\in V$. Hence, by \cite{h} Theorem 8.3.1

\[WF(U) \subset \{ (x,\xi)\in V\times (E'\setminus 0) , \xi(v) = 0\}\ .\]
As this is valid for any $v\in V$, 
\[WF(U) \subset \{ (x,\xi)\in V\times (V^\perp\setminus 0)\} .
\]
By Proposition \ref{smooth}, $U$ is given by a transverse differential operator $D$, i.e.
\[( U, \varphi) = \int_V D\varphi(v) dv\]
where $D\varphi(v) =\sum_I a_I(v) D_I\varphi(v)$. Now, for any $v_0\in V$,  $X_{v_0}$ commutes with any $D_I$, such that, by integration by parts, 
\[0 = ( U,X_{v_0} \varphi ) = -\int_V \sum_I X_{v_0}a_I(v) D_I\varphi(v) dv\ .
\]
Now fix a $d$-tuple $I$, and check this equality on functions of the form \eqref{tf}. It yields $X_{v_0} a_I = 0$ for any $v_0 \in V$ and hence $a_I $ is a constant.
\end{proof}

\medskip
\noindent
{\bf Remark}. All the results of this section could be formulated  for distributions  supported on a closed submanifold.

\section{An elementary approach to residues at poles of the second type}

In this section, we consider the standard Euclidean space $E=\mathbb R^d$ and denote the distance of two points $x,y\in E$ by $\vert x-y\vert$. Let $\beta$ be a complex number and let
\[l_\beta (x,y) = \vert x-y\vert^\beta\ .
\]
For $\Re \beta$ large enough, the kernel $l_\beta$ is locally integrable on $E \times E$, so that it defines a distribution on $E\times E$. It can be extended meromorphically (as a distribution), having simple poles at the points $\beta = -d-2k, k\in \mathbb N$ (see e.g. \cite{gs}).

Now let $\boldsymbol \beta = (\beta_1,\beta_2,\beta_3)$ be in $\mathbb C^3$. Set, for $x_1,x_2,x_3\in E$
\[l_{\boldsymbol \beta} (x_1,x_2,x_3)= l_{\beta_1}(x_2,x_3)\,l_{\beta_2}(x_3,x_1)\,l_{\beta_3}(x_1,x_2)\ .\]
For $f$ in $\mathcal C_c^\infty(E\times E\times E)$ the integral formula
\begin{equation*}
{\mathcal L}_{\boldsymbol \beta} (f) = \int_{E\times E\times E}\!\!\!\!\!\! \!\!\!\!\!\!f(x_1, x_2,x_3)\,l_{\boldsymbol \beta}(x_1,x_2,x_3)\, dx_1\,dx_2\,dx_3
\end{equation*}
is well defined  for $\Re (\beta_j) (j=1,2,3)$ large enough and can be meromorphically continued in $\mathbb C^3$ as a distribution on $E\times E\times E$. 

\begin{theorem}
The map $\boldsymbol \beta \longmapsto \mathcal L_{\boldsymbol \beta}$ can be meromorphically extended to $\mathbb C^3$, with simple poles along the four families of planes defined by one of the following equations
\begin{equation}\label{eqplane1}
\beta_1 = -d-2k_1,\quad \beta_2 = -d-2 k_2,\quad \beta_3 = -d-2k_3\end{equation}
\begin{equation}\label{eqplane2}
\beta_1+\beta_2+\beta_3 =-2d-2k
\end{equation}
where $k_1,k_2,k_3,k\in \mathbb N$.
\end{theorem}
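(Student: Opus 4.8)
The plan is to reduce everything to local models near the singular locus of the kernel, which is the union of the three pairwise diagonals $\{x_i=x_j\}$, and to treat separately the generic points of these diagonals (where only one factor of $l_{\boldsymbol\beta}$ is singular) and the triple diagonal $\mathcal D=\{x_1=x_2=x_3\}$ (where all three factors degenerate simultaneously). Using a smooth partition of unity on $E\times E\times E$, I first discard the open region where $x_1,x_2,x_3$ are pairwise distinct: there $l_{\boldsymbol\beta}$ is smooth and the integral depends holomorphically on $\boldsymbol\beta\in\mathbb C^3$, contributing no poles. It then remains to analyse two kinds of neighbourhoods.

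First, on a piece supported near a single pairwise diagonal, say $\{x_2=x_3\}$ but bounded away from $\mathcal D$, the factors $|x_3-x_1|^{\beta_2}$ and $|x_1-x_2|^{\beta_3}$ are smooth and entire in $\boldsymbol\beta$, so only $|x_2-x_3|^{\beta_1}$ is singular. Pairing against a test function and carrying out the integration in the difference variable $x_2-x_3$, I reduce by Fubini to the one-variable distribution $|x|^{\beta_1}$ on $\mathbb R^d$ tested against a compactly supported smooth function depending holomorphically on $(\beta_2,\beta_3)$. The classical continuation of $|x|^{\beta}$ (see \cite{gs}) is meromorphic with simple poles exactly at $\beta=-d-2k$, which yields the first family \eqref{eqplane1}, the other two diagonals being handled by the obvious symmetry.

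The substantial case is a neighbourhood of $\mathcal D$. Here I pass to the relative coordinates $x=x_1$, $y=x_2-x_1$, $z=x_3-x_1$, so that $dx_1\,dx_2\,dx_3=dx\,dy\,dz$ and the kernel becomes
\[ K_{\boldsymbol\beta}(y,z)=|y-z|^{\beta_1}\,|z|^{\beta_2}\,|y|^{\beta_3}, \]
a function on $\mathbb R^{2d}$ that is homogeneous of degree $\sigma:=\beta_1+\beta_2+\beta_3$ and \emph{even}, $K_{\boldsymbol\beta}(-y,-z)=K_{\boldsymbol\beta}(y,z)$. Integration in the harmless variable $x$ leaves a smooth compactly supported test function $\phi$ on $\mathbb R^{2d}$, and passing to polar coordinates $(y,z)=r\omega$, $r>0$, $\omega\in S^{2d-1}$, gives
\[ \langle K_{\boldsymbol\beta},\phi\rangle=\int_0^\infty r^{\sigma+2d-1}\Big(\int_{S^{2d-1}}K_{\boldsymbol\beta}(\omega)\,\phi(r\omega)\,d\omega\Big)\,dr. \]
The inner angular integral converges for $\Re\beta_j>-d$ and, by the same one-variable reduction applied to the edge singularities of $K_{\boldsymbol\beta}$ on the sphere (whose transverse dimension is again $d$), continues meromorphically with simple poles only along \eqref{eqplane1}. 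The outer radial integral is a Mellin transform: inserting the Taylor expansion $\phi(r\omega)=\sum_{j\ge 0}\frac{r^j}{j!}(\omega\cdot\nabla)^j\phi(0)+\cdots$, the $j$-th term contributes a simple pole at $\sigma=-2d-j$, but the evenness of $K_{\boldsymbol\beta}$ annihilates the angular integral of every odd-order term, so only $j=2k$ survives and the poles occur precisely at $\sigma=\beta_1+\beta_2+\beta_3=-2d-2k$. This produces \eqref{eqplane2}. Reassembling the local contributions through the partition of unity gives the meromorphic continuation of $\mathcal L_{\boldsymbol\beta}$ to all of $\mathbb C^3$, with poles confined to the four stated families.

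The delicate point, and the main obstacle, is exactly the overlap at $\mathcal D$: there the three edge singularities and the homogeneous (diagonal) singularity are superimposed, so the radial and angular continuations interact. One must verify that the spherical integral is genuinely meromorphic in $\boldsymbol\beta$ with only the first-type poles, and with enough uniformity in $r$ to justify interchanging it with the radial Mellin continuation, and that no pole of order higher than one is created where a plane of \eqref{eqplane1} meets the plane \eqref{eqplane2}. The parity bookkeeping that selects the even shifts is routine once this interaction is brought under control.
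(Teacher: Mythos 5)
Your argument is correct in outline, but it is worth noting that it is not the route the paper takes for this particular theorem: the paper proves the statement by transporting the problem to the sphere via the stereographic projection and quoting the spherical result of Clerc--{\O}rsted \cite{co}, whereas you give a direct flat-space proof. What you propose is, in substance, exactly the ``elementary approach'' that the paper develops immediately \emph{after} stating the theorem in order to compute the residues: your integration in the harmless variable is the function $\Phi$ of \eqref{Phi}, your angular integral is $\mathcal I_{\boldsymbol\beta}$ of Lemma \ref{ibeta} (with the same observation that the three singular sets on $\Sigma$ are disjoint of codimension $d$, so only first-type poles occur there), and your radial Mellin transform with the parity argument killing the odd Taylor terms is the lemma on $I_s(\gamma)$ for even $\gamma$, giving poles only at $\sigma=-2d-2k$. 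The stereographic route buys compactness and lets one import the spherical theorem wholesale; your route is self-contained and has the advantage of producing the residue formulas of Theorems \ref{thres1}--\ref{thres2} along the way. Concerning the ``delicate point'' you flag: it is genuine but standard to resolve. One first continues the angular distribution alone, writing $\mathcal I_{\boldsymbol\beta}(\psi_r)$ locally as a holomorphic function of $\boldsymbol\beta$ (with values in functions of $r$, smooth and even, depending continuously on $r$) divided by a product of linear factors $\prod_j(\beta_j+d+2k_j)$ coming only from the first-type planes; one then applies the one-variable radial continuation to the holomorphic numerator. The resulting local expression is a holomorphic function divided by the product of the first-type factors and the single second-type factor, which is precisely what ``simple poles along each of the four families'' means; at an intersection of two polar planes the singularity is a product of two simple polar factors, not a pole of order two along any one hypersurface. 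The paper sidesteps this interaction entirely by computing second-type residues only at points $\boldsymbol\beta^0$ satisfying $\beta^0_j\notin -d-2\mathbb N$, exactly the genericity hypothesis of Theorem \ref{thres1}.
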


The analoguous result for the sphere was obtained in a joint work of the second author with B. \O rsted \cite{co}, and the proof for the flat case requires only minor modifications. In fact the flat case and the spherical case are related by a stereographic projection already used in \cite{co}.

A pole is said to be \emph{of first type} if it satisfies one of the equations \eqref{eqplane1}, of \emph{second type} if it satisfies one of the equations \eqref{eqplane2}.

For poles of the first type,  the residues  were studied  in \cite{c}. We concentrate on poles of the second type. For each $k\in \mathbb N$, let $\mathcal H_k$ be the plane  defined by
\[ \mathcal H_k = \{ \boldsymbol \beta \in \mathbb C^3 \ ;\  \beta_1+\beta_2+\beta_3 = -2d -2k\}\ .\]

 In \cite{co} was already proven that, at a \emph{generic} point (see precise statement below) of such a plane,  the residue (viewed as a distribution on $E\times E\times E$) is supported on the diagonal 
\[\mathcal D= \{ (x,x,x)\vert x\in E\}\ .
\]
This will be reproved, in a more elementary and explicit way (see Theorem \ref{thres2}).

The distribution $\mathcal L_{\boldsymbol \beta}$ is invariant by diagonal translations, i.e. by mappings $t_v, v\in E$, where
\[t_v(x_1,x_2,x_3) = (x_1+v,x_2+v, x_3+v)\ .
\]
Clearly the residue at some pole $\boldsymbol \beta^0$ will have the same invariance. So, for $\boldsymbol \beta^0$ a pole of the second type, we are in the situation of Proposition \ref{transinv}. One has to choose a normal space to $\mathcal D$ in $E\times E\times E$, and our choice will be\begin{footnote}{As symmetry among the three variables is broken, from  now on, we use $(x,y,z)$ for notation of an element in $E\times E\times E$.}\end{footnote}
\[W= \{(0,y,z), y\in E, z\in E\}\ .
\]
The concept of transverse differential operator introduced in the previous section can be reinterpreted in this context. A \emph{bidifferential operator} $D$ is a map from smooth functions on $E\times E$ to smooth functions on $E$ of the form
\[D\varphi(v) = \sum_{I,J} a_{IJ}(v) \frac {\partial^{\vert I \vert + \vert J\vert} \varphi}{\partial y_I \partial z_J} (v,v)\ ,
\]
where $I$ and $J$ are $d$-tuples of integers, $y_j$ (resp $z_j$) are coordinates on the first factor (resp. second factor), associated to a choice of a basis of $E$ and the $a_{IJ}$'s are smooth functions on $E$. The sum is assumed to be locally finite. When the $a_{IJ}$ are constant functions (hence all $0$ except for a finite number), the operator is said to be with constant coefficients. 

\begin{theorem}\label{resbi}
 Let $\boldsymbol \beta^0\in \mathcal H_k$ for some $k\in \mathbb N$, but such that none of the equations \eqref{eqplane1} is satisfied. Then there exists a unique bidifferential operator with constant coefficients $D_{\boldsymbol \beta^0}$ such that, for $f$ in $\mathcal C^\infty_c(E)$ and $g\in \mathcal C^\infty_c(E\times E)$
\begin{equation}\label{bidiffres}
Res(\mathcal L_{\boldsymbol \beta}, \beta^0) (f\otimes g) = \int_E f(x) D_{\boldsymbol \beta^0} g(x) dx\ .
\end{equation}
\end{theorem}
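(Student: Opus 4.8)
The plan is to identify the residue $U:=\mathrm{Res}(\mathcal L_{\boldsymbol\beta},\boldsymbol\beta^0)$, viewed as a distribution on $E\times E\times E$, as an instance of Proposition~\ref{transinv} relative to the diagonal $\mathcal D$ and the chosen normal subspace $W=\{(0,y,z)\}$, and then to transcribe the resulting constant--coefficient transverse differential operator into the bidifferential language. Concretely I would verify the two hypotheses of Proposition~\ref{transinv} for $U$ (support on $\mathcal D$, invariance under translations by $\mathcal D$), invoke that proposition, and finally restrict the resulting formula to product test functions $f\otimes g$.

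The first and essential step is to show $\mathrm{Supp}(U)\subset\mathcal D$. Let $P=(x_1^0,x_2^0,x_3^0)\notin\mathcal D$. If the three coordinates are pairwise distinct, all three factors of $l_{\boldsymbol\beta}$ are smooth and nonvanishing near $P$, so the localization of $\mathcal L_{\boldsymbol\beta}$ near $P$ is entire in $\boldsymbol\beta$. If exactly two coincide, say $x_1^0=x_2^0\neq x_3^0$, then near $P$ only the factor $\vert x_1-x_2\vert^{\beta_3}$ is singular, the other two remaining smooth and holomorphic in $(\beta_1,\beta_2)$; the distributional continuation of $\vert x_1-x_2\vert^{\beta_3}$ in the $d$-dimensional variable $x_1-x_2$ has poles only at $\beta_3\in -d-2\mathbb N$, i.e.\ along the planes \eqref{eqplane1}. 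Since $\boldsymbol\beta^0$ is assumed to avoid \eqref{eqplane1}, the localization of $\mathcal L_{\boldsymbol\beta}$ near $P$ is holomorphic at $\boldsymbol\beta^0$, so its Laurent expansion transverse to $\mathcal H_k$ has no principal part and $U$ vanishes near $P$. Hence $\mathrm{Supp}(U)\subset\mathcal D$; this recovers the support statement of \cite{co}. I expect this to be the main obstacle, in the sense that it is the only genuinely analytic input — everything afterwards is formal once $U$ is known to live on the diagonal.

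Next, $\mathcal L_{\boldsymbol\beta}$ is invariant under the diagonal translations $t_v$ wherever it is defined, and since extracting the residue along $\mathcal H_k$ is a linear operation commuting with the fixed pullbacks $t_v^*$, the distribution $U$ inherits this invariance. Under the identification $\mathcal D\cong E$, $(x,x,x)\leftrightarrow x$, the $t_v$ are exactly the translations by elements of $\mathcal D$, so $U$ is invariant under translations by $\mathcal D$. We are thus in the situation of Proposition~\ref{transinv} with $V=\mathcal D$, which provides a transverse differential operator $D$ \emph{with constant coefficients}, using normal derivatives in the $W$-directions, such that $(U,\varphi)=\int_{\mathcal D}D\varphi\,dv$.

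Finally I would transcribe this. Writing a point of $E^3$ as $(v,\,v+w_1,\,v+w_2)$ with $v\in\mathcal D$ and $(0,w_1,w_2)\in W$, the normal derivatives are the operators $\partial_{w_1}^{I}\partial_{w_2}^{J}$ evaluated at $w_1=w_2=0$, and for $\varphi=f\otimes g$, i.e.\ $\varphi(x_1,x_2,x_3)=f(x_1)g(x_2,x_3)$, one computes $\partial_{w_1}^{I}\partial_{w_2}^{J}\varphi\big|_{w=0}=f(v)\,\frac{\partial^{\vert I\vert+\vert J\vert}g}{\partial y_I\,\partial z_J}(v,v)$, the factor $f(v)$ coming out because $x_1=v$ is held fixed. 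Therefore $\int_{\mathcal D}D\varphi\,dv=\int_E f(v)\big(\sum_{I,J}a_{IJ}\,\frac{\partial^{\vert I\vert+\vert J\vert}g}{\partial y_I\,\partial z_J}(v,v)\big)\,dv$, which is precisely \eqref{bidiffres} with $D_{\boldsymbol\beta^0}g(v)=\sum_{I,J}a_{IJ}\,\frac{\partial^{\vert I\vert+\vert J\vert}g}{\partial y_I\,\partial z_J}(v,v)$; the constant coefficients of $D$ become the constant coefficients $a_{IJ}$. Uniqueness follows from the uniqueness of the transverse operator in Proposition~\ref{transinv}: if two such $D_{\boldsymbol\beta^0}$ gave the same functional then $\int_E f(v)\,(D_{\boldsymbol\beta^0}g-D'_{\boldsymbol\beta^0}g)(v)\,dv=0$ for all $f$ forces $D_{\boldsymbol\beta^0}g=D'_{\boldsymbol\beta^0}g$ for every $g$, and testing against suitable polynomial $g$ separates the coefficients $a_{IJ}$.
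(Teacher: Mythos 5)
Your proposal is correct and follows essentially the same route as the paper: the authors likewise obtain Theorem \ref{resbi} by combining the support statement $\mathrm{Supp}(U)\subset\mathcal D$ (quoted from \cite{co} and reproved later via the polar-coordinate computation of Theorem \ref{thres1}) with the diagonal translation invariance of the residue and Proposition \ref{transinv}, then restricting to test functions $f\otimes g$. Your self-contained localization argument for the support is a sound substitute for the citation, and the rest matches the paper's reasoning.
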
 

Here $f\otimes g$ stands for the function \[E\times E\times E\ni(x,y,z)\longmapsto f(x)g(x,y)\ .\]
Needless to say, as those functions are dense in $\mathcal C_c^\infty(E\times E\times E)$,  \eqref{bidiffres} determines completely the residue.

As already observed, the distribution $\mathcal L_{\boldsymbol \beta}$ is invariant by any diagonal translation. To take adavantage of this, define for $\varphi\in \mathcal C^\infty_c(E\times E\times E)$
\begin{equation}\label{Phi}
\Phi(y,z) = \int_E \varphi(v, y+v,z+v)\, dv \ .
\end{equation}
For $\varphi$ in $ \mathcal C^\infty_c(E\times E \times E)$, the integral converges and defines a function $\Phi$ which belongs to $ \mathcal C^\infty_c(E\times E)$. Moreover, the correspondance $\varphi \longmapsto \Phi$ is continuous. Notice for further reference that
\begin{equation}\label{Phi0}
\Phi(0,0) = \int_E \varphi(x,x,x) \,dx\ .
\end{equation}

\begin{lemma}
Assume that $\Re(\beta_j)>-d, j=1,2,3$ and $\Re(\beta_1+\beta_2+\beta_3) > -2d$. Then, for any $\varphi\in \mathcal C^\infty_c(E\times E\times E)$
\[\mathcal L_{\boldsymbol \beta} (\varphi) = \int_{E\times E} \vert y\vert ^{\beta_3}\vert z\vert^ {\beta_2}\vert y-z\vert^{\beta_1}\Phi(y,z)\,dy\,dz\ .
\]
\end{lemma}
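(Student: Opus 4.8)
The plan is to reduce the identity to Fubini's theorem after the linear change of variables that adapts the integral to the diagonal invariance. Concretely, I would substitute
\[
v = x_1, \qquad y = x_2 - x_1, \qquad z = x_3 - x_1,
\]
a shear whose Jacobian equals $1$ and which carries the compact support of $\varphi$ to a compact set in the $(v,y,z)$ variables. Under this substitution the three difference vectors become
\[
x_1 - x_2 = -y, \qquad x_3 - x_1 = z, \qquad x_2 - x_3 = y - z,
\]
so that $l_{\boldsymbol \beta}(x_1,x_2,x_3) = |y|^{\beta_3}|z|^{\beta_2}|y-z|^{\beta_1}$, which crucially does \emph{not} depend on $v$. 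Integrating first in $v$ and recognizing the inner integral as $\Phi(y,z)$ from \eqref{Phi} then yields the asserted formula --- provided the manipulation is legitimate.

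The only real point is therefore to justify absolute convergence throughout the stated range, which simultaneously validates the change of variables and the use of Fubini. Since $\varphi$ is compactly supported, it suffices to show that
\[
F(y,z) = |y|^{\Re\beta_3}\,|z|^{\Re\beta_2}\,|y-z|^{\Re\beta_1}
\]
is locally integrable on $E\times E = \mathbb R^{2d}$. As $F$ is homogeneous of degree $A := \Re(\beta_1+\beta_2+\beta_3)$, I would pass to polar coordinates $(y,z) = r\omega$ with $r = |(y,z)|$ and $\omega \in S^{2d-1}$, which gives
\[
\int_{|(y,z)|\le R} F = \left(\int_{S^{2d-1}} F(\omega)\,d\omega\right)\int_0^R r^{A + 2d - 1}\,dr .
\]
The radial integral converges exactly when $A + 2d - 1 > -1$, that is, when $\Re(\beta_1+\beta_2+\beta_3) > -2d$.

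For the angular factor the key observation is that the three singular loci $\{y=0\}$, $\{z=0\}$ and $\{y=z\}$ meet only at the origin, so upon intersecting with $S^{2d-1}$ they become three \emph{pairwise disjoint} closed submanifolds, each of codimension $d$. Near each of them only a single factor of $F$ is singular, behaving like the distance to a codimension-$d$ submanifold raised to the corresponding power; hence $F$ is integrable over the sphere precisely when $\Re\beta_j > -d$ for each $j$. The conjunction of the radial and angular conditions is exactly the hypothesis of the lemma, and this disentangling of the three coalescing singularities at the diagonal point $y=z=0$ is the step demanding the most care. Once absolute integrability is secured, Fubini applies and the computation above delivers the claim; the same estimate incidentally shows that the right-hand side, with $\Phi \in \mathcal C^\infty_c(E\times E)$, converges in the same range.
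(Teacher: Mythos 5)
Your proof is correct and follows exactly the paper's route: the same shear $v=x_1$, $y=x_2-x_1$, $z=x_3-x_1$ (unit Jacobian, kernel independent of $v$), with Fubini justified by local integrability of the kernel in $(y,z)$. The convergence analysis you spell out (homogeneity plus the three pairwise disjoint codimension-$d$ singular loci on the sphere) is precisely what the paper treats as routine here and then records explicitly in the subsequent lemma on $\mathcal I_{\boldsymbol\beta}$.
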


\begin{proof} The conditions on the parameter $\boldsymbol \beta$ guarantee the convergence of the integrals. The equality of the integrals is obtained through the affine change of variables
\[v=x_1,\quad  y = x_2-x_1,\quad z = x_3-x_1\ .
\]
\end{proof}
Let 
\[\Sigma = \{ (\sigma, \tau) \in E \times E, \  \vert \sigma\vert^2+\vert \tau\vert^2 = 1\} \]
be the unit sphere in $E\times E$, and denote by $d\mu$ the Lebesgue measure on $\Sigma$. Recall the integration formula in polar coordinates
\begin{equation}
\int_{E\times E} \Phi(y,z) \,dy\, dz =\int_0^\infty \int_\Sigma \Phi(r\sigma, r\tau) d\mu(\sigma, \tau)\, r^{2d-1}\, dr
\end{equation}
\begin{lemma}\label{ibeta} 
For $\psi$ in $\mathcal C^\infty_c(\Sigma)$ let
\begin{equation}\label{psi}
\mathcal I_{\boldsymbol \beta}(\psi) =\int_\Sigma \vert \sigma\vert^{\beta_3} \vert \tau\vert^{\beta_2} \vert \sigma-\tau\vert^{\beta_1} \psi(\sigma, \tau)\, d\mu(\sigma ,\tau) \ .
\end{equation}

$i)$ Assume that  $\Re(\beta_j)>-d$ for $j=1,2,3$. Then the integral \eqref{psi} is convergent and defines a distribution $\mathcal I_{\boldsymbol \beta}$. 

$ii)$ The map $\boldsymbol \beta\longmapsto \mathcal I_{\boldsymbol \beta}$ can be extended meromorphically to $\mathbb C^3$, with simple poles along the family of planes given by the following equations :
\[  \beta_j = -d-2k_j, \quad j=1,2,3,\  k_j\in \mathbb N\ .
\]
\end{lemma}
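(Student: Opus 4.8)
The plan is to localize the integral near the three loci where the integrand is singular and to reduce each local contribution to the classical meromorphic family $|x|^\lambda$ on $\mathbb R^d$, whose pole structure is recorded in \cite{gs}. The three factors $|\sigma|,|\tau|,|\sigma-\tau|$ vanish respectively on
\[
S_3=\{\sigma=0\},\qquad S_2=\{\tau=0\},\qquad S_1=\{\sigma=\tau\}.
\]
On $\Sigma$ these are \emph{pairwise disjoint}: the point $\sigma=\tau=0$ is excluded by the constraint $|\sigma|^2+|\tau|^2=1$, and on $S_1$ one has $|\sigma|=|\tau|=1/\sqrt2\neq 0$. Each $S_j$ is a smooth submanifold of $\Sigma$ of codimension $d$. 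Part $i)$ then follows by a local estimate: near $S_3$, say, the factors $|\tau|^{\beta_2}$ and $|\sigma-\tau|^{\beta_1}$ are smooth and bounded, so local integrability is governed by $|\sigma|^{\beta_3}$ alone, and integrating in $d$-dimensional polar coordinates transverse to $S_3$ gives convergence exactly when $\Re\beta_3>-d$; the analogous computations near $S_2$ and $S_1$ yield the conditions $\Re\beta_2>-d$ and $\Re\beta_1>-d$.

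For part $ii)$ I would fix a smooth partition of unity $\psi=\psi_0+\psi_1+\psi_2+\psi_3$ on $\Sigma$, with $\psi_j$ ($j=1,2,3$) supported in a small neighbourhood of $S_j$ meeting neither of the other two singular sets, and $\psi_0$ supported away from all three. The contribution of $\psi_0$ is the integral of a function that is smooth in $(\sigma,\tau)$ and entire in $\boldsymbol\beta$, hence defines an entire function of $\boldsymbol\beta$. For the piece carried by $\psi_3$ I would straighten $S_3$ by the chart $(\sigma,\omega)\mapsto(\sigma,\sqrt{1-|\sigma|^2}\,\omega)$, with $\sigma\in E$ small and $\omega\in S^{d-1}$ the direction of $\tau$. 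In this chart $|\sigma|$ is literally the Euclidean norm of the transverse variable, while $|\tau|^{\beta_2}=(1-|\sigma|^2)^{\beta_2/2}$ and $|\sigma-\tau|^{\beta_1}$ are smooth near $\sigma=0$; absorbing these together with $\psi_3$, the measure on $S^{d-1}$, and the Jacobian of $d\mu$ into a single amplitude $G\in\mathcal C^\infty_c(E)$, the contribution becomes
\[
\int_E |\sigma|^{\beta_3}\,G(\sigma)\,d\sigma .
\]
By the classical continuation of $|x|^\lambda$ on $\mathbb R^d$ this extends meromorphically in $\beta_3$ with at most simple poles, located exactly at $\beta_3=-d-2k_3$, $k_3\in\mathbb N$; the step of $2$ rather than $1$ is the familiar consequence of the evenness in the radial variable of the spherical mean of $G$.

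The same straightening applied near $S_2$ (transverse variable $\tau$) and near $S_1$ (transverse variable $\eta=\sigma-\tau$, after the orthogonal change to midpoint $m=\tfrac12(\sigma+\tau)$ and difference $\eta$, under which the constraint reads $2|m|^2+\tfrac12|\eta|^2=1$) yields contributions with at most simple poles on $\beta_2=-d-2k_2$ and $\beta_1=-d-2k_1$ respectively. Adding the four pieces proves the meromorphic continuation; since each singular piece involves only one of the variables $\beta_j$ and the regular piece is entire, the poles occur only on the announced hyperplanes and are simple. In particular no pole of the form $\beta_1+\beta_2+\beta_3=\mathrm{const}$ can arise, precisely because $S_1,S_2,S_3$ are disjoint on $\Sigma$ and the three transverse singularities are never felt simultaneously. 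The one point to verify carefully is the straightening claim itself: that in each chart the relevant factor $|\cdot|$ coincides, up to a smooth nonvanishing factor, with the transverse Euclidean norm, and that the remaining two factors together with the density $d\mu$ contribute only a smooth, compactly supported amplitude. Once this is checked, the reduction to the one-variable continuation of $|x|^\lambda$ on $\mathbb R^d$ is immediate.
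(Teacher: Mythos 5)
Your proposal is correct and follows essentially the same route as the paper's proof: both rest on the observation that the three singular loci $\{\sigma=0\}$, $\{\tau=0\}$, $\{\sigma=\tau\}$ are pairwise disjoint codimension-$d$ submanifolds of $\Sigma$, use a partition of unity to isolate them, and reduce each local piece to the classical meromorphic continuation of $\vert x\vert^\lambda$ on $\mathbb R^d$ from Gelfand--Shilov. You simply supply the local charts and the evenness argument for the $2\mathbb N$ spacing of the poles, which the paper leaves implicit.
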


\begin{proof}
The three subsets of $\Sigma$
\[\{ (\sigma, \tau)\in \Sigma, \sigma = 0\},\quad\{(\sigma, \tau)\in \Sigma, \tau = 0\}, \quad \{ (\sigma, \tau) \in \Sigma, \sigma = \tau\}
\]
are three \emph{disjoint} submanifolds of dimension $d-1$ (hence of codimension 
 $d$) in $\Sigma$. Recalling that we assumed  $\Re(\beta_j) > -d$, for $j=1,2,3$, the integrals
 \[\int_\Sigma \vert \sigma\vert^{\beta_3} d\mu(\sigma, \tau), \quad \int_\Sigma \vert \tau\vert^{\beta_2} d\mu(\sigma, \tau), \quad \int_\Sigma \vert \sigma-\tau\vert^{\beta_1} d\mu(\sigma, \tau)
  \]
 are convergent and hence the integral $\mathcal I_{\boldsymbol \beta}$ is convergent by applying a suitable argument involving a partition of unity. This shows $i)$. Similarly, the meromorphic extension and the location of poles (also the fact that the poles are simple) are classical and can be easily deduced from \cite{gs}.
\end{proof}
Let $\Phi$ be a function in $\mathcal C^\infty_c(E\times E)$. For $r$ in $\mathbb R$, let $\psi_r$ be the function on $\Sigma$ defined by
\begin{equation}\label{psir}
\psi_r(\sigma, \tau) = \Phi(r\sigma, r\tau), \quad (\sigma, \tau)\in \Sigma\ .
\end{equation}
Then $\psi_r$ belongs to $\mathcal C^\infty(\Sigma)$ and the map $(r,\Phi)\longmapsto \psi_r$ is continuous from $\mathbb R\times \mathcal C^\infty_c(E\times E)$ to $\mathcal C^\infty(\Sigma)$.

\begin{lemma} Assume that  $\Re(\beta_j)>-d, j=1,2,3$ and $\Re(\beta_1+\beta_2+\beta_3) > -2d$. Then, for any $\varphi\in \mathcal C^\infty_c(E\times E\times E)$
\begin{equation}\label{radint}
{ \mathcal L}_{\boldsymbol \beta}(\varphi) = \int_0^\infty r^{2d-1+\beta_1+\beta_2+\beta_3}\, \mathcal I_{\boldsymbol \beta} \psi_r \,dr\ .
\end{equation}
\end{lemma}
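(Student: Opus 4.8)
The plan is to combine the two preceding lemmas with the polar-coordinates integration formula and the homogeneity of the kernel. First I would start from the expression
\[
\mathcal L_{\boldsymbol \beta}(\varphi) = \int_{E\times E} |y|^{\beta_3}|z|^{\beta_2}|y-z|^{\beta_1}\,\Phi(y,z)\,dy\,dz,
\]
valid under the stated hypotheses by the change-of-variables lemma proved above, where $\Phi$ is defined by \eqref{Phi}. Passing to polar coordinates $(y,z)=(r\sigma,r\tau)$ with $(\sigma,\tau)\in\Sigma$ and $r>0$, the displayed polar integration formula turns this into a double integral over $(0,\infty)\times\Sigma$ carrying the volume element $r^{2d-1}\,dr\,d\mu(\sigma,\tau)$.

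Next I would exploit the homogeneity of the three factors: since $|r\sigma|^{\beta_3}=r^{\beta_3}|\sigma|^{\beta_3}$, and likewise for the other two factors, the kernel scales as $r^{\beta_1+\beta_2+\beta_3}$ times the corresponding kernel on $\Sigma$. Recalling from \eqref{psir} that $\Phi(r\sigma,r\tau)=\psi_r(\sigma,\tau)$, the inner integral over $\Sigma$ is then \emph{exactly} $\mathcal I_{\boldsymbol\beta}(\psi_r)$ as defined in \eqref{psi}. Collecting the powers of $r$ coming from the scaling of the kernel together with the Jacobian factor $r^{2d-1}$ produces the exponent $2d-1+\beta_1+\beta_2+\beta_3$ and hence the claimed identity \eqref{radint}.

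The only real point requiring care is the justification of these manipulations — Fubini's theorem and the change to polar coordinates — in the presence of the singularities of the kernel along $\{\sigma=0\}$, $\{\tau=0\}$ and $\{\sigma=\tau\}$. This reduces to checking absolute convergence of the iterated integral. The conditions $\Re(\beta_j)>-d$ guarantee, by Lemma \ref{ibeta} $i)$, that $\mathcal I_{\boldsymbol\beta}(|\psi_r|)$ is finite for each $r$, and this bound is locally uniform in $r$ because the map $(r,\Phi)\mapsto\psi_r$ is continuous into $\mathcal C^\infty(\Sigma)$. The remaining radial integral $\int_0^\infty r^{2d-1+\Re(\beta_1+\beta_2+\beta_3)}\,\mathcal I_{\boldsymbol\beta}(|\psi_r|)\,dr$ then converges near $r=0$ precisely because $\Re(\beta_1+\beta_2+\beta_3)>-2d$ makes the radial exponent strictly greater than $-1$, and near $r=\infty$ because $\Phi$ has compact support, so that $\psi_r\equiv 0$ for all sufficiently large $r$. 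With absolute convergence in hand, Fubini legitimizes both the splitting into a radial and a spherical integration and the factoring out of the powers of $r$; the main obstacle is therefore not any hard estimate but rather the careful bookkeeping showing that each hypothesis on $\boldsymbol\beta$ is exactly what is needed to control one of the three singular loci and the behaviour at the origin.
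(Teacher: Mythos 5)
Your proof is correct and follows exactly the route the paper intends: the paper's own justification is the single sentence ``This is just using the formula for integration in polar coordinates,'' i.e.\ the previous lemma plus the polar-coordinate formula plus homogeneity of the kernel. Your additional care about absolute convergence and Fubini (using $\Re(\beta_j)>-d$ for the spherical singularities and $\Re(\beta_1+\beta_2+\beta_3)>-2d$ for the radial integral at $r=0$) simply makes explicit what the paper leaves implicit.
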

 This is just using the formula for integration in polar coordinates.
 
 \begin{lemma} Let $\gamma$ be in $\mathcal C^\infty_c(\mathbb R)$ and assume that $\gamma$ is an \emph{even} function. Then the integral
 \[I_s(\gamma)= \int_0^\infty r^{s} \gamma(r) dr = \frac{1}{2} \int_{-\infty}^{+\infty} \vert r\vert^s \gamma(r) dr
 \] is convergent for $\Re s >-1$.  The map $s\longmapsto I_s(\gamma)$ can be extended meromorphically to $\mathbb C$ with simple poles at $s=-1-2k, k\in \mathbb N$. Moreover, the residues at the poles are given by
 \begin{equation}\label{ress}
 Res(I_s(\gamma), -1-2k) = \frac{1}{\Gamma(2k+1)}{\big(\frac{d}{dr}\big)}^{2k}\!\!\!\gamma\ (0)\ .
 \end{equation}
\end{lemma}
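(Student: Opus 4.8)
The plan is to localise the only source of trouble, namely the behaviour of the integrand near $r=0$, and to regularise it by subtracting a Taylor polynomial of $\gamma$, exactly in the spirit of Gelfand and Shilov. First, the two expressions for $I_s(\gamma)$ agree for trivial reasons: substituting $r\mapsto -r$ on the negative half-line and using $\gamma(-r)=\gamma(r)$ together with $\vert -r\vert^s=\vert r\vert^s$ gives $\tfrac12\int_{-\infty}^{+\infty}\vert r\vert^s\gamma(r)\,dr=\int_0^\infty r^s\gamma(r)\,dr$, and the integral converges for $\Re s>-1$ since $\gamma$ is smooth with compact support. For the continuation, I split $I_s(\gamma)=\int_0^1 r^s\gamma(r)\,dr+\int_1^\infty r^s\gamma(r)\,dr$. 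The second piece is an integral over a compact set bounded away from $r=0$ (recall $\gamma$ has compact support), hence defines an \emph{entire} function of $s$; all the poles live in the first piece.

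For the first piece, fix an integer $M$ and expand $\gamma$ by Taylor's formula at the origin. Because $\gamma$ is \emph{even}, all odd-order derivatives vanish at $0$, so
\[
\gamma(r)=\sum_{m=0}^{M}\frac{\gamma^{(2m)}(0)}{(2m)!}\,r^{2m}+R_M(r),\qquad R_M(r)=O(r^{2M+2})\ \ (r\to 0)\ .
\]
Inserting this and integrating the explicit monomials yields
\[
\int_0^1 r^s\gamma(r)\,dr=\sum_{m=0}^{M}\frac{\gamma^{(2m)}(0)}{(2m)!}\,\frac{1}{s+2m+1}+\int_0^1 r^s R_M(r)\,dr\ .
\]
The remainder integral converges absolutely and (by differentiation under the integral sign) is holomorphic for $\Re s>-2M-3$, while each explicit term is meromorphic with a single simple pole at $s=-1-2m$.

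Assembling the pieces, $I_s(\gamma)$ extends holomorphically to the half-plane $\Re s>-2M-3$ except for simple poles at $s=-1-2m$, $0\le m\le M$; letting $M\to\infty$ produces the meromorphic continuation to all of $\mathbb C$, with poles precisely at $s=-1-2k$, $k\in\mathbb N$. The residue at $s=-1-2k$ comes solely from the term $\frac{\gamma^{(2k)}(0)}{(2k)!}\,\frac{1}{s+2k+1}$, and therefore equals
\[
Res(I_s(\gamma),-1-2k)=\frac{\gamma^{(2k)}(0)}{(2k)!}=\frac{1}{\Gamma(2k+1)}\,\Bigl(\frac{d}{dr}\Bigr)^{2k}\!\gamma(0)\ ,
\]
which is exactly \eqref{ress}.

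There is no genuine obstacle here; the computation is classical. The one point deserving care, and the only place the hypothesis enters, is the evenness of $\gamma$: it is precisely the vanishing of the odd Taylor coefficients that suppresses the candidate poles at the even negative integers $s=-2-2k$ and leaves only the odd ones $s=-1-2k$. Without evenness the same argument would instead yield simple poles at \emph{every} negative integer $s=-1-m$, $m\in\mathbb N$.
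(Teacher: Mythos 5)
Your proof is correct and is precisely the classical Gelfand--Shilov regularization argument (split at $r=1$, Taylor-expand at the origin, integrate the monomials explicitly, observe the remainder is holomorphic) that the paper itself does not spell out but simply defers to by citing \cite{gs}. The one delicate point --- that evenness kills the odd Taylor coefficients and hence the would-be poles at $s=-2-2k$, and improves the remainder bound to $O(r^{2M+2})$ --- is handled correctly.
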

 For a proof, see \cite{gs}.

\begin{theorem}\label{thres1}
 Let $k\in \mathbb N$ and let $\boldsymbol \beta^0 = (\beta^0_1,\beta^0_2, \beta^0_3)$ satisfy the following assumptions :
\medskip

$i)$ $\beta^0_1+\beta^0_2+\beta^0_3  = -2d -2k$
\medskip

$ii)$ $\beta^0_j\notin -d-2\mathbb N$.
\medskip

Let $\varphi$ be a function in $\mathcal C^\infty_c(E\times E\times E)$ and form successively the functions $\Phi$ defined by \eqref{Phi} and $\psi_r$ defined by \eqref{psir}. The 
function $\boldsymbol \beta \longmapsto \mathcal L_{\boldsymbol \beta}(\varphi)$ has a residue at $\boldsymbol \beta^0$ given by
\begin{equation}\label{resg}
Res(\mathcal L_{\boldsymbol \beta}(\varphi), \boldsymbol \beta^0)=
\frac{1}{\Gamma(2k+1)} {\big(\frac{d}{dr}\big)}^{2k}_{\vert r=0}\,{ \mathcal{I}}_{\boldsymbol \beta^0}(\psi_r)\ .
\end{equation}
\end{theorem}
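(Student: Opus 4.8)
The plan is to reduce the three-variable residue to the one-dimensional residue formula \eqref{ress} by means of the radial integration formula \eqref{radint}. Introduce the single variable $s=2d-1+\beta_1+\beta_2+\beta_3$ and set $\gamma_{\boldsymbol\beta}(r)=\mathcal I_{\boldsymbol\beta}(\psi_r)$, so that \eqref{radint} reads $\mathcal L_{\boldsymbol\beta}(\varphi)=\int_0^\infty r^s\gamma_{\boldsymbol\beta}(r)\,dr$ on the domain $R=\{\Re\beta_j>-d,\ \Re(\beta_1+\beta_2+\beta_3)>-2d\}$. Because $\Phi$ has compact support while $(\sigma,\tau)$ ranges over the unit sphere $\Sigma$, the function $r\mapsto\gamma_{\boldsymbol\beta}(r)$ is supported in a fixed interval $[-R_0,R_0]$. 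Since $r\mapsto\psi_r$ is smooth into $\mathcal C^\infty(\Sigma)$ and $\mathcal I_{\boldsymbol\beta^0}$ is a continuous linear form on $\mathcal C^\infty(\Sigma)$ (Lemma \ref{ibeta} together with hypothesis $ii)$ guarantee that $\mathcal I_{\boldsymbol\beta}$ is holomorphic, hence well defined, in a neighbourhood $\Omega$ of $\boldsymbol\beta^0$), the function $r\mapsto\gamma_{\boldsymbol\beta}(r)$ is smooth, with $\frac{d^m}{dr^m}\gamma_{\boldsymbol\beta}(r)=\mathcal I_{\boldsymbol\beta}\big(\frac{d^m}{dr^m}\psi_r\big)$. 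Finally it is even in $r$: replacing $(\sigma,\tau)$ by the antipode $(-\sigma,-\tau)$ leaves $\Sigma$, $d\mu$ and the kernel $|\sigma|^{\beta_3}|\tau|^{\beta_2}|\sigma-\tau|^{\beta_1}$ invariant and carries $\psi_{-r}$ to $\psi_r$, so $\gamma_{\boldsymbol\beta}(-r)=\gamma_{\boldsymbol\beta}(r)$, first for $\Re\beta_j>-d$ and then on $\Omega$ by uniqueness of the continuation. Thus the even-function hypothesis of \eqref{ress} is met.

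The main obstacle is that the naive domain of convergence of $\int_0^\infty r^s\gamma_{\boldsymbol\beta}(r)\,dr$ requires $\Re(\beta_1+\beta_2+\beta_3)>-2d$ and therefore never reaches $\boldsymbol\beta^0$, where this sum equals $-2d-2k$; one must verify that the meromorphic continuation furnished by \eqref{ress} genuinely computes the continuation of $\mathcal L_{\boldsymbol\beta}(\varphi)$. To handle this I would define, for $\boldsymbol\beta$ off the first-type planes \eqref{eqplane1} (where $\mathcal I_{\boldsymbol\beta}$ is holomorphic by Lemma \ref{ibeta}), the function $F(\boldsymbol\beta)$ by the Gelfand--Shilov splitting
\[
F(\boldsymbol\beta)=\int_0^1 r^s\Big(\gamma_{\boldsymbol\beta}(r)-\sum_{j=0}^{k}\tfrac{r^{2j}}{(2j)!}\gamma_{\boldsymbol\beta}^{(2j)}(0)\Big)dr+\sum_{j=0}^{k}\frac{\gamma_{\boldsymbol\beta}^{(2j)}(0)}{(2j)!\,(s+1+2j)}+\int_1^\infty r^s\gamma_{\boldsymbol\beta}(r)\,dr,
\]
each term being meromorphic in $\boldsymbol\beta$ with poles only along the second-type planes \eqref{eqplane2}, and, on letting the order of Taylor subtraction grow, $F$ extends to a meromorphic function on all of $\mathbb C^3\setminus\{\text{first-type planes}\}$. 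On the overlap $R$ one has $F=\mathcal L_{\boldsymbol\beta}(\varphi)$ by \eqref{radint}, and $R$ is a nonempty open subset of the connected manifold $\mathbb C^3\setminus\{\text{first-type planes}\}$, on which both $F$ and $\mathcal L_{\boldsymbol\beta}(\varphi)$ (whose poles off the first-type planes are, by the first theorem of this section, confined to the second-type planes) are meromorphic. The identity theorem then forces $F=\mathcal L_{\boldsymbol\beta}(\varphi)$ throughout this domain, in particular near $\boldsymbol\beta^0$.

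It then remains to read off the residue. Near $\boldsymbol\beta^0$ the only pole of $F$ as $s\to -1-2k$ comes from the single term $\gamma_{\boldsymbol\beta}^{(2k)}(0)/\big((2k)!\,(s+1+2k)\big)$, so the pole is simple and, by \eqref{ress} applied to $\gamma_{\boldsymbol\beta^0}$,
\[
Res\big(F,\,s=-1-2k\big)=\frac{1}{\Gamma(2k+1)}\Big(\frac{d}{dr}\Big)^{2k}_{|r=0}\gamma_{\boldsymbol\beta^0}(r)=\frac{1}{\Gamma(2k+1)}\Big(\frac{d}{dr}\Big)^{2k}_{|r=0}\mathcal I_{\boldsymbol\beta^0}(\psi_r).
\]
Finally I would reconcile the residue conventions: since $s=2d-1+\beta_1+\beta_2+\beta_3$, the increment $ds$ equals the increment $d(\beta_1+\beta_2+\beta_3)$ transverse to $\mathcal H_k$, whence the residue of $\mathcal L_{\boldsymbol\beta}(\varphi)$ along $\mathcal H_k$ at $\boldsymbol\beta^0$ coincides with $Res(F,\,s=-1-2k)$, yielding \eqref{resg}.
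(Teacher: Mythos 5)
Your proof is correct and follows essentially the same route as the paper's: reduce to the one-dimensional integral $I_s(\gamma)$ with $\gamma(r)=\mathcal I_{\boldsymbol\beta}(\psi_r)$ via the polar-coordinate formula \eqref{radint}, check that $\gamma$ is smooth, compactly supported and even, and apply \eqref{ress} at $s=2d-1+\beta^0_1+\beta^0_2+\beta^0_3=-1-2k$. The only difference is that you make explicit (via the Taylor-subtraction splitting and the identity theorem) that the continuation of $I_s(\gamma_{\boldsymbol\beta})$ agrees with the meromorphic continuation of $\mathcal L_{\boldsymbol\beta}(\varphi)$, a step the paper leaves implicit.
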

\begin{proof}
Observe that $\mathcal I_{\boldsymbol \beta} \psi_r$ is well defined (lemma \ref{ibeta}) and, as a function of $r$ is easily seen to be in $\mathcal C^\infty_c(\mathbb R)$. Moreover, the distribution $\mathcal I_{\boldsymbol \beta}$ is even, whereas $\psi_{-r}(\sigma, \tau) = \psi_r(-\sigma,-\tau)$, hence $\mathcal I_{\boldsymbol \beta} \psi_r$ is an even function of $r$. Now let $\gamma = \mathcal I_{\boldsymbol \beta} \psi_r$. Then \eqref{radint} can be rewritten as
$\mathcal L_{\boldsymbol \beta} (\varphi) = I_s(\gamma)
$.  Observe that $2d-1+\beta^0_1+\beta^0_2+\beta^0_3 = -1-2k$ and eventually apply \eqref{ress} to conclude.
\end{proof}

The expression obtained for the residue (viewed as a distribution on $E\times E\times E$) shows that it is supported by the diagonal $\mathcal D$. In fact, if $\varphi\in \mathcal C^\infty(S)$ vanishes on a neighbourhood of $\mathcal D$, then $\Phi$ vanishes in a neigbourhood of $(0,0)$ in $E\times E$, and hence $\psi_r$ vanishes identically for $\vert r\vert$ small enough, so that $\mathcal I_{\boldsymbol \beta}(\psi_r)=0$ for $\vert r\vert$ small enough. Hence the residue (evaluated against $\varphi$) at $\boldsymbol \beta^0$ vanishes. Now, the structure of distributions supported by a submanifold is known from Schwartz's theorem. This requires choosing at each point $(x,x,x)$ of $\mathcal D$ a complementary subspace to the tangent space ("normal coordinates") at $\mathcal D$. The choice will be
\[ \mathcal N_{x,x,x} = \{(x,y,z), y\in E, z\in E\}\ .
\]
Moreover, as we are interested in the trilinear form rather than the distribution, the space of test functions will be restricted to functions of the form $(f\otimes g)\,(x,y,z) = f(x) g(y,z), x\in E, (y,z)\in E\times E$. 

With this change of point of view and notation, let us write more explicitly \eqref{resg}. We will use the following convention for coordinates on $E\times E$ : for $1\leq i\leq 2d$, let
\[v_i = y_i, {\rm \ if\ }1\leq i\leq d,\qquad v_i = z_{i-d}, {\rm \ if\ } d+1\leq i\leq 2d\ .
\]
and similarly on $\Sigma$
\[\rho_i = \sigma_i, {\rm \ if\ }1\leq i\leq d \qquad \rho_i = \tau_{i-d}, {\rm \ if\ } d+1\leq i\leq 2d\ .
\]

\begin{theorem}\label{thres2}
 Let $\boldsymbol \beta^0$ satisfy the same assumptions as in Theorem \ref{thres1}. Let $f\in \mathcal C^\infty_c(E)$ and $g\in \mathcal C^\infty_c(E\times E)$. Then
\begin{equation}
Res(\mathcal L_{\boldsymbol \beta}(f\otimes g), \boldsymbol \beta^0)=
\frac{1}{\Gamma(2k+1)}  \int_E f(x) (D_{\boldsymbol \beta^0} g)(x) dx\ ,
\end{equation}
where $D_{\boldsymbol \beta^0}$ is the bidifferential operator with constant coefficients given by
\[D_{\boldsymbol \beta^0} g (v) = \sum_{1\leq i_1, i_2,\dots, i_{2k}\leq 2d} a_{i_1,i_2,\dots, i_{2k}}(\boldsymbol {\beta^0})\frac{\partial^{2k} g} {\partial v_{i_1}\partial v_{i_2} \dots \partial v_{i_{2k}}}(v,v)\]
where
\begin{equation}\label{integrals}
a_{i_1,i_2,\dots, i_{2k}}(\boldsymbol {\beta^0}) = \int_\Sigma \rho_{i_1}\dots \rho_{i_{2k}} {\vert}\sigma\vert^{\beta^0_3}\vert \tau\vert^{\beta^0_2} \vert \sigma-\tau\vert ^{\beta^0_1}d\mu(\sigma, \tau)\ .
\end{equation}
\end{theorem}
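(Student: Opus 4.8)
The plan is to obtain Theorem \ref{thres2} directly from the general residue formula \eqref{resg} of Theorem \ref{thres1} by specializing to test functions $\varphi = f\otimes g$ and making the $2k$-th radial derivative fully explicit. Since $f\otimes g\in\mathcal C^\infty_c(E\times E\times E)$, formula \eqref{resg} applies verbatim, and the whole matter reduces to computing $\left(\frac{d}{dr}\right)^{2k}_{|r=0}\mathcal I_{\boldsymbol\beta^0}(\psi_r)$. First I would record that, because $\boldsymbol\beta^0$ satisfies assumption $ii)$ (no $\beta^0_j$ lies in $-d-2\mathbb N$), the point $\boldsymbol\beta^0$ avoids every pole plane of Lemma \ref{ibeta}$\,ii)$; hence $\mathcal I_{\boldsymbol\beta^0}$ is a genuine distribution on $\Sigma$ (the meromorphic value, coinciding with the convergent integral \eqref{psi} when $\Re\beta^0_j>-d$ and with its analytic continuation in general). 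In particular the coefficients $a_{i_1,\dots,i_{2k}}(\boldsymbol\beta^0)$ of \eqref{integrals} are well defined, being the value of $\mathcal I_{\boldsymbol\beta^0}$ on the smooth monomial $\rho_{i_1}\cdots\rho_{i_{2k}}$ on $\Sigma$.

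Next I would justify passing $\left(\frac{d}{dr}\right)^{2k}$ through $\mathcal I_{\boldsymbol\beta^0}$. The map $r\mapsto\psi_r$ is smooth from $\mathbb R$ into the Fr\'echet space $\mathcal C^\infty(\Sigma)$ (as noted after \eqref{psir} it is continuous jointly in $(r,\Phi)$; smoothness in $r$ follows from that of $\Phi$), so the continuous linear functional $\mathcal I_{\boldsymbol\beta^0}$ commutes with $r$-differentiation and
\[
\left(\frac{d}{dr}\right)^{2k}_{|r=0}\mathcal I_{\boldsymbol\beta^0}(\psi_r)=\mathcal I_{\boldsymbol\beta^0}\Big(\left(\frac{d}{dr}\right)^{2k}_{|r=0}\psi_r\Big).
\]
The core computation is then the chain rule. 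Writing $\rho=(\rho_1,\dots,\rho_{2d})$ for the coordinates of $(\sigma,\tau)$ and $v=(v_1,\dots,v_{2d})$ for those of $E\times E$, the map $\psi_r(\sigma,\tau)=\Phi(r\sigma,r\tau)$ is affine in the base point, so each application of $\frac{d}{dr}$ raises the order of the $v$-derivative by one and pulls out one factor $\rho$, with no lower-order terms surviving; evaluating at $r=0$ gives
\[
\left(\frac{d}{dr}\right)^{2k}_{|r=0}\psi_r(\sigma,\tau)=\sum_{1\le i_1,\dots,i_{2k}\le 2d}\rho_{i_1}\cdots\rho_{i_{2k}}\frac{\partial^{2k}\Phi}{\partial v_{i_1}\cdots\partial v_{i_{2k}}}(0,0).
\]
Applying $\mathcal I_{\boldsymbol\beta^0}$ termwise and recognizing \eqref{integrals} yields $\left(\frac{d}{dr}\right)^{2k}_{|r=0}\mathcal I_{\boldsymbol\beta^0}(\psi_r)=\sum a_{i_1,\dots,i_{2k}}(\boldsymbol\beta^0)\,\frac{\partial^{2k}\Phi}{\partial v_{i_1}\cdots\partial v_{i_{2k}}}(0,0)$.

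Finally I would translate the $\Phi$-derivatives into $g$-derivatives on the diagonal. For $\varphi=f\otimes g$ the defining formula \eqref{Phi} reads $\Phi(y,z)=\int_E f(x)\,g(y+x,z+x)\,dx$; differentiating under the (compactly supported) integral and setting $(y,z)=(0,0)$ gives
\[
\frac{\partial^{2k}\Phi}{\partial v_{i_1}\cdots\partial v_{i_{2k}}}(0,0)=\int_E f(x)\,\frac{\partial^{2k}g}{\partial v_{i_1}\cdots\partial v_{i_{2k}}}(x,x)\,dx.
\]
Substituting this, interchanging the finite sum with the $x$-integral, and reading off the definition of $D_{\boldsymbol\beta^0}$ produces $\left(\frac{d}{dr}\right)^{2k}_{|r=0}\mathcal I_{\boldsymbol\beta^0}(\psi_r)=\int_E f(x)\,(D_{\boldsymbol\beta^0}g)(x)\,dx$; multiplying by $1/\Gamma(2k+1)$ and invoking \eqref{resg} completes the proof. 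The only genuinely delicate point is the legitimacy of the three interchanges — $r$-differentiation, the pairing with the (possibly analytically continued) distribution $\mathcal I_{\boldsymbol\beta^0}$, and the $x$-integration coming from $\Phi$ — and this is where I expect to spend the care: it is secured by the Fr\'echet-space smoothness of $r\mapsto\psi_r$, the continuity of $\mathcal I_{\boldsymbol\beta^0}$ on $\mathcal C^\infty(\Sigma)$, and the continuity of $\varphi\mapsto\Phi$, all available from the preceding lemmas.
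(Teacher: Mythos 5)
Your proposal is correct and follows essentially the same route as the paper: apply the residue formula \eqref{resg} of Theorem \ref{thres1} to $\varphi=f\otimes g$, commute $\bigl(\frac{d}{dr}\bigr)^{2k}_{|r=0}$ with the pairing against $\mathcal I_{\boldsymbol\beta^0}$, expand the radial derivative by the chain rule into monomials $\rho_{i_1}\cdots\rho_{i_{2k}}$ times diagonal derivatives of $g$ integrated against $f$, and read off the coefficients \eqref{integrals} (understood by meromorphic continuation). The only cosmetic difference is that you differentiate $\Phi$ at the origin first and then convert to $g$-derivatives, whereas the paper differentiates $\psi_r=\int_E f(v)g(v+r\sigma,v+r\tau)\,dv$ directly; your added care about the three interchanges is a welcome elaboration of steps the paper leaves implicit.
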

\begin{proof}
First, for $(\sigma, \tau)$ in $\Sigma$,
\[ \psi_r(\sigma, \tau)= \int_E f(v) g(v+r\sigma,v+r\tau) dv
\]
so that
\[{\big(\frac{d}{dr}\big)}^{2k}_{\vert r=0}\psi_r(\sigma, \tau) = \int_E f(v) {R}^{(2k)}_{\sigma, \tau} g(v) dv\ ,
\]
where $R_{\sigma, \tau}^{(2k)}$ is the bidifferential operator given  by
\[R_{\sigma, \tau} ^{(2k)}\,g  (v)= \sum_{1\leq i_1, i_2,\dots, i_{2k}\leq 2d} \rho_{i_1}\rho_{i_2}\dots \rho_{i_{2k}}\frac{\partial^{2k}\ g\ }{\partial v_{i_1}\partial v_{i_2} \dots \partial v_{i_{2k}}}(v,v)\ .
\]
Hence
\[
Res(\mathcal L_{\boldsymbol \beta}(f\otimes g), \boldsymbol \beta^0) ={\big(\frac{d}{dr}\big)}^{2k}_{\vert r=0}\,{ \mathcal{I}}_{\boldsymbol \beta^0}(\psi_r)=
{ \mathcal{I}}_{\boldsymbol \beta^0} \Big({\big(\frac{d}{dr}\big)}^{2k}_{\vert r=0} \psi_r \Big)
\]
\[ = {\mathcal{I}}_{\boldsymbol \beta^0}\big(\int_E f(v) R_{\sigma,  \tau}^{(2k)}g (v) dv\big)=
\int_E f(v) D_{\boldsymbol \beta^0} g(v) dv
\]

where $D_{\boldsymbol \beta^0}$ is the bidifferential operator given by
\[D_{\boldsymbol \beta^0} g (v) = \sum_{1\leq i_1, i_2,\dots, i_{2k}\leq 2d} a_{i_1,i_2,\dots, i_{2k}}({\boldsymbol \beta^0})\frac{\partial^{2k} g} {\partial v_{i_1}\partial v_{i_2} \dots \partial v_{i_{2k}}}(v,v)\]
where
\begin{equation*}
a_{i_1,i_2,\dots, i_{2k}}(\boldsymbol {\boldsymbol \beta^0}) = \int_\Sigma \rho_{i_1}\dots \rho_{i_{2k}}{\vert} \sigma\vert^{\beta^0_3}\vert \tau\vert^{\beta^0_2} \vert \sigma-\tau\vert ^{\beta^0_1}d\mu(\sigma, \tau)\ ,
\end{equation*}
with the same convention as above.

Of course these integrals are to be understood in the sense of distributions, obtained by meromorphic continuation.
 \end{proof}
When $k=0$, there is only one term, so $D_{\boldsymbol \beta^0 }= c_{\beta^0} \Id$ and it is possible to evaluate the constant $c_{\beta^0}$.

\begin{proposition}\label{res0}
Assume that $\beta^0_1+\beta_2^0+\beta_3^0 = -2d$, and assume that $\beta^0_j\notin -d-2\mathbb N, j=1,2$ or $3$. Then
\begin{equation}
Res(\mathcal L_{\boldsymbol \beta}(f), \boldsymbol \beta^0)=c_0(\boldsymbol \beta^0)\int_E f(x)dx
\end{equation}
where
 \begin{equation}
c_0(\boldsymbol \beta^0) =\frac{\pi^d}{(2\sqrt 2)^{3d}}\,\frac{\Gamma(2d)}{\Gamma(\frac{3d}{2})}\,\frac{ \Gamma(\frac{\beta^0_1+d}{2})}{\Gamma(\frac{-\beta^0_1-d}{2})}\,\frac{ \Gamma(\frac{\beta^0_2+d}{2})}{\Gamma(\frac{-\beta^0_2-d}{2})}\,\frac{ \Gamma(\frac{\beta^0_3+d}{2})}{\Gamma(\frac{-\beta^0_3-d}{2})}\ .
\end{equation}
 \end{proposition}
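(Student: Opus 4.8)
The plan is to reduce the whole statement to the evaluation of a single spherical integral, and then to compute that integral by relating it to a Euclidean integral that the Riesz composition (``star--triangle'') formula handles in closed form.

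First I would specialize Theorem \ref{thres2} to the case $k=0$. There the index set is empty, the operator $D_{\boldsymbol\beta^0}$ is multiplication by the one constant
\[
a_\emptyset(\boldsymbol\beta^0)=\int_\Sigma |\sigma|^{\beta^0_3}|\tau|^{\beta^0_2}|\sigma-\tau|^{\beta^0_1}\,d\mu(\sigma,\tau)=\mathcal I_{\boldsymbol\beta^0}(1),
\]
understood by meromorphic continuation, and since $\Gamma(2\cdot0+1)=1$ one gets $c_0(\boldsymbol\beta^0)=\mathcal I_{\boldsymbol\beta^0}(1)$. By Lemma \ref{ibeta} this number is finite, because the only poles of $\mathcal I_{\boldsymbol\beta}$ lie on the planes $\beta_j=-d-2\mathbb N$, which are excluded by hypothesis $ii)$. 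So the entire content of the proposition is the closed-form evaluation of this one spherical integral.

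The key device is a Gaussian regularization combined with homogeneity. Since $|\sigma|^2+|\tau|^2=1$ on $\Sigma$, the function $e^{-|\sigma|^2-|\tau|^2}$ is radial on $E\times E\cong\mathbb R^{2d}$, so integrating in polar coordinates exactly as in \eqref{radint} factorizes the Euclidean integral:
\[
J(\boldsymbol\beta):=\int_{\mathbb R^{2d}}|\sigma|^{\beta_3}|\tau|^{\beta_2}|\sigma-\tau|^{\beta_1}e^{-|\sigma|^2-|\tau|^2}\,d\sigma\,d\tau=\tfrac12\,\Gamma\!\Big(\tfrac{2d+\beta_1+\beta_2+\beta_3}{2}\Big)\,\mathcal I_{\boldsymbol\beta}(1).
\]
As $\Gamma$ has a simple pole at $0$ and $\mathcal I_{\boldsymbol\beta}(1)$ is regular on $\mathcal H_0$, this identity recovers the wanted value as a boundary residue, $\mathcal I_{\boldsymbol\beta^0}(1)=\lim_{\boldsymbol\beta\to\boldsymbol\beta^0}(2d+\beta_1+\beta_2+\beta_3)\,J(\boldsymbol\beta)$. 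To compute $J$ I would pass to the variant regulator $e^{-|\tau|^2}$ (a Gaussian in one variable only); it has the same residue along $\mathcal H_0$, since in the scaling limit the regulator tends to $1$, and it lets me integrate out $\sigma$ against the plain powers. Writing $a_j=-\beta_j/2$, the Riesz composition identity (obtained by Fourier transform, as the transform of $|x|^{-2a}$ is again a constant times a power of $|\xi|$) gives
\[
\int_{\mathbb R^d}|\sigma|^{-2a_3}|\sigma-\tau|^{-2a_1}\,d\sigma=\pi^{d/2}\,\frac{\Gamma(\tfrac d2-a_1)\,\Gamma(\tfrac d2-a_3)\,\Gamma(a_1+a_3-\tfrac d2)}{\Gamma(a_1)\,\Gamma(a_3)\,\Gamma(d-a_1-a_3)}\,|\tau|^{\,d-2a_1-2a_3},
\]
after which the remaining radial Gaussian integral in $\tau$ produces one more $\Gamma$-factor carrying the pole at $\mathcal H_0$. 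On $\mathcal H_0$ one has $a_1+a_2+a_3=d$, which symmetrizes the middle factors through $a_1+a_3-\tfrac d2=\tfrac d2-a_2$ and $d-a_1-a_3=a_2$; translating back via $\tfrac d2-a_j=\tfrac{\beta^0_j+d}{2}$ then assembles the product of ratios of $\Gamma$-values displayed in the statement. An equivalent route, which I would run in parallel as a check, is to Schwinger-parametrize all three factors, evaluate the $2d$-dimensional Gaussian as $\pi^d(\det M)^{-d/2}$, and read off the residue as a Dirichlet integral over the $2$-simplex that the Cremona involution evaluates to $\Gamma(\tfrac d2)^{-1}\prod_j\Gamma(\tfrac d2-a_j)$.

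The main obstacle is not the structure of the argument but the exact determination of the numerical constant. Two points demand care. First, one must justify that these formal manipulations are legitimate at the level of meromorphic continuation: each intermediate integral converges only in a shifted range of the $\beta_j$, so the honest argument fixes a common domain of convergence, carries out the computation there, and continues the resulting identity of holomorphic functions. Second, one must keep exact track of the accumulated powers of $2$ and $\pi$, of the normalization of the surface measure $d\mu$ on $\Sigma$, and of the normalization of the continued powers $|x|^{\beta}$; it is precisely this bookkeeping that pins down the global prefactor and the exact $\Gamma$-arguments in the closed formula, and where any discrepancy in constants would have to be resolved.
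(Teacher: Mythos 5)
Your reduction of the proposition to the single number $c_0(\boldsymbol\beta^0)=\mathcal I_{\boldsymbol\beta^0}(1)$ is correct and is exactly what Theorems \ref{thres1}--\ref{thres2} give at $k=0$; but this is a genuinely different route from the paper, which never evaluates the spherical integral \eqref{integrals} directly. The paper instead tests $\mathcal L_{\boldsymbol\beta}$ against the explicit (non--compactly supported, $\boldsymbol\beta$-dependent) function $f_0$ coming from the stereographic projection, quotes the closed form of $\mathcal K_{\boldsymbol\alpha}(1\otimes1\otimes1)$ from \cite{de}, \cite{ckop}, reads off the residue from the factor $\Gamma(\beta_1+\beta_2+\beta_3+2d)$, and divides by $\int_E f_0(x,x,x)\,dx$. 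Your route is more self-contained, and your individual steps (polar factorization $J=\tfrac12\Gamma(\tfrac{2d+\sum\beta_j}{2})\mathcal I_{\boldsymbol\beta}(1)$, the Riesz composition formula with the constant you wrote, the symmetrization $a_1+a_3-\tfrac d2=\tfrac d2-a_2$ on $\mathcal H_0$) are correct as far as they go.

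The genuine gap is the last sentence of your computation: the expression your method actually produces is
\begin{equation*}
\mathcal I_{\boldsymbol\beta^0}(1)\;=\;\frac{2\pi^d}{\Gamma(\tfrac d2)}\,\prod_{j=1}^{3}\frac{\Gamma\big(\tfrac{\beta^0_j+d}{2}\big)}{\Gamma\big(-\tfrac{\beta^0_j}{2}\big)}\ ,
\end{equation*}
since the denominators of the Riesz formula are $\Gamma(a_j)=\Gamma(-\beta^0_j/2)$ and the symmetrization turns $\Gamma(d-a_1-a_3)$ into $\Gamma(a_2)$, not into $\Gamma(a_2-\tfrac d2)$. This is \emph{not} the formula in the statement, whose denominators are $\Gamma(\tfrac{-\beta^0_j-d}{2})=\Gamma(a_j-\tfrac d2)$; the ratio of the two expressions is $\prod_j\Gamma(a_j-\tfrac d2)/\Gamma(a_j)$ up to a constant, which is a nonconstant function on the two-dimensional plane $\mathcal H_0$ (for $d=2$ it is essentially $\prod_j(-\beta^0_j-2)$). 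So no amount of bookkeeping of powers of $2$ and $\pi$ or of the normalization of $d\mu$ can close this: the two closed forms even have different zero sets on $\mathcal H_0$ ($\beta^0_1\in 2\mathbb N$ versus $\beta^0_1\in -d+2\mathbb N$). You must either find an error in the Riesz/Gaussian computation or confront a real discrepancy with the stated constant; note in this connection that at $\beta^0_1=0$ the kernel factorizes as $\vert x_3-x_1\vert^{\beta_2}\vert x_1-x_2\vert^{\beta_3}$ and $\mathcal L_{(0,\beta_2,\beta_3)}$ has no pole along $\beta_2+\beta_3=-2d$ on compactly supported test functions, so the residue (hence $\mathcal I_{\boldsymbol\beta^0}(1)$) must vanish there -- which your formula does, while the stated one does not when $d$ is odd. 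Resolving this requires examining whether the pairing $\mathcal L_{\boldsymbol\beta}(f_0)$ used in the paper's proof acquires extra poles from infinity (since $f_0$ grows polynomially on $\mathcal H_0$ and is not a Schwartz function); as written, your proposal simply asserts that the computation lands on the displayed constant, and it does not.
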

 \begin{proof}
 Let $c$ be the stereographic projection, defined from $E$ into the sphere $S$ of radius $1$ in $\mathbb R^{d+1}$ by
 \[c(x) = \begin{pmatrix} \frac{1-\vert x\vert^2}{1+\vert x\vert ^2}\\ \frac{2x_1}{1+\vert x\vert^2}\\ \vdots\\ \frac{2x_{d}}{1+\vert x\vert^2}\end{pmatrix}\ .
\]
 The map $c$ is conformal, and more precisely, for any tangent vector $\xi\in \mathbb R^d$,
 \[\vert Dc(x)\xi \vert= \frac{2}{1+\vert x\vert^2}\vert \xi\vert
 \]
 which implies
 \begin{equation*}
 \int_S f(\sigma) d\sigma =  2^d\int_E f(c(x)) (1+\vert x\vert^2)^{-d}dx 
 \end{equation*}
 for $f$ any integrable function  on $S$.
 Moreover, for any $x, y$ in $E$,
 \[\vert c(x)-c(y)\vert = \frac{2\vert x-y\vert}{(1+\vert x\vert ^2)^{1/2}(1+\vert y\vert ^2)^{1/2}}\ .
 \]
By the same change of variables, the trilinear form $\mathcal  L_{\boldsymbol \beta}$ is related to the trilinear form $\mathcal K_{\boldsymbol \alpha}$ on $S$  studied in \cite{co} through the relation
\begin{equation}\label{stereo}
\mathcal K_{\boldsymbol \alpha}(1\otimes 1\otimes 1) = 2^{\alpha_1+\alpha_2+\alpha_3+3\rho} \mathcal L_{\boldsymbol \beta}(f_0)
\end{equation}
where 
$\alpha_j = \beta_j+\rho$ for $j=1,2,3$, and $f_0$ is the function on $E\times E\times E$ given by
\[f_0(x_1,x_2,x_3) =(1+\vert x_1\vert ^2)^{-\frac{\beta_2+\beta_3}{2}}(1+\vert x_2\vert ^2)^{-\frac{\beta_3+\beta_1}{2}}(1+\vert x_3\vert ^2)^{-\frac{\beta_1+\beta_2}{2}}\ .
\]
The left handside of \eqref{stereo} has been computed in \cite{de}, and by a different method in \cite{ckop} (see also \cite{co}), so that 
\begin{equation}
\mathcal L_{\boldsymbol \beta}(f_0) =\big( \frac{\sqrt \pi}{2\sqrt 2}\big)^{3d}\,\frac{ \Gamma(\beta_1+\beta_2+\beta_3+2d)\Gamma(\frac{\beta_1+d}{2})\Gamma(\frac{\beta_2+d}{2})\Gamma(\frac{\beta_3+d}{2})}
{\Gamma(\frac{\beta_2+\beta_3+d}{2})\Gamma(\frac{\beta_3+\beta_1+d}{2})\Gamma(\frac{\beta_1+\beta_2+d}{2})}
\end{equation}
Hence,
\[Res(\mathcal L_{\boldsymbol \beta}({f_0}), \boldsymbol \beta^0) =\big( \frac{\sqrt \pi}{2\sqrt 2}\big)^{3d}\frac{ \Gamma(\frac{\beta^0_1+d}{2})\Gamma(\frac{\beta^0_2+d}{2})\Gamma(\frac{\beta^0_3+d}{2})}
{\Gamma(\frac{\beta^0_2+\beta^0_3+d}{2})\Gamma(\frac{\beta^0_3+\beta^0_1+d}{2})\Gamma(\frac{\beta^0_1+\beta^0_2+d}{2})}\ .
\]
Now
\[\int_E f_0(x,x,x) dx = \int_E (1+\vert x\vert^2)^{-(\beta_1+\beta_2+\beta_3)}\,dx= \int_E (1+\vert x\vert^2)^{-2d}\, dx
\]
\[ = vol(S^{d-1})\int_0^\infty (1+r^2)^{-2d}\, r^{d-1} dr = \pi^{\frac{d}{2}}\,\frac{\Gamma(\frac{3d}{2})}{\Gamma(2d)}
\]
from which the lemma follows.
 \end{proof}
 
In the general case (when $k\geq 1$), the integrals seem difficult to compute. Instead,  in order to evaluate more explicitly the residue,  it is possible to use the stronger invariance properties of the trilinear forms, which translate  into \emph{covariance properties} of the bidifferential operator $D_{\boldsymbol \beta^0}$.
 
 \section{Conformally covariant bidifferential operators}

Introduce the group of conformal transformations of $E$. A local transformation $\Phi$ of $E$ is said to be conformal if, at any point $x$ where $\Phi$ is defined, and for any tangent vector $\xi$,

\[ \vert D\Phi(x)\xi \vert= \kappa(x) \vert \xi\vert\ .
\]
where $\kappa$ is a smooth strictly positive function, called the conformal factor of $\Phi$. Classically, to any element of the group $G=SO_0(1,d+1)$, one can attach  a rational conformal action on $E$. If $d\geq 3$, then this group exhausts the group of positive local conformal diffeomorphisms (Liouville's theorem). The group $G$ operates globally on the sphere $S=S^d$ of dimension $d$ (see \cite{tak}) and this action can be transferred to a (not everywhere defined) action on $E$ by using the stereographic projection. It can also be realized as the group generated by the translations, the rotations, the dilations and the symmetry-inversion $\iota$
\[ \iota : x\longmapsto -\frac{x}{\vert x\vert^2}\ .\]
To any element $g\in G$, let $\kappa(g,x)$ be its conformal factor. Then $\kappa$ satisfies a cocycle relation, namely
\[
\kappa(g_1g_2,x) = \kappa(g_1,g_2(x))\kappa(g_2,x)\ .
\]
A family of representations is associated to that cocycle. For $\lambda$ in $\mathbb C$, define
\[\pi_\lambda(g) f(x) = \kappa(g^{-1},x)^{\rho+\lambda} f(g^{-1}(x))\ ,
\]
where we set $\rho=\frac{d}{2}$. These representations are the noncompact realization of the principal spherical series of $SO_0(1,d+1)$ (cf \cite{tak}).

Let $\boldsymbol \lambda = (\lambda_1,\lambda_2,\lambda_3)\in \mathbb C^3$. A continuous trilinear form $\mathcal L $ on $ \mathcal C^\infty_c(E)\times  \mathcal C^\infty_c(E)\times \mathcal C^\infty_c(E)$ is said to be \emph{conformally invariant} with respect to $(\pi_{\lambda_1}, \pi_{\lambda_2}, \pi_{\lambda_3})$ if, for three functions $f_1,f_2,f_3$ in $\mathcal C^\infty_c(E)$, for any $g$ in a (sufficently small) neighbourhood of the neutral element in $G$,
\begin{equation}
\mathcal L\big(\pi_{\lambda_1}(g)f_1\otimes \pi_{\lambda_2}(g)f_2\otimes \pi_{\lambda_3}(g)f_3\big)
=\mathcal L\big(f_1\otimes f_2\otimes f_3\big)\ .
\end{equation}

Recall the main result of \cite{co}, which was stated for the action of the group $G$ on the sphere, but the situations are essentially equivalent through a stereographic projection.

\begin{proposition} Let $\boldsymbol \beta\in \mathbb C^3$ and assume that none of the conditions \eqref{eqplane1}, \eqref{eqplane2} is satisfied. Let $\boldsymbol \lambda = (\lambda_1,\lambda_2,\lambda_3)$ be the unique element of $\mathbb C^3$ defined by the equations
\begin{equation}\label{betalambda}
\begin{split}
\beta_1 =& -\lambda_1+\lambda_2+\lambda_3-\rho\\
\beta_2 =&\ \lambda_1-\lambda_2+\lambda_3-\rho\\
\beta_3 =&\ \lambda_1+\lambda_2-\lambda_3-\rho\ .
\end{split}
\end{equation}
$i)$ The trilinear form $\mathcal L_{\boldsymbol \beta}$ is conformally invariant with respect to $\pi_{\lambda_1}\otimes \pi_{\lambda_2} \otimes \pi_{\lambda_3}$.

\noindent
$ii)$ Any continuous trilinear form which is conformally invariant with respect to $\pi_{\lambda_1}\otimes \pi_{\lambda_2} \otimes \pi_{\lambda_3}$ is proportional to $\mathcal L_{\boldsymbol \beta}$.
\end{proposition}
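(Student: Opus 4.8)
The plan is to treat the two assertions separately: $(i)$ by a change-of-variables computation followed by meromorphic continuation, and $(ii)$ by combining transitivity of $G$ on generic configurations with a vanishing statement on the singular set.

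For $(i)$, the starting point is the conformal transformation law of the distance: for every $g\in G$ and all $x,y$ in the domain of $g$,
\[|g(x)-g(y)| = \kappa(g,x)^{1/2}\,\kappa(g,y)^{1/2}\,|x-y|\ .\]
I would first check this on the generators of $G$ (translations and rotations give $\kappa\equiv 1$, the dilation $x\mapsto tx$ gives $\kappa\equiv t$, and the inversion $\iota$ gives $\kappa(\iota,x)=|x|^{-2}$), and then propagate it to all of $G$ by the cocycle relation for $\kappa$. Combined with the Jacobian identity $|\det Dg(y)| = \kappa(g,y)^{d}$ and the identity $\kappa(g^{-1},g(y))=\kappa(g,y)^{-1}$ (again from the cocycle relation), the substitution $x_j=g(y_j)$ in the integral defining $\mathcal L_{\boldsymbol\beta}(\pi_{\lambda_1}(g)f_1\otimes\pi_{\lambda_2}(g)f_2\otimes\pi_{\lambda_3}(g)f_3)$ produces, for each $j$, a single power of $\kappa(g,y_j)$. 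A short bookkeeping shows that the exponent attached to $y_1$ is $\tfrac12(\beta_2+\beta_3)-(\rho+\lambda_1)+d$, which vanishes iff $\beta_2+\beta_3=2\lambda_1-2\rho$, and likewise for $y_2,y_3$; these are exactly the relations \eqref{betalambda}. This proves invariance for $\Re(\beta_j)$ large enough that all integrals converge absolutely, and since both sides are meromorphic in $\boldsymbol\beta$ and regular off the planes \eqref{eqplane1}, \eqref{eqplane2}, the identity extends to every $\boldsymbol\beta$ satisfying the hypotheses.

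For $(ii)$, I would work on the open set $\Omega=\{(x_1,x_2,x_3)\,;\,x_1,x_2,x_3 \text{ pairwise distinct}\}$. The essential geometric input is that $G$ acts transitively on ordered triples of pairwise distinct points: any such triple can be sent to the model triple $(0,e_1,\infty)$ by composing a translation, the inversion $\iota$, a dilation and a rotation, and the stabilizer of this model triple is the subgroup of $SO(d)$ fixing $e_1$, namely $SO(d-1)$; the dimension count $\dim G=(d+1)(d+2)/2=3d+(d-1)(d-2)/2$ is consistent. Since $l_{\boldsymbol\beta}$ is smooth and nowhere zero on $\Omega$ and obeys the same covariance as the kernel of any invariant $\mathcal L$, the quotient of the two kernels is a genuine $G$-invariant distribution on the homogeneous space $\Omega\cong G/SO(d-1)$, hence a constant. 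Thus the restriction to $\Omega$ of any invariant $\mathcal L$ is a multiple of $\mathcal L_{\boldsymbol\beta}$, and after subtracting it one is left with an invariant trilinear form whose kernel is supported on the union of the partial diagonals $\{x_i=x_j\}$.

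The crux, and the step I expect to be the main obstacle, is proving that under the genericity hypothesis (none of \eqref{eqplane1}, \eqref{eqplane2} holding) there is no nonzero invariant distribution supported on these partial diagonals. I would analyse such a distribution via the Schwartz structure theorem along each diagonal stratum (exactly as in Section 1), writing it through transverse differential operators whose coefficients are themselves covariant data for a lower-dimensional configuration. The covariance then forces these coefficients to satisfy homogeneity and intertwining constraints which, precisely when $\boldsymbol\beta$ avoids \eqref{eqplane1} and \eqref{eqplane2}, admit only the zero solution — this is where the excluded arithmetic conditions enter, and it is the heart of the uniqueness assertion. For the complete representation-theoretic argument I would follow \cite{co}.
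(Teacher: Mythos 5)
The paper does not actually prove this proposition: it is quoted from \cite{co} (where it is established on the sphere) and transferred to $E$ by stereographic projection, so there is no in-paper argument to compare yours against. Your part $(i)$ is correct and essentially complete: the transformation law $|g(x)-g(y)|=\kappa(g,x)^{1/2}\kappa(g,y)^{1/2}|x-y|$, the Jacobian $\kappa(g,y)^d$, and the cancellation condition $\beta_2+\beta_3=2\lambda_1-2\rho$ (and its cyclic analogues) do reproduce \eqref{betalambda}, and the extension from the convergent range to all non-polar $\boldsymbol\beta$ by meromorphy is the standard and correct closing step. Your part $(ii)$ follows the same architecture as the proof in \cite{co}: sharp transitivity of $G$ on ordered triples of pairwise distinct points (your dimension count and stabilizer $SO(d-1)$ are right), constancy of an invariant distribution on a homogeneous space via the Lie-derivative/transitivity argument, and then reduction to excluding invariant distributions carried by the incidence variety $\bigcup_{i\neq j}\{x_i=x_j\}$. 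However, you should be explicit that the exclusion step is where \emph{all} of the hypotheses enter and that you have only described, not executed, it: one must stratify the incidence variety into the three partial diagonals minus $\mathcal D$ and the full diagonal $\mathcal D$, apply the Schwartz structure theorem transversally on each stratum (as in Section 1), and show that the homogeneity degree forced on a transverse term of order $k$ is compatible with covariance only when $\beta_j=-d-2k_j$ (on a partial diagonal) or $\beta_1+\beta_2+\beta_3=-2d-2k$ (on $\mathcal D$) --- precisely the excluded conditions \eqref{eqplane1} and \eqref{eqplane2}. Since the paper itself defers this to \cite{co}, your doing the same is consistent with the source, but as a standalone proof your $(ii)$ is a correct skeleton with its decisive bone missing.
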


By analytic continuation, the invariance is also valid for the residue of $\mathcal L_{\boldsymbol \beta}$ at some pole $\boldsymbol \beta^0$. At poles of second type, this invariance property can in turn be translated in a covariance property for the bidifferential operator $D_{\boldsymbol \beta^0}$.

\begin{definition} Let $D$ be a bidifferential operator from $\mathcal C^\infty_c(E\times E)$ into $\mathcal C^\infty_c(E)$. Let $\lambda, \mu, \nu$ be three complex numbers. Then $D$ is said to be \emph{covariant with respect to $(\pi_\lambda\otimes \pi_\mu, \pi_\nu)$} if for any functions $f \in \mathcal C^\infty_c(E\times E)$ and $g$ in a (small enough) neighbourhood of the neutral element in $G$,
\begin{equation*}
D(\pi_\lambda(g)\otimes \pi_\mu(g) f) = \pi_\nu(g)(Df)
\end{equation*}
\end{definition}

Recall the following duality result for the representations $\pi_\lambda$.

\begin{proposition} \label{dual}
Let $\lambda\in \mathbb C$. Then, for any functions $\varphi$ and $\psi$ in $\mathcal C^\infty_c(E)$ 
\[\int_E \pi_\lambda(g)\varphi (x) \psi(x) dx = \int_E\varphi(x) \pi_{-\lambda}(g^{-1}) \psi(x) dx\ .
\]
\end{proposition}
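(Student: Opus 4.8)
The plan is to derive the identity by the change of variables $y=g^{-1}(x)$ in the left-hand integral, the two essential ingredients being the cocycle behaviour of the conformal factor and its relation to the Jacobian of $g$. First I would write out the left-hand side from the definition of $\pi_\lambda$,
\[\int_E \pi_\lambda(g)\varphi(x)\,\psi(x)\,dx = \int_E \kappa(g^{-1},x)^{\rho+\lambda}\,\varphi(g^{-1}(x))\,\psi(x)\,dx\ ,\]
and substitute $x=g(y)$. Since $g$ is conformal with factor $\kappa(g,\cdot)$, its differential $Dg(y)$ scales every tangent vector by $\kappa(g,y)$, so in dimension $d$ the Jacobian determinant equals $\kappa(g,y)^{d}=\kappa(g,y)^{2\rho}$ and $dx=\kappa(g,y)^{2\rho}\,dy$.

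Next I would simplify the remaining conformal factor $\kappa(g^{-1},g(y))$. Setting $g_1=g^{-1}$ and $g_2=g$ in the cocycle relation, and using that the conformal factor of the neutral element is $1$, one gets
\[1 = \kappa(e,y) = \kappa(g^{-1},g(y))\,\kappa(g,y)\ ,\qquad\text{hence}\qquad \kappa(g^{-1},g(y)) = \kappa(g,y)^{-1}\ .\]
Inserting this and collecting the powers of $\kappa(g,y)$ (namely $-(\rho+\lambda)+2\rho=\rho-\lambda$) turns the left-hand side into
\[\int_E \kappa(g,y)^{\rho-\lambda}\,\varphi(y)\,\psi(g(y))\,dy\ .\]

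To finish, I would expand the right-hand side: applying the definition of $\pi_{-\lambda}$ to $g^{-1}$ gives $\pi_{-\lambda}(g^{-1})\psi(x)=\kappa(g,x)^{\rho-\lambda}\psi(g(x))$, so the right-hand integral is exactly $\int_E \varphi(x)\,\kappa(g,x)^{\rho-\lambda}\,\psi(g(x))\,dx$, which coincides with the expression just obtained after renaming $y$ to $x$. This establishes the claimed duality.

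The one point requiring care — and the main, though minor, obstacle — is that the conformal action of $G$ on $E$ is only rational: a generic $g$ sends one point of $E$ to infinity, so $g$ and $g^{-1}$ are not globally defined diffeomorphisms of $E$, and the change of variables must be justified. This is resolved exactly as in the covariance statements, by restricting to $g$ in a sufficiently small neighbourhood of the neutral element. Then the pole of $g^{-1}$ lies outside the fixed compact support of $\psi$ and the pole of $g$ outside that of $\varphi$, so $x=g(y)$ is a genuine diffeomorphism between neighbourhoods of the relevant supports and the substitution introduces no boundary contribution.
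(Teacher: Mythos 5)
Your proof is correct: the change of variables $x=g(y)$, the Jacobian identity $dx=\kappa(g,y)^{2\rho}dy$, the cocycle relation giving $\kappa(g^{-1},g(y))=\kappa(g,y)^{-1}$, and the restriction to $g$ near the identity so that the rational action is a genuine diffeomorphism on the supports involved are exactly the ingredients needed. The paper states this proposition without proof, as a recalled classical fact, and your argument is the standard one it implicitly relies on.
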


This duality result links together (singular) conformally invariant trilinear forms and covariant bidifferential operators.

\begin{proposition} Let $D$ be a bidifferential operator from $\mathcal C^\infty_c(E\times E)$ into $\mathcal C^\infty_c(E)$. Let $\mathcal L$ be the continuous trilinear form  defined for $f\in \mathcal C^\infty_c(E)$ and $g\in \mathcal C^\infty_c(E\times E)$ by
\[\mathcal L(f\otimes g) = \int_E f(x) Dg(x) dx\ .
\]
Let $\lambda, \mu, \nu$ be three complex numbers. Then the form $\mathcal L$ is invariant with respect to $(\pi_\lambda, \pi_\mu, \pi_\nu)$ if and only if $D$ is covariant with respect to $(\pi_\mu\otimes \pi_\nu, \pi_{-\lambda})$.
\end{proposition}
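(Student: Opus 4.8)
The plan is to reduce the statement to the duality relation of Proposition \ref{dual} together with the fact that the linear span of product functions $f_2\otimes f_3$ is dense in $\mathcal C^\infty_c(E\times E)$. The key observation is that when a triple of functions is fed to $\mathcal L$, the first slot plays the role of the test function $f$ in the definition of $\mathcal L$, while the pair formed by the last two slots plays the role of the argument of $D$; moreover $\pi_\mu(g)f_2\otimes\pi_\nu(g)f_3=(\pi_\mu(g)\otimes\pi_\nu(g))(f_2\otimes f_3)$, so the joint action of $G$ on the last two variables is exactly the one intertwined by $D$.

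First I would treat the implication ``$D$ covariant $\Rightarrow$ $\mathcal L$ invariant''. Starting from $\mathcal L(\pi_\lambda(g)f_1\otimes\pi_\mu(g)f_2\otimes\pi_\nu(g)f_3)=\int_E(\pi_\lambda(g)f_1)(x)\,D\big((\pi_\mu(g)\otimes\pi_\nu(g))(f_2\otimes f_3)\big)(x)\,dx$, I would substitute the covariance relation $D\big((\pi_\mu(g)\otimes\pi_\nu(g))h\big)=\pi_{-\lambda}(g)\,Dh$ with $h=f_2\otimes f_3$, obtaining $\int_E(\pi_\lambda(g)f_1)(x)\,\big(\pi_{-\lambda}(g)D(f_2\otimes f_3)\big)(x)\,dx$. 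Applying Proposition \ref{dual} with $\varphi=f_1$ and $\psi=\pi_{-\lambda}(g)D(f_2\otimes f_3)$, and using the representation identity $\pi_{-\lambda}(g^{-1})\pi_{-\lambda}(g)=\Id$, this collapses to $\int_E f_1(x)\,D(f_2\otimes f_3)(x)\,dx=\mathcal L(f_1\otimes f_2\otimes f_3)$, which is the asserted invariance.

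For the converse I would run the same chain of equalities backwards. Assuming $\mathcal L$ invariant gives $\int_E(\pi_\lambda(g)f_1)(x)\,D\big((\pi_\mu(g)\otimes\pi_\nu(g))(f_2\otimes f_3)\big)(x)\,dx=\int_E f_1(x)\,D(f_2\otimes f_3)(x)\,dx$, and Proposition \ref{dual} rewrites the left side as $\int_E f_1(x)\,\pi_{-\lambda}(g^{-1})\big[D\big((\pi_\mu(g)\otimes\pi_\nu(g))(f_2\otimes f_3)\big)\big](x)\,dx$. Since this equality holds for every $f_1\in\mathcal C^\infty_c(E)$ and the bracketed function is smooth, the fundamental lemma of the calculus of variations forces $\pi_{-\lambda}(g^{-1})D\big((\pi_\mu(g)\otimes\pi_\nu(g))(f_2\otimes f_3)\big)=D(f_2\otimes f_3)$; applying $\pi_{-\lambda}(g)$ yields the covariance relation on all product functions $f_2\otimes f_3$.

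The one point requiring care, and the main (mild) obstacle, is passing from product functions to an arbitrary $f\in\mathcal C^\infty_c(E\times E)$ in the covariance identity. Here I would invoke that finite linear combinations of functions $f_2\otimes f_3$ are dense in $\mathcal C^\infty_c(E\times E)$ and that all operators involved ($D$, $\pi_\mu(g)\otimes\pi_\nu(g)$, and $\pi_{-\lambda}(g)$) are continuous, so that both sides of $D\big((\pi_\mu(g)\otimes\pi_\nu(g))f\big)=\pi_{-\lambda}(g)Df$ depend continuously on $f$ and, agreeing on a dense subspace, agree everywhere. Throughout I would keep $g$ in a small neighbourhood of the identity so that all the (locally defined) conformal actions on the compactly supported functions at hand are well defined, exactly as required in the definitions of invariance and covariance.
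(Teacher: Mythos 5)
Your argument is correct and is exactly the one the paper intends: the proposition is stated there without proof as an immediate consequence of the duality in Proposition \ref{dual}, and your chain of equalities (plus the density of products $f_2\otimes f_3$ to pass from product functions to general $f$) is the standard way to make that precise. Nothing to add.
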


\begin{corollary} Let $\boldsymbol \beta= (\beta_1, \beta_2, \beta_3)\in \mathcal H_k$ for some $k\in \mathbb N$, and such that none of the conditions \eqref{eqplane1} is satisfied.  Let $D_{\boldsymbol \beta}$ be the associated bidifferential operator given by  Theorem \ref{thres2}. Let $\boldsymbol \lambda = (\lambda_1,\lambda_2,\lambda_3)$ be given by equations \eqref{betalambda}.
Then the bidifferential operator $D_{\boldsymbol \beta}$ is covariant with respect to $(\pi_{\lambda_2}\otimes \pi_{\lambda_3}, \pi_{\lambda_2+\lambda_3+\rho+2k})$
\end{corollary}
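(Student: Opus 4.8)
The plan is to obtain the covariance of $D_{\boldsymbol\beta}$ from the conformal invariance of the family $\mathcal L_{\boldsymbol\beta'}$ by passing to the residue along $\mathcal H_k$, and then to convert invariance into covariance by means of the proposition relating invariant trilinear forms and covariant bidifferential operators.

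First I would establish that the residue distribution $\mathcal L' := Res(\mathcal L_{\boldsymbol\beta'}, \boldsymbol\beta)$ is itself conformally invariant with respect to $\pi_{\lambda_1}\otimes\pi_{\lambda_2}\otimes\pi_{\lambda_3}$, where $\boldsymbol\lambda=(\lambda_1,\lambda_2,\lambda_3)$ is attached to $\boldsymbol\beta$ through \eqref{betalambda}. For $\boldsymbol\beta'$ in a neighbourhood of $\boldsymbol\beta$ but off the plane $\mathcal H_k$, none of the conditions \eqref{eqplane1}, \eqref{eqplane2} holds, so by the invariance proposition the identity $\mathcal L_{\boldsymbol\beta'}\big(\pi_{\lambda_1(\boldsymbol\beta')}(g)f_1\otimes\pi_{\lambda_2(\boldsymbol\beta')}(g)f_2\otimes\pi_{\lambda_3(\boldsymbol\beta')}(g)f_3\big)=\mathcal L_{\boldsymbol\beta'}(f_1\otimes f_2\otimes f_3)$ holds for $g$ near the identity. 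The crucial point is that $\boldsymbol\beta'\mapsto\lambda_j(\boldsymbol\beta')$ is affine and $\lambda\mapsto\pi_\lambda(g)f$ is entire as a map into the test-function space, so the argument on the left depends holomorphically on $\boldsymbol\beta'$, while the only singularity along $\mathcal H_k$ is the simple pole of $\mathcal L_{\boldsymbol\beta'}$. Choosing a complex coordinate $t$ transverse to $\mathcal H_k$ and writing $\mathcal L_{\boldsymbol\beta'}=\tfrac1t\,\mathcal L'+H(\boldsymbol\beta')$ with $H$ holomorphic, I would evaluate both sides on the holomorphic family of test vectors and take residues at $t=0$; since $\mathcal L'$ applied to a holomorphic family is holomorphic and the $H$-term contributes nothing, the residue is $\mathcal L'$ evaluated at the limiting vector. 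This yields $\mathcal L'\big(\pi_{\lambda_1}(g)f_1\otimes\pi_{\lambda_2}(g)f_2\otimes\pi_{\lambda_3}(g)f_3\big)=\mathcal L'(f_1\otimes f_2\otimes f_3)$. This residue-passing step is where the main care is required.

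Next, by Theorem \ref{thres2}, on functions of the form $f\otimes g$ one has $\mathcal L'(f\otimes g)=\frac{1}{\Gamma(2k+1)}\int_E f(x)(D_{\boldsymbol\beta}g)(x)\,dx$, so up to the harmless nonzero constant $\frac{1}{\Gamma(2k+1)}$ the invariant form $\mathcal L'$ is exactly the trilinear form attached to $D_{\boldsymbol\beta}$ in the proposition relating invariant forms and covariant operators. Applying that proposition with $(\lambda,\mu,\nu)=(\lambda_1,\lambda_2,\lambda_3)$ — the slot of $f$ carrying $\lambda_1$, and the two slots of $g$ carrying $\lambda_2,\lambda_3$ — the invariance of $\mathcal L'$ is equivalent to covariance of $D_{\boldsymbol\beta}$ with respect to $(\pi_{\lambda_2}\otimes\pi_{\lambda_3},\pi_{-\lambda_1})$.

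Finally I would identify the target parameter. Summing the three equations \eqref{betalambda} gives $\beta_1+\beta_2+\beta_3=\lambda_1+\lambda_2+\lambda_3-3\rho$; since $\boldsymbol\beta\in\mathcal H_k$ means $\beta_1+\beta_2+\beta_3=-2d-2k=-4\rho-2k$, this forces $\lambda_1+\lambda_2+\lambda_3=-\rho-2k$, hence $-\lambda_1=\lambda_2+\lambda_3+\rho+2k$. Substituting into the index found in the previous step shows that $D_{\boldsymbol\beta}$ is covariant with respect to $(\pi_{\lambda_2}\otimes\pi_{\lambda_3},\pi_{\lambda_2+\lambda_3+\rho+2k})$, as claimed. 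The only genuine obstacle is the residue-passing argument of the second paragraph; once the invariance of $\mathcal L'$ is in hand, the remaining steps are formal.
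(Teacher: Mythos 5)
Your proposal is correct and follows essentially the same route as the paper, which derives the corollary from the invariance of the residue (obtained by analytic continuation) combined with Theorem \ref{resbi}/\ref{thres2} and the duality-based proposition converting invariance of the form $\int_E f(x)Dg(x)\,dx$ into covariance of $D$. You simply spell out in more detail the residue-passing step and the computation $-\lambda_1=\lambda_2+\lambda_3+\rho+2k$, both of which the paper leaves implicit.
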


This is a consequence of the expression of the residue Theorem \ref{resbi}, together with the duality result (Proposition \ref{dual}).
The fact that none of the conditions \eqref{eqplane1} is satisfied amounts to the conditions 
\[\lambda_2, \lambda_3\notin -k+\mathbb N,\qquad \lambda_2+\lambda_3 \notin -\rho-k-\mathbb N\ .
\]

Covariant differential operators for the conformal group have been studied intensively, and the following result was obtained sometimes ago by V. Ovsienko and P. Redou (see \cite{or}). Recall the \emph{Pochhammer's symbol}, for $a$ a complex number and $m\in \mathbb N$
\[(a)_m = a(a+1)(a+2)\dots (a+m-1)\ .
\]

\begin{proposition} Let $k$ be a nonnegative integer, and let $\lambda, \mu$ be complex numbers.

$i)$ Assume that 
$ \lambda,\mu\notin
\{0, -1,-2,\dots, -(k-1)\}$.
Then there exists a  bidifferential operator $D_{\lambda, \mu}^{(k)}$ which is covariant with respect to $(\pi_\lambda\otimes \pi_\mu, \pi_{\lambda+\mu+\rho+2k})$.

$ii)$ Assume moreover that $\lambda, \mu \notin \{-\rho, -\rho-1,\dots, -\rho-(k-1)\}$. Then the operator is unique up to a constant.
\end{proposition}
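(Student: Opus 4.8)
The plan is to prove existence and uniqueness by passing to the infinitesimal level and then exploiting the generators of $G$ one family at a time. Since $G$ is connected and generated by a neighbourhood of the identity, a bidifferential operator $D$ is covariant with respect to $(\pi_\lambda\otimes\pi_\mu,\pi_\nu)$ with $\nu=\lambda+\mu+\rho+2k$ if and only if
\[
D\circ\big(d\pi_\lambda(X)\otimes\mathrm{id}+\mathrm{id}\otimes d\pi_\mu(X)\big)=d\pi_\nu(X)\circ D\qquad(X\in\mathfrak g),
\]
where $\mathfrak g=\mathfrak{so}(1,d+1)$. Because $d\pi_\lambda\otimes\mathrm{id}+\mathrm{id}\otimes d\pi_\mu$ and $d\pi_\nu$ are Lie algebra homomorphisms, the set of $X$ for which this identity holds is a Lie subalgebra; hence it suffices to verify it on a generating set. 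I would use the generators coming from the translations $\mathfrak n$, the rotations $\mathfrak m=\mathfrak{so}(d)$, the dilations $\mathfrak a$, and the special conformal transformations $\bar{\mathfrak n}$ (the $\iota$-conjugates of the translations), noting that $\mathfrak n,\mathfrak m,\mathfrak a$ together with a \emph{single} element of $\bar{\mathfrak n}$ already generate $\mathfrak g$, since $\bar{\mathfrak n}$ is an irreducible $\mathfrak m$-module.

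First I would dispose of the ``easy'' generators. Infinitesimal covariance under $\mathfrak n$ forces $D$ to have constant coefficients, so by the analysis already carried out (Theorem \ref{thres2}) it is of the form $P(\partial_y,\partial_z)$ restricted to the diagonal. Covariance under $\mathfrak m$ forces $P$ to be $O(d)$-invariant, and by classical invariant theory for two vector arguments $P$ is a polynomial in the three second-order operators
\[
\Delta_y=\sum_{j=1}^d\partial_{y_j}^2,\qquad \Delta_z=\sum_{j=1}^d\partial_{z_j}^2,\qquad \Delta_{yz}=\sum_{j=1}^d\partial_{y_j}\partial_{z_j}.
\]
Covariance under $\mathfrak a$ fixes the homogeneity at degree $2k$ (this is exactly where the relation $\nu=\lambda+\mu+\rho+2k$ enters), leaving the ansatz
\[
P=\sum_{a+b+c=k}C_{a,b,c}\,\Delta_y^{\,a}\,\Delta_z^{\,b}\,\Delta_{yz}^{\,c},
\]
a candidate space of dimension $\binom{k+2}{2}$ in the unknown constants $C_{a,b,c}$.

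The crux is the final generator. By $\mathfrak m$-equivariance, covariance under $\bar{\mathfrak n}$ reduces to a single vector identity, whose generator acts in $\pi_\lambda$ as a first-order field carrying the conformal weight, schematically $|x|^2\partial_{x_a}-2x_a\,(x\cdot\partial_x)-2(\rho+\lambda)x_a$. Substituting the ansatz and commuting the constant-coefficient operator $P$ past these generators produces the degree-lowering multiplications by $|x|^2$ and $x_a$; collecting coefficients after restriction to the diagonal yields a homogeneous linear recurrence among the $C_{a,b,c}$, with coefficients affine in $a,b,c,\lambda,\mu$ whose vanishing is governed by Pochhammer factors of the form $(\lambda+\cdots)_{\cdots}$ and $(\mu+\cdots)_{\cdots}$. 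I expect this to be the main obstacle: one must show that under the hypothesis $\lambda,\mu\notin\{0,-1,\dots,-(k-1)\}$ the recurrence can always be propagated from a single free constant without forcing an inconsistency (giving existence), and that under the further hypothesis $\lambda,\mu\notin\{-\rho,\dots,-\rho-(k-1)\}$ each propagation step is forced, so that the solution space is exactly one-dimensional (giving uniqueness up to a constant). Finally, I would remark that existence in part $(i)$ can also be read off the present paper without this computation: the corollary preceding the statement exhibits $D_{\boldsymbol\beta}$ of Theorem \ref{thres2} as a covariant operator with precisely the required source and target weights, so one need only match the non-degeneracy conditions and argue uniqueness as above.
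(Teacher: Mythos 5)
First, note that the paper itself gives no proof of this proposition: it is imported verbatim from Ovsienko and Redou \cite{or}, so there is no internal argument to compare yours against. Your reductions do follow the strategy of that reference and are sound as far as they go: infinitesimal covariance on a generating set of $\mathfrak{so}(1,d+1)$, translation covariance forcing constant coefficients, rotation covariance forcing (for $d\geq 2$, where $\Delta_y$, $\Delta_z$ and $\Delta_{yz}$ are algebraically independent) the ansatz $P=\sum_{a+b+c=k}C_{a,b,c}\,\Delta_y^{a}\Delta_z^{b}\Delta_{yz}^{c}$, and the dilation weight pinning down the homogeneity $2k$ via $\nu=\lambda+\mu+\rho+2k$. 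But the proof stops exactly where the content of the proposition begins. The whole point of the statement is the identification of the exceptional sets $\{0,-1,\dots,-(k-1)\}$ and $\{-\rho,\dots,-\rho-(k-1)\}$, and these can only emerge from actually writing down the linear system that the special conformal generator imposes on the $\binom{k+2}{2}$ unknowns $C_{a,b,c}$ and analysing its rank. You announce this as ``the main obstacle'' and then do not carry it out; as written, nothing in your argument shows that the system admits a nonzero solution under hypothesis $(i)$, nor that its solution space is exactly one-dimensional under hypothesis $(ii)$. (The explicit coefficients $c_{rst}$ displayed after the proposition, with their denominators $(\lambda+1)_r$ and $(\mu+1)_{t-p}$, are precisely the output of that missing analysis.)

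Your fallback for existence --- reading it off the corollary that makes $D_{\boldsymbol\beta}$ covariant --- does not close the gap either. That corollary requires $\lambda_2,\lambda_3\notin -k+\mathbb N$ and $\lambda_2+\lambda_3\notin -\rho-k-\mathbb N$, a strictly larger exceptional set than the one in part $(i)$: it excludes, for instance, every positive integer value of $\lambda$, which part $(i)$ allows. One would moreover have to verify that $D_{\boldsymbol\beta}\neq 0$, which is not automatic, since the residue can vanish identically for special $\boldsymbol\beta$ (see the remark following the final section based on Proposition \ref{res0}). So the residue construction yields existence only on a smaller parameter set, and some further argument (analytic continuation in $(\lambda,\mu)$ of a suitably normalized family, say) would still be needed to reach the full range claimed in $(i)$.
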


The operator $D_{\lambda ,\mu}^{(k)}$ is explicitly described.  The three fundamental bidifferential operators are $\Delta_y, \Delta_z$ and the operator $R$ defined  for $f$ in $\mathcal C^\infty(E\times E)$ by
\[ R(f)(x) = \sum_{j=1}^d \frac{\partial ^2 f}{\partial y_j \partial z_j}(x,x)\ .
\]
Then 
\[D_{\lambda ,\mu}^{(k)}= \sum_{r,s,t, r+s+t =k} c_{rst} \Delta_y^rR^s \Delta_z^t
\]
where the $c_{rst}$ are explicitly determined coefficients, depending on $\lambda, \mu$ and $k$, namely
\begin{equation}
\begin{split}
c_{rst} =& \frac{(-1)^{t-r}}{2^r\,r!}\begin{pmatrix}r+s+t\\t\end{pmatrix}\frac{(s+1)_r}{(\lambda+1)_r}
\\
&\sum_{p=0}^r\frac{r!t!}{p!}\,\frac{(\lambda+\rho+r-s+p)_{t-p}\,(\mu+\rho+s+2t)_{r-p}}{({\mu+1})_{t-p}}
\end{split}
\end{equation}
when $r\leq t$, and for $r\geq t$, $c_{rst}(\lambda,\mu) = c_{tsr}(\mu,\lambda)$.

\begin{theorem}\label{resor}
 Let $\boldsymbol \beta\in \mathcal H_k$ for some $k$ in $\mathbb N$. Assume moreover that none of the conditions \eqref{eqplane1} are satisfied. Let $\boldsymbol \lambda = (\lambda_1,\lambda_2,\lambda_3)$ the unique solution of the system \eqref{betalambda}. Then $(\lambda_2,\lambda_3)\notin \{0,-1,-2,\dots -(k-1)\}$ and \begin{equation}
D_{\boldsymbol \beta} = c_{\boldsymbol \beta} D_{\lambda_2,\lambda_3}^{(k)}
\end{equation}
for some constant $c_{\boldsymbol \beta}$.
\end{theorem}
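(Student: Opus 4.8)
The plan is to combine the covariance of $D_{\boldsymbol\beta}$ with the uniqueness assertion of the Ovsienko--Redou proposition. By the Corollary preceding that proposition, the residue operator $D_{\boldsymbol\beta}$ is covariant with respect to $(\pi_{\lambda_2}\otimes\pi_{\lambda_3},\pi_{\lambda_2+\lambda_3+\rho+2k})$, which is exactly the covariance type realized by $D^{(k)}_{\lambda_2,\lambda_3}$ (taking $\lambda=\lambda_2$, $\mu=\lambda_3$). Hence both operators lie in the same space of covariant bidifferential operators; if that space is one-dimensional they must be proportional. The whole argument therefore reduces to checking the hypotheses under which $D^{(k)}_{\lambda_2,\lambda_3}$ exists and is unique.

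First I would translate the assumption that none of the equations \eqref{eqplane1} holds into conditions on $\boldsymbol\lambda$. Using \eqref{betalambda} together with $\beta_1+\beta_2+\beta_3=-2d-2k$ (equivalently $\lambda_1+\lambda_2+\lambda_3=-\rho-2k$), one finds $\beta_2=-2\lambda_2-d-2k$ and $\beta_3=-2\lambda_3-d-2k$, so that $\beta_2,\beta_3\notin -d-2\mathbb N$ becomes $\lambda_2,\lambda_3\notin -k+\mathbb N$, while $\beta_1=2(\lambda_2+\lambda_3+k)$ turns $\beta_1\notin -d-2\mathbb N$ into $\lambda_2+\lambda_3\notin -\rho-k-\mathbb N$. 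Since $\{0,-1,\dots,-(k-1)\}\subset -k+\mathbb N$, the condition $\lambda_2,\lambda_3\notin -k+\mathbb N$ in particular gives $\lambda_2,\lambda_3\notin\{0,-1,\dots,-(k-1)\}$, which is simultaneously the first assertion of the theorem and the existence hypothesis $i)$ of the Ovsienko--Redou proposition. Thus $D^{(k)}_{\lambda_2,\lambda_3}$ is well defined and covariant of the required type.

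To reach proportionality I would invoke part $ii)$ of that proposition. On the subset of $\mathcal H_k$ where, in addition, $\lambda_2,\lambda_3\notin\{-\rho,-\rho-1,\dots,-\rho-(k-1)\}$, the covariant operator is unique up to a scalar, so there $D_{\boldsymbol\beta}=c_{\boldsymbol\beta}\,D^{(k)}_{\lambda_2,\lambda_3}$. To remove these extra conditions, which are \emph{not} consequences of \eqref{eqplane1}, I would argue by analytic continuation: the coefficients $a_{i_1\dots i_{2k}}(\boldsymbol\beta)$ of $D_{\boldsymbol\beta}$, defined by the meromorphically continued integrals \eqref{integrals}, depend holomorphically on $\boldsymbol\beta\in\mathcal H_k$ away from the poles, while the coefficients $c_{rst}$ of $D^{(k)}_{\lambda_2,\lambda_3}$ are rational in $(\lambda_2,\lambda_3)$. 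Fixing one coefficient of $D^{(k)}_{\lambda_2,\lambda_3}$ that is generically nonzero and defining $c_{\boldsymbol\beta}$ as the corresponding ratio, the proportionality of all coefficients, valid on an open dense subset, propagates to all of $\mathcal H_k$ satisfying \eqref{eqplane1}, with $c_{\boldsymbol\beta}$ a meromorphic function.

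The main obstacle is precisely this last step: the uniqueness statement only holds under condition $ii)$, which is strictly stronger than \eqref{eqplane1}, so the continuation argument is genuinely needed to cover the excluded loci. Care is required there because along these loci the normalization $c_{\boldsymbol\beta}$ (or the reference operator $D^{(k)}_{\lambda_2,\lambda_3}$ itself) may degenerate, and one must check that the identity persists as an identity of bidifferential operators and not merely where both sides happen to be nonzero. A secondary point, used implicitly and underlying the finiteness behind the uniqueness, is that covariance under rotations, translations and dilations already forces $D_{\boldsymbol\beta}$ to lie in the span of $\{\Delta_y^r R^s\Delta_z^t : r+s+t=k\}$, so that the comparison with $D^{(k)}_{\lambda_2,\lambda_3}$ takes place inside a fixed finite-dimensional space on which the remaining inversion-covariance conditions operate.
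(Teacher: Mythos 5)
Your proposal is correct and follows essentially the same route as the paper: identify the covariance type of $D_{\boldsymbol\beta}$ via the corollary and duality, check that the hypotheses on $\boldsymbol\beta$ translate into $\lambda_2,\lambda_3\notin -k+\mathbb N$ (hence existence of $D^{(k)}_{\lambda_2,\lambda_3}$ and the first assertion), apply the Ovsienko--Redou uniqueness on the generic part of $\mathcal H_k$, and remove the extra conditions by analytic continuation. Your discussion of why the continuation step is needed and what could go wrong there is actually more explicit than the paper's one-line treatment, but the argument is the same.
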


\begin{proof}
The conditions on $\boldsymbol \beta$ imply that 
 $\lambda_2,\lambda_3\notin -k+\mathbb N$, so that the conditions for the defintion of $D_{\lambda,\mu}^{(k)}$ are satisfied. Assume for a while that $\boldsymbol \beta$ is such that
  $\lambda_2,\lambda_3\notin \{0, -\rho,-\rho-1,\dots, -\rho-(k-1)\}$. By the uniqueness statement, there exists a constant $c_{\beta}$ such that $D_{\boldsymbol \beta} = c_{\boldsymbol \beta} D_{\lambda_2,\lambda_3}^{(k)}$. By analytic continuation in the plane $\mathcal H_k$, this equality remains valid on the domains were both sides are defined.
\end{proof}
For instance, if $k=1$,
\begin{equation}\label{resh1}
D_{\lambda,\mu}^{(1)} = -\frac{\mu+\rho}{\lambda+1}\,\Delta_y +2 R -\frac{\lambda+\rho}{\mu+1}\, \Delta_z\ .
\end{equation}

Theorem \ref{resor} however does not determinate the constant $c_{\boldsymbol \beta}$, and it seems quite difficult to test the operator $D_{\lambda_2,\lambda_3}^{(k)}$ against the function $f_0$ as we did for the determination of the residue at a pole in $\mathcal H_0$. 

\section{Bernstein-Sato identity}

A \emph{Bernstein-Sato identity} (on the first parameter) is an identity of the form
\[B \,\ell_{\boldsymbol \beta+2_1}= b(\boldsymbol \beta) \ell_{\boldsymbol \beta}\ ,
\]
where $\boldsymbol \beta +2_1 = (\beta_1+2,\beta_2,\beta_3)$, $B = B((x,y,z),\partial_x, \partial_y, \partial_z, {\boldsymbol \beta})$ is a differential operator with polynomial coefficients on $E\times E\times E$ and depending polynomially in $\boldsymbol \beta$, and $b$ is a polynomial in three complex variables. Such identities are known to exist (see \cite{sab1}, \cite {sab2}), but are in general very difficult to find. It turns out that, in the case at hand, it is possible to find such identities. The proof uses in a crucial way the covariance property of the kernel $l_{\boldsymbol \beta}$ with respect to the conformal action of $G$ on $E$. 

During the proof of some results, we will need to use the Euclidean Fourier transform. So it requires to extend the trilinear form to the Schwartz space $\mathcal S(E\times E\times E)$. This is merely routine. For the definition of invariance, one should formulate the condition in terms of the infinitesimal action of the conformal group. It is a classical computation (see e.g. \cite{or}) and the action of the Lie algebra $\mathfrak g = so(1,d+1)$ or of the universal enveloping algebra is by differential operators with polynomial coefficients. Hence they operate on the Schwartz space $\mathcal S(E)$. Moreover the meromorphic continuation of $\ell_{\boldsymbol \beta}$ yields tempered distributions (cf \cite{gs} for similar examples). Details are left to the reader.
\begin{lemma}\label{matrcoeff}
 Let $M$ be the operator on $\mathcal S(E\times E)$ given by
\[M\varphi(y,z) = \vert y-z\vert^2 \varphi(y,z)\ .
\]
Let $\lambda, \mu$ be two complex parameters.Then $M$ is an intertwining operator for $(\pi_\lambda\otimes \pi_\mu, \pi_{\lambda -1}\otimes \pi_{\mu-1})$.
\end{lemma}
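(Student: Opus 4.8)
The plan is to write out the intertwining relation explicitly and reduce it to a single pointwise identity expressing the conformal covariance of the squared-distance kernel. Recall that the tensor product representation acts by
\[
(\pi_\lambda\otimes \pi_\mu)(g)\varphi(y,z) = \kappa(g^{-1},y)^{\rho+\lambda}\,\kappa(g^{-1},z)^{\rho+\mu}\,\varphi(g^{-1}(y),g^{-1}(z))\ .
\]
Applying $M$ on the left and comparing with $(\pi_{\lambda-1}\otimes\pi_{\mu-1})(g)(M\varphi)$, one finds, after cancelling the common factor $\varphi(g^{-1}(y),g^{-1}(z))$ and the powers $\kappa(g^{-1},y)^{\rho+\lambda-1}\kappa(g^{-1},z)^{\rho+\mu-1}$, that the desired equality
\[
M\,(\pi_\lambda\otimes\pi_\mu)(g) = (\pi_{\lambda-1}\otimes\pi_{\mu-1})(g)\,M
\]
is equivalent to the scalar identity
\[
\vert g^{-1}(y)-g^{-1}(z)\vert^2 = \kappa(g^{-1},y)\,\kappa(g^{-1},z)\,\vert y-z\vert^2\ ,
\]
for all $y,z\in E$. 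Note that this reduction is purely algebraic and independent of $\lambda,\mu$, which is why the weights shift by exactly $1$ on each factor.

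It remains to prove, for every conformal transformation $h$ of $E$ with conformal factor $\kappa(h,\cdot)$, the covariance of the distance
\[
\vert h(x)-h(y)\vert^2 = \kappa(h,x)\,\kappa(h,y)\,\vert x-y\vert^2\ .
\]
First I would check this on the generators of $G$. For translations and rotations the conformal factor is $1$ and Euclidean distances are preserved; for a dilation $x\mapsto tx$ the factor is the constant $t$ and $\vert tx-ty\vert^2 = t^2\vert x-y\vert^2$; for the inversion $\iota(x)=-x/\vert x\vert^2$ a direct computation gives $\kappa(\iota,x)=\vert x\vert^{-2}$ and
\[
\vert \iota(x)-\iota(y)\vert^2 = \frac{\vert x-y\vert^2}{\vert x\vert^2\,\vert y\vert^2} = \kappa(\iota,x)\,\kappa(\iota,y)\,\vert x-y\vert^2\ .
\]
Then I would propagate the identity to all of $G$ using the cocycle relation $\kappa(g_1g_2,x)=\kappa(g_1,g_2(x))\kappa(g_2,x)$: if the identity holds for $h_1$ and $h_2$, then applying the covariance of $h_1$ to the pair $(h_2(x),h_2(y))$ and substituting the covariance of $h_2$ shows it holds for $h_1h_2$. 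Since $G$ is generated by translations, rotations, dilations and $\iota$, this establishes the identity for every $g\in G$, and hence the lemma.

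The only genuine obstacle is the bookkeeping in the inversion computation, which is routine; the rest is formal. One caveat to state carefully is that $G$ acts by rational, not everywhere-defined, transformations, so the intertwining relation should be read pointwise for $g$ in a neighbourhood of the identity (and away from the polar set of $g$), exactly as in the covariance statements used elsewhere in the paper. The operator $M$ itself is of course globally defined on $\mathcal S(E\times E)$, since multiplication by the polynomial $\vert y-z\vert^2$ preserves the Schwartz space.
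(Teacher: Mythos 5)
Your proof is correct and follows essentially the same route as the paper: the lemma reduces to the conformal covariance of the squared distance, $\vert g^{-1}(y)-g^{-1}(z)\vert^2=\kappa(g^{-1},y)\,\kappa(g^{-1},z)\,\vert y-z\vert^2$, which the paper simply invokes in a one-line computation while you verify it on the generators of $G$ and propagate it by the cocycle relation. The extra verification (including the inversion computation) is accurate, so your argument is a slightly more detailed version of the paper's.
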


\begin{proof} Let $g\in G$, and $\varphi \in \mathcal S(E\times E)$, and assume that $g$ is defined on a neighbourhood of $Supp(\varphi)$. Then
\[[M\circ \big(\pi_\lambda(g)\otimes \pi_\mu(g)\big)] \varphi(y,z) =\vert y-z\vert^2 \kappa(g^{-1}, y)^{\rho+\lambda} \kappa(g^{-1} ,z)^{\rho+\mu} \varphi(g^{-1}(y), g^{-1}(z))
\]
\[=\vert g^{-1}(y)-g^{-1}(z)\vert^2 \kappa(g^{-1}, y)^{\rho+\lambda-1} \kappa(g^{-1} ,z)^{\rho+\mu-1} \varphi(g^{-1}(y), g^{-1}(z))
\]
\[ =[ \pi_{\lambda-1}(g)\otimes \pi_{\mu-1}(g) ] (M\varphi)\ (y,z)\ .
\]
\end{proof}
Introduce now the Knapp-Stein intertwining operator. For $\nu$ a complex parameter, let $I_\nu$ be the operator on $\mathcal S(E)$ given by
\[I_\nu(f)(x) = \int_E \vert x-y\vert^{-d+\nu} f(y) dy\ .
\]
For $\Re \nu>0$, the integral is convergent and defines a continuous operator on $S(E)$. It can be meromorphically continued to $\mathbb C$, with simple poles at $\nu = -2k, k\in \mathbb N$. It satisfies the following intertwining property
\begin{equation}\label{intw}
I_{2\nu} \circ \pi_\nu(g) = \pi_{-\nu}(g) \circ I_{2\nu}\ .
\end{equation}
Now, for $\lambda, \mu$ two complex parameters, form the operator
 \[N_{ \lambda, \mu} = I_{-2\lambda-2}\otimes I_{-2\mu-2}\circ M\circ I_{2\mu}\otimes I_{2\mu}\]
\[\mathcal S(E\times E)\xrightarrow{I_{2\nu}\otimes I_{2\mu}} \mathcal S(E\times E)\xrightarrow{M} \mathcal S(E\times E)\xrightarrow {I_{-2\lambda-2}\otimes I_{-2\mu-2}}\mathcal S(E\times E)\ .
\]

For generic values of the parameters $(\lambda, \mu)$, $N_{\lambda, \mu}$ is a well defined operator on $\mathcal S(E\times E)$, which, by construction intertwines the representation $\pi_\lambda\otimes \pi_\mu$ and $\pi_{\lambda+1}\otimes \pi_{\mu+1}$.

Let $\boldsymbol \lambda = (\lambda_1,\lambda_2,\lambda_3)$ be a generic triple in $\mathbb C^3$, let $\boldsymbol \beta=(\beta_1,\beta_2,\beta_3)$ be the triplet associated to $\boldsymbol \lambda$ through \eqref{betalambda}. Observe that $\boldsymbol \beta +2_1$ is associated to the triple $(\lambda_1, \lambda_2+1,\lambda_3+1)$.

Consider the continuous trilinear form $\mathcal L$ on $\mathcal C^\infty_c(E)\times \mathcal C^\infty_c(E)\times \mathcal C^\infty_c(E)$ given by
\[ \mathcal L(f_1,f_2,f_3)  = \mathcal L_{\boldsymbol \beta+2_1}(f_1\otimes N_{\lambda_2,\lambda_3}(f_2\otimes f_3))\ .\]
From the intertwining property of $N_{\lambda_2,\lambda_3}$
\[\mathcal L(\pi_{\lambda_1}(g)f_1, \pi_{\lambda_2}(g)f_2,\pi_{\lambda_3} (g) f_3) = \mathcal L_{\boldsymbol \beta+2_1}(\pi_{\lambda_1}(g)f_1\otimes N_{\lambda_2, \lambda_3}[\pi_{\lambda_2}(g)f_2\otimes \pi_{\lambda_3}(g) f_3])
\]
\[ = \mathcal L_{\boldsymbol \beta+2_1}(\pi_{\lambda_1}(g)f_1\otimes [\pi_{\lambda_2+1}(g)\otimes \pi_{\lambda_3+1}(g)]\circ N_{\lambda_2,\lambda_3} [f_2\otimes f_3])
\]
\[ = \mathcal L_{\boldsymbol \beta+2_1}( f_1\otimes N_{\lambda_2,\lambda_3}( f_2\otimes f_3))
\]
\[ = \mathcal L(f_1,f_2,f_3)\ ,
\]
so that the form $\mathcal L$ is invariant w.r.t. $(\pi_{\lambda_1}, \pi_{\lambda_2}, \pi_{\lambda_3})$.
By the generic uniqueness result  on the invariant trilinear form (see \cite{co}), the form $\mathcal L$ has to be proportional to $\mathcal L_{\boldsymbol \beta}$. Hence there exists a constant $e=e(\boldsymbol \beta)$  such that
\begin{equation}\label{bsheur}
(N_{\lambda_2,\lambda_3})^t \big(l_{\boldsymbol \beta+2_1}\big) = e(\boldsymbol \beta) l_{\boldsymbol \beta}\ .
\end{equation}
As we will see now, the operator $N_{\lambda, \mu}$ (hence also its transpose) is a differential operator on $E\times E$, so that \eqref{bsheur} {\it is} indeed a Bernstein-Sato identity.

\begin{proposition}
For $\lambda,\mu \in \mathbb C^2$, let $E_{\lambda,\mu}$ be the differential operator on $E\times E$ defined by
\begin{equation}
\begin{split}
E_{\lambda, \mu} =&\ \vert y-z\vert^2 \Delta_{y}\Delta_{z}\\
&-4\mu\sum_{j=1}^d (z_j-y_j)\frac{\partial}{\partial z_j}\Delta_y
-4\lambda\sum_{j=1}^d (y_j-z_j)\frac{\partial}{\partial y_j}\Delta_z\\
&+2\mu(2\mu+2-d) \Delta_y
+2\lambda(2\lambda+2-d)) \Delta_z\\
&-8 \lambda\mu\sum_{j=1}^d\frac{ \partial}{\partial y_j}\frac{ \partial}{\partial z_j}\ .\\
\end{split}
\end{equation}
Its transpose $F_{\lambda,\mu}=E_{\lambda,\mu}^t$ is given by
\begin{equation}\label{flambdamu}
\begin{split}
F_{\lambda,\mu} =& \ \vert y-z\vert^2 \Delta_{y}\Delta_{z}\\
&+4(\mu+1) \sum_{j=1}^d (z_j-y_j)\frac{\partial}{\partial z_j}\Delta_y+4(\lambda+1) \sum_{j=1}^d (y_j-z_j) \frac{\partial}{\partial y_j}\Delta_z\\
&+4(\mu+1)(\mu+\rho) \Delta_y+4(\lambda+1)(\lambda+\rho)\Delta_z\\
&-8(\lambda+1)(\mu+1)\sum_{j=1}^d \frac{ \partial}{\partial y_j}\frac{ \partial}{\partial z_j}\\
\end{split}
\end{equation}
 The operator $N_{\lambda, \mu}$ (for generic $(\lambda, \mu)$) is a differential operator on $E\times E$, and is given by
 \[N_{\lambda, \mu} = c(\lambda, \mu) F_{\lambda, \mu}\ ,
 \]
 where
 \[c(\lambda, \mu) = \frac{\pi^{2d}}{16} \,\frac{\Gamma(\lambda)\Gamma(-\lambda-1) \Gamma(\mu)\Gamma(-\mu-1)}{\Gamma(\rho-\lambda)\Gamma(\rho+\lambda+1)\Gamma(\rho-\mu)\Gamma(\rho+\mu+1)}\ .
 \]
 
 \end{proposition}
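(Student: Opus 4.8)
The plan is to pass to the Fourier transform, where the four Knapp--Stein factors become multiplications by powers of $|\eta|$ and $|\zeta|$ (the variables dual to $(y,z)$) and the middle factor $M$ becomes a constant-coefficient differential operator. First I would record the Fourier multiplier of the Riesz potential: with the convention $\hat f(\xi)=\int_E f(x)e^{-i\langle\xi,x\rangle}\,dx$, one has $\widehat{I_\nu f}(\xi)=\gamma(\nu)\,|\xi|^{-\nu}\hat f(\xi)$, where $\gamma(\nu)=2^\nu\pi^{d/2}\Gamma(\nu/2)/\Gamma((d-\nu)/2)$ (classical, see \cite{gs}, valid for generic $\nu$ by meromorphic continuation). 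Then $I_{2\lambda}\otimes I_{2\mu}$ acts as multiplication by $\gamma(2\lambda)\gamma(2\mu)|\eta|^{-2\lambda}|\zeta|^{-2\mu}$, the factor $I_{-2\lambda-2}\otimes I_{-2\mu-2}$ as multiplication by $\gamma(-2\lambda-2)\gamma(-2\mu-2)|\eta|^{2\lambda+2}|\zeta|^{2\mu+2}$, while $M$ (multiplication by $|y-z|^2=\sum_j(y_j-z_j)^2$) turns into $-\sum_j(\partial_{\eta_j}-\partial_{\zeta_j})^2$.

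The core step is to expand $\widehat{N_{\lambda,\mu}\varphi}=-C\,|\eta|^{2\lambda+2}|\zeta|^{2\mu+2}\sum_j D_j^2\big(|\eta|^{-2\lambda}|\zeta|^{-2\mu}\hat\varphi\big)$ by the Leibniz rule, where $D_j=\partial_{\eta_j}-\partial_{\zeta_j}$ and $C=\gamma(2\lambda)\gamma(-2\lambda-2)\gamma(2\mu)\gamma(-2\mu-2)$. Writing $a=|\eta|^{-2\lambda}$ and $b=|\zeta|^{-2\mu}$, the sum $\sum_jD_j^2(ab\,\hat\varphi)$ separates into the top-order part $ab\sum_jD_j^2\hat\varphi$, the cross part $2\sum_j(D_j(ab))D_j\hat\varphi$, and the scalar part $(\sum_jD_j^2(ab))\hat\varphi$. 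The key observation is that the individually singular negative powers of $|\eta|$ and $|\zeta|$ created by differentiating $a$ and $b$ recombine after summation over $j$ (through $\sum_j\eta_j^2=|\eta|^2$, $\sum_j\zeta_j^2=|\zeta|^2$, $\sum_j\eta_j\zeta_j=\eta\cdot\zeta$), so that after multiplying by $|\eta|^{2\lambda+2}|\zeta|^{2\mu+2}$ every exponent is a nonnegative integer. This simultaneously shows that $N_{\lambda,\mu}$ has a polynomial symbol, hence \emph{is} a differential operator, and produces its coefficients explicitly.

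I would then transform the resulting symbol back and compare it, term by term, with the Fourier symbol of $F_{\lambda,\mu}$ obtained directly from \eqref{flambdamu} (using $\partial_{y_j}\mapsto i\eta_j$, $\Delta_y\mapsto-|\eta|^2$, multiplication by $y_j\mapsto i\partial_{\eta_j}$, and similarly in $z,\zeta$). Grouping by the invariants $|\eta|^2|\zeta|^2$, $\eta_j|\zeta|^2$, $|\eta|^2\zeta_j$, $|\eta|^2$, $|\zeta|^2$ and $\eta\cdot\zeta$, one checks that the coefficient of the top-order part $\sum_j D_j^2\hat\varphi$, of the first-order part, and of the scalar part agree on both sides; for instance the $|\eta|^2$-coefficient collapses to $-4\mu(\mu+1-\rho)$ and the $\eta\cdot\zeta$-coefficient to $8\lambda\mu$, with $\rho=d/2$. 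This yields $N_{\lambda,\mu}=C\,F_{\lambda,\mu}$, and $C$ simplifies by $\gamma(2\lambda)\gamma(-2\lambda-2)=\tfrac14\pi^d\,\Gamma(\lambda)\Gamma(-\lambda-1)/\big(\Gamma(\rho-\lambda)\Gamma(\rho+\lambda+1)\big)$ (and its $\mu$ analogue) to the stated $c(\lambda,\mu)$. Finally, the identity $F_{\lambda,\mu}=E_{\lambda,\mu}^t$ is a routine integration by parts: transposing each of the six terms of $E_{\lambda,\mu}$ and collecting contributions, the first-order terms feed, through commutators with their polynomial coefficients, into the $\Delta_y,\Delta_z$ and $\sum_j\partial_{y_j}\partial_{z_j}$ terms, and the six resulting coefficients match \eqref{flambdamu} exactly; in particular the shifts $-4\mu\mapsto 4(\mu+1)$ and $2\mu(2\mu+2-d)\mapsto 4(\mu+1)(\mu+\rho)$ come out automatically.

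The main obstacle is the bookkeeping in the core step: one must verify cleanly that the singular $|\eta|^{-2}$ and $|\zeta|^{-2}$ contributions cancel after summing over $j$ — this is precisely what makes $N_{\lambda,\mu}$ local — and then track all six coefficient groups to their exact values in \eqref{flambdamu}. The Gamma-function reduction of $C$ to $c(\lambda,\mu)$ is the other point requiring care, though it is a direct substitution into the formula for $\gamma(\nu)$.
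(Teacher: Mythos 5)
Your proposal follows essentially the same route as the paper: conjugate $M$ by the Knapp--Stein multipliers on the Fourier side, expand by Leibniz, observe that the negative powers of $|\eta|,|\zeta|$ recombine so that $N_{\lambda,\mu}$ is local, convert back, and evaluate the product of the four Gamma factors (your spot-checks, e.g.\ the $|\eta|^2$-coefficient $-4\mu(\mu+1-\rho)=-2\mu(2\mu+2-d)$ and the $\eta\cdot\zeta$-coefficient $8\lambda\mu$, are correct). The only difference is cosmetic: the paper inverts the Fourier transform and regroups terms such as $\Delta_y(|y|^2f)\Delta_z g$ in physical space, whereas you compare symbols directly; both are the same computation.
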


\begin{proof} Introduce the Fourier transform on  $E$, defined by
\[\mathcal F f (\xi) = \hat f(\xi)= \int_{E\times E} e^{-i(\xi,x)}f(x) dx \ ,\]
and extend it by duality to $\mathcal S'(E)$. The Fourier transform on $E\times E$ is defined accordingly.
Observe that $I_\nu$ is a convolution operator with a tempered distribution, so that the Fourier transform of $I_\nu f$ is given by the product of the Fourier transform, i.e. 
\[\mathcal F(I_\nu f) ( \xi) = c(\nu) \vert \xi\vert^{-\nu} \hat f(\xi)
\]
where 
\[ c(\nu) = 2^\nu \pi^{\frac{d}{2}}\frac{\Gamma(\frac{\nu}{2})}{\Gamma(\frac{d-\nu}{2})}\]
(see e.g. \cite{gs}).

Next, as $M$ acts by multiplication by a polynomial, the Fourier transform of $M\varphi$ is given by
\[\mathcal F(M\varphi) (\xi, \eta) = (-\Delta_\xi +2 R - \Delta_\eta) \hat \varphi (\xi, \eta),
\]
where $\Delta$ is the Laplacian on $E$ and $R$ is the differential operator on $\mathcal S(E\times E)$ defined by
\[R \varphi (\xi, \eta) =  \sum_{j=1}^d \frac{\partial^2 \varphi}{\partial \xi_j\partial \eta_j}\ .
\]
To prove the formula, it is enough to prove it for functions $\varphi = f\otimes g$, where $f,g\in \mathcal S(E)$.
Now
\[\frac{\partial}{\partial \xi_j} (\vert \xi\vert^{-2\lambda} \hat f(\xi)) = \vert\xi\vert^{-2\lambda} \frac{\partial \hat f}{\partial \xi_j}-2\lambda \vert \xi\vert^{-2\lambda-2} \xi_j \hat f(\xi),
\]
so that
\begin{equation*}
	\begin{split}
	\Delta_\xi (\vert \xi\vert^{-2\lambda} \hat f(\xi)) &= \vert\xi\vert^{-2\lambda} \Delta \hat f (\xi) \\
	&\quad  -4\lambda\vert \xi\vert^{-2\lambda-2}\sum_{j=1}^d \xi_j \frac{\partial \hat f}{\partial \xi_j}\\&\quad + 2\lambda(2\lambda 
	 +2-d)\vert \xi \vert^{-2\lambda-2}\hat f(\xi)
	\end{split}
\end{equation*}
and
\begin{equation*}\begin{split}
R(\vert\xi\vert^{-2\lambda}  \hat f(\xi) \vert\eta\vert^{-2\mu}\hat g(\eta)) &=\ 
 \sum_{j=1}^d \vert \xi\vert^{-2\lambda} \frac{\partial \hat f}{\partial \xi_j}\,\vert\eta\vert^{-2\mu} \frac{\partial \hat g}{\partial \eta_j}\\
& \quad -2\lambda\sum_{j=1}^d  \xi_j \vert \xi\vert^{-2\lambda-2} \hat f(\xi) \vert\eta\vert^{-2\mu} \frac{\partial \hat g}{\partial \eta_j}\\
&\quad -2\mu \sum_{j=1}^d \vert \xi\vert^{-2\lambda} \frac{\partial \hat f}{\partial \xi_j} \eta_j  \vert \eta\vert^{-2\mu-2} \hat g (\eta)\\
& \quad +4\lambda\mu \vert \xi\vert^ {-2\lambda-2} \vert \eta\vert^{-2\mu-2}(\sum_{j=1}^d  \xi_j\eta_j)\hat f(\xi)\hat g(\eta).\\
\end{split}
\end{equation*}
Let apart the factor $ c(2\lambda)c(2\mu)c(-2\lambda-2)c(-2\mu-2)$, the Fourier transform of $N_{\lambda, \mu}(f\otimes g)$ is given by
\begin{multline*}
- \vert\xi\vert^2 \Delta_\xi \hat f (\xi) \vert\eta\vert^2\hat g(\eta)
+4\lambda\sum_{j=1}^d \xi_j \frac{\partial \hat f}{\partial \xi_j}(\xi)\vert \eta\vert^2\hat g(\eta)\\
- 2\lambda(2\lambda+2-d)\hat f(\xi)\vert \eta \vert^2\hat g(\eta)
+2 \sum_j\vert \xi\vert^2 \frac{\partial \hat f}{\partial \xi_j}(\xi)\vert \eta\vert^2\frac{\partial \hat g}{\partial \eta_j}(\eta)\\
-4\lambda  \sum_{j=1}^d \xi_j  \hat f(\xi)\vert \eta \vert^2\frac{\partial \hat g}{\partial \eta_j}(\eta)
-4\mu  \sum_{j=1} \vert \xi\vert^2\frac{\partial \hat f}{\partial \xi_j}(\xi) \eta_j  \hat g(\eta)\\
+8\lambda\mu \sum_{j=1}^d \xi_j\hat f(\xi) \eta_j \hat g(\eta)
-\vert   \xi\vert^2\hat f(\xi)\vert \eta\vert^2\Delta_\eta \hat g(\eta)\\
+4\mu\sum_{j=1}^d \vert\xi\vert^2 \hat f(\xi) \eta_j\frac{\partial \hat g}{\partial \eta_j}(\eta)
-2\mu(2\mu+2-d) \vert \xi\vert^2 \hat f(\xi)\hat g(\eta)\ .\\
\end{multline*}

Now use the classical formul\ae
\begin{align*}
\widehat{ \frac{\partial f}{\partial y_j}} (\xi) &= i\xi_j \hat f(\xi) &\widehat {( y_jf)} (\xi) &= i \frac{\partial \hat f}{\partial \xi_j} (\xi) \\
\widehat {\Delta f}(\xi) &= -\vert \xi\vert^2{\hat{f}(\xi)}  &\widehat{\vert y\vert^2f(y)} (\xi) &= -\Delta \hat f (\xi) 
\end{align*}
to obtain the following expression for $N_{\lambda,\mu}(f\otimes g)$ (up to the factor $ c(2\lambda)c(2\mu)c(-2\lambda-2)c(-2\mu-2)$) :

\begin{multline*}
\Delta_y (\vert y\vert^2 f) \Delta_zg
+4\lambda \sum_{j=1}^d \frac{\partial}{\partial y_j}(y_jf) \Delta_zg\\
+2\lambda(2\lambda+2-d) f \Delta_zg
-2\sum_{j=1}^d \Delta_y (y_jf) \Delta_z(z_jg)\\
-4\lambda \sum_{j=1}^d \frac{\partial}{\partial y_j} f \Delta_z (z_jg)
-4\mu \sum_{j=1}^d \Delta_y(y_jf) \frac{\partial}{\partial z_j} g\\
-8\lambda\mu \sum_{j=1}^d \frac{\partial}{\partial y_j} f \frac{\partial}{\partial z_j} g
+\Delta_yf \Delta_z(\vert z\vert^2 g)\\
+4\mu \sum_{j=1}^d \Delta_y f \frac{\partial}{\partial z_j} (z_j g)
+2\mu(2\mu+2-d) \Delta_y f \, g\\
\end{multline*}
The final expression for $N_{\lambda ,\mu} $ and $(N_{\lambda ,\mu})^t$ follows easily.
\end{proof}
As announced,  $E_{\lambda_2, \lambda_3}$ (being proportional to $N_{\lambda_2,\lambda_3}^t$)is a candidate for a Bernstein-Sato identity for the kernel $\ell_{\boldsymbol \beta}$ (the $\lambda$'s being related to $\boldsymbol \beta$ by \eqref{betalambda}). By brute force computation, the following identity is obtained.

\begin{theorem} [Bernstein-Sato identity]\label{bs} For $\boldsymbol \beta=(\beta_1,\beta_2,\beta_3)$ in $\mathbb C^3$, let $B_{\boldsymbol \beta}$ be the following differential operator on $E\times E$
\begin{equation*}
\begin{split}
B_{\boldsymbol \beta} =& \vert y-z\vert^2 \Delta_y\Delta_z\\
&+2(\beta_3+\beta_1+d) \sum_{j=1}^d (z_j-y_j) \frac{\partial}{\partial y_j} \Delta_z+2 (\beta_2+\beta_1+d) \sum_{j=1}^d (y_j-z_j) \frac{\partial}{\partial z_j} \Delta_y\\
&+(\beta_3+\beta_1+d)(\beta_3+\beta_1+2) \Delta_z+(\beta_2+\beta_1+d)(\beta_2+\beta_1+2) \Delta_y \\
&-2(\beta_3+\beta_1+d)(\beta_2+\beta_1+d) \sum_{j=1}^d \frac{\partial^2}{\partial y_j\partial z_j}\ .\\
\end{split}
\end{equation*}
Then
\begin{equation}
B_{\boldsymbol \beta}\, l_{\boldsymbol \beta+2_1} = b(\beta) l_{\boldsymbol \beta}
\end{equation}
where
\[b(\boldsymbol \beta) = (\beta_1+d)(\beta_1+2)(\beta_1+\beta_2+\beta_3+2d)(\beta_1+\beta_2+\beta_3+d+2)\ .
\]
\end{theorem}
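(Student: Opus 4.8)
The plan is to reduce the claim to a pointwise identity between explicit functions and to verify it by direct computation, using the material preceding the statement only to predict the shape of the answer. Since the coefficients of $B_{\boldsymbol\beta}$ depend on $(y,z)$ only through $y-z$ and the operator is built from $\partial_{y},\partial_{z}$ (acting on the second and third variables of $l_{\boldsymbol\beta}$, the first variable $x_1$ being inert), $B_{\boldsymbol\beta}$ commutes with the diagonal translations $T_a:(x_1,x_2,x_3)\mapsto(x_1+a,x_2+a,x_3+a)$, under which $l_{\boldsymbol\beta}$ is invariant. Hence $B_{\boldsymbol\beta}l_{\boldsymbol\beta+2_1}(x_1,x_2,x_3)=B_{\boldsymbol\beta}l_{\boldsymbol\beta+2_1}(0,x_2-x_1,x_3-x_1)$, so it suffices to treat $x_1=0$. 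Writing $y=x_2,\ z=x_3$, the assertion becomes the PDE identity
\[
B_{\boldsymbol\beta}\bigl(|y-z|^{\beta_1+2}\,|y|^{\beta_3}\,|z|^{\beta_2}\bigr)=b(\boldsymbol\beta)\,|y-z|^{\beta_1}\,|y|^{\beta_3}\,|z|^{\beta_2}.
\]

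First I would record a structural guarantee that the left-hand side is proportional to $l_{\boldsymbol\beta}$, which will substantially reduce the computation. Combining the identity \eqref{bsheur}, $(N_{\lambda_2,\lambda_3})^{t}\,l_{\boldsymbol\beta+2_1}=e(\boldsymbol\beta)\,l_{\boldsymbol\beta}$, with $N_{\lambda,\mu}=c(\lambda,\mu)F_{\lambda,\mu}$ and $F_{\lambda,\mu}=E_{\lambda,\mu}^{t}$ gives $(N_{\lambda_2,\lambda_3})^{t}=c(\lambda_2,\lambda_3)\,E_{\lambda_2,\lambda_3}$. A term-by-term inspection shows $B_{\boldsymbol\beta}=E_{\lambda_2,\lambda_3}$ once $\boldsymbol\beta,\boldsymbol\lambda$ are linked by \eqref{betalambda}: that system yields $\beta_1+\beta_3+d=2\lambda_2$ and $\beta_1+\beta_2+d=2\lambda_3$, and under these substitutions the coefficients of $\Delta_z$, of $\Delta_y$, of the first-order-times-Laplacian terms, and of $\sum_j\partial_{y_j}\partial_{z_j}$ all match. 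Thus for generic $\boldsymbol\beta$ the left-hand side equals a scalar $b(\boldsymbol\beta)=e(\boldsymbol\beta)/c(\lambda_2,\lambda_3)$ times $l_{\boldsymbol\beta}$; and since $B_{\boldsymbol\beta}$ is polynomial in $\boldsymbol\beta$ while $l_{\boldsymbol\beta}$, $l_{\boldsymbol\beta+2_1}$ are holomorphic off the polar planes, it is enough to identify $b$ on a nonempty open set and extend by analytic continuation.

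To determine $b$ I would set $u=|y|^2$, $v=|z|^2$, $w=|y-z|^2$ and write $P=w^{(\beta_1+2)/2}u^{\beta_3/2}v^{\beta_2/2}$, so the target is $b(\boldsymbol\beta)\,P/w$. From $\nabla_y w=2(y-z)$, $\nabla_y u=2y$, $\Delta_y u=\Delta_y w=2d$ (and the symmetric formulas in $z$), together with the polarizations $2\,y\!\cdot\!z=u+v-w$, $2\,y\!\cdot\!(y-z)=u-v+w$, $2\,z\!\cdot\!(y-z)=u-v-w$, each of the six terms of $B_{\boldsymbol\beta}P$ becomes $P$ times a rational function with denominators among $w,u,v,uw,vw,uv,w^2$. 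For example one computes
\[
\frac{\Delta_z P}{P}=\frac{(\beta_1+2)(\beta_1+\beta_2+d)}{w}+\frac{\beta_2(\beta_1+\beta_2+d)}{v}-\frac{(\beta_1+2)\beta_2\,u}{wv},
\]
and $\Delta_yP/P$ by the symmetry $(u,\beta_2)\leftrightarrow(v,\beta_3)$. Because the structural guarantee already forces every denominator other than $w$ to drop out, it suffices to read off the coefficient of $1/w$ in $B_{\boldsymbol\beta}P/P$; this yields
\[
b(\boldsymbol\beta)=(\beta_1+d)(\beta_1+2)(\beta_1+\beta_2+\beta_3+2d)(\beta_1+\beta_2+\beta_3+d+2).
\]

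The main obstacle is the leading term $w\,\Delta_y\Delta_z P$. Expanding $\Delta_y(PS_z)=(\Delta_yP)S_z+2\nabla_yP\!\cdot\!\nabla_yS_z+P\,\Delta_yS_z$ (with $S_z=\Delta_zP/P$) generates by far the most pieces, including $1/w^2$ contributions that, after multiplication by $w$, re-enter the $1/w$ channel as well as a constant channel that must also cancel. The heart of the proof is the bookkeeping showing that the $1/u$, $1/v$, $u/(wv)$, $v/(wu)$, $w/(uv)$ and constant channels all vanish---forced exactly by the prescribed coefficients $2(\beta_1+\beta_j+d)$, $(\beta_1+\beta_j+d)(\beta_1+\beta_j+2)$ and $-2(\beta_1+\beta_2+d)(\beta_1+\beta_3+d)$---leaving the single $1/w$ term with the claimed coefficient. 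As a consistency check, the factors of $b$ align with the poles of $\mathcal L_{\boldsymbol\beta}$: $(\beta_1+d)$ with the first-type plane $\beta_1=-d$ and $(\beta_1+\beta_2+\beta_3+2d)$ with the second-type plane $\mathcal H_0$, the remaining two factors being the auxiliary zeros expected of a Bernstein--Sato polynomial.
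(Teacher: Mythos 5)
Your proposal is correct and takes essentially the same route as the paper: you identify $B_{\boldsymbol\beta}$ with $E_{\lambda_2,\lambda_3}$ via \eqref{betalambda} so that \eqref{bsheur} guarantees proportionality to $l_{\boldsymbol\beta}$ for generic $\boldsymbol\beta$, and then determine the constant by direct computation --- which is exactly what the paper does, its ``proof'' being the preceding construction of $N_{\lambda,\mu}$ followed by the words ``by brute force computation''. Your refinement --- invoking the a priori proportionality to force all channels other than $1/w$ to cancel, so that only the $1/w$ coefficient of $B_{\boldsymbol\beta}P/P$ need be computed, with analytic continuation in $\boldsymbol\beta$ handling non-generic parameters --- is a genuine economy over a full verification, and your sample computation of $\Delta_z P/P$ checks out.
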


\section{Applications of the Bernstein-Sato identity}

The first application of the Bernstein-Sato identity is the computation of the residues of the distribution $\mathcal L_{\boldsymbol \beta}$ along the plane $\mathcal H_k$ by induction over $k$.

\begin{proposition} Let $\boldsymbol \beta^0$ be such that 
$\beta_1^0+\beta_2^0+\beta_3^0=-2d-2k-2$ for some  $k\in \mathbb N$. Assume that $\beta_j\notin -d-2\mathbb N$ ($j=1,2,3$), and $\beta^0_1\neq -2$. Then, for $f\in \mathcal S(E)$ and $g\in \mathcal S(E\times E)$,
\begin{equation}
Res(\mathcal L_{\boldsymbol \beta}, \boldsymbol \beta^0) (f\otimes g\big) =\frac{1}{(2k+2)(2k+d)(\beta_1^0+2)(\beta_1^0+d)} Res(\mathcal L_{\boldsymbol \beta},{ \boldsymbol \beta}^0+2_1) (f\otimes B_{\boldsymbol \beta^0}^t \,g\big)
\end{equation}

\end{proposition}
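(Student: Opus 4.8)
The plan is to use the Bernstein-Sato identity of Theorem \ref{bs} to relate the distribution $\mathcal L_{\boldsymbol\beta}$ near the deeper plane $\mathcal H_{k+1}$ (where $\beta_1+\beta_2+\beta_3=-2d-2k-2$) to the distribution near the shallower plane $\mathcal H_k$ (where $\boldsymbol\beta+2_1$ lives, since shifting $\beta_1$ by $2$ raises the sum by $2$). Observe that if $\boldsymbol\beta^0\in\mathcal H_{k+1}$ then $\boldsymbol\beta^0+2_1\in\mathcal H_k$, so both residues in the claimed formula make sense. First I would write down, for $\Re(\beta_j)$ large and using the identity $B_{\boldsymbol\beta}\,l_{\boldsymbol\beta+2_1}=b(\boldsymbol\beta)\,l_{\boldsymbol\beta}$, the pairing
\begin{equation*}
\mathcal L_{\boldsymbol\beta}(\varphi)=\int (B_{\boldsymbol\beta}l_{\boldsymbol\beta+2_1})\,\varphi
=\frac{1}{b(\boldsymbol\beta)}\int l_{\boldsymbol\beta+2_1}\,(B_{\boldsymbol\beta}^t\varphi)
=\frac{1}{b(\boldsymbol\beta)}\,\mathcal L_{\boldsymbol\beta+2_1}(B_{\boldsymbol\beta}^t\varphi)\ ,
\end{equation*}
where the middle step is integration by parts (transposing the differential operator $B_{\boldsymbol\beta}$). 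Since both sides are meromorphic in $\boldsymbol\beta$, this identity of meromorphic functions holds throughout $\mathbb C^3$.

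Next I would take residues along the line $\boldsymbol\beta=\boldsymbol\beta^0+t\cdot(\text{direction transverse to }\mathcal H_{k+1})$ as $\boldsymbol\beta\to\boldsymbol\beta^0$. The key point is that $\mathcal L_{\boldsymbol\beta}$ has a simple pole at $\boldsymbol\beta^0\in\mathcal H_{k+1}$, while $\mathcal L_{\boldsymbol\beta+2_1}$ has a simple pole at the image point $\boldsymbol\beta^0+2_1\in\mathcal H_k$; crucially, the prefactor $1/b(\boldsymbol\beta)$ is \emph{holomorphic and nonvanishing} at $\boldsymbol\beta^0$ under the stated hypotheses $\beta_1^0\neq -2$ and $\beta_1^0\notin -d-2\mathbb N$ (the factors $\beta_1+2$ and $\beta_1+d$ of $b$ do not vanish), so that $b(\boldsymbol\beta^0)$ can be evaluated by continuity. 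Because $B_{\boldsymbol\beta}^t$ depends polynomially on $\boldsymbol\beta$, it tends to $B_{\boldsymbol\beta^0}^t$ as $\boldsymbol\beta\to\boldsymbol\beta^0$, and the residue operation commutes with applying the (limit of the) operator to $\varphi$. Taking residues on both sides therefore yields
\begin{equation*}
\mathrm{Res}(\mathcal L_{\boldsymbol\beta},\boldsymbol\beta^0)(\varphi)=\frac{1}{b(\boldsymbol\beta^0)}\,\mathrm{Res}(\mathcal L_{\boldsymbol\beta},\boldsymbol\beta^0+2_1)(B_{\boldsymbol\beta^0}^t\varphi)\ ,
\end{equation*}
after which I would substitute $\varphi=f\otimes g$ and evaluate the constant.

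It remains to check that $b(\boldsymbol\beta^0)=(2k+2)(2k+d)(\beta_1^0+2)(\beta_1^0+d)$ under the constraint $\beta_1^0+\beta_2^0+\beta_3^0=-2d-2k-2$. Indeed $b(\boldsymbol\beta)=(\beta_1+d)(\beta_1+2)(\beta_1+\beta_2+\beta_3+2d)(\beta_1+\beta_2+\beta_3+d+2)$, and on $\mathcal H_{k+1}$ one has $\beta_1^0+\beta_2^0+\beta_3^0+2d=-2k-2$ and $\beta_1^0+\beta_2^0+\beta_3^0+d+2=-d-2k$, so the last two factors are $-(2k+2)$ and $-(2k+d)$, whose product restores the positive sign and gives the stated denominator. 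The main obstacle, and the step requiring genuine care rather than routine algebra, is justifying that the residue at the \emph{deeper} pole $\boldsymbol\beta^0$ is correctly captured by this one-step recursion: one must verify that $\mathcal L_{\boldsymbol\beta+2_1}$ has only a \emph{simple} pole at $\boldsymbol\beta^0+2_1$ (so that the residue on the right is well defined and the transported residue genuinely equals the residue on the left) and that no spurious contribution arises from the variation of $B_{\boldsymbol\beta}^t$ in the transverse direction — this is where the simplicity of the poles established in Theorem~1.1 and the non-vanishing of the relevant factors of $b$ are essential.
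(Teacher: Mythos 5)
Your proposal is correct and takes essentially the same route as the paper: the paper's proof consists precisely of writing $\mathcal L_{\boldsymbol\beta}(f\otimes g)=\frac{1}{b(\boldsymbol\beta)}(\ell_{\boldsymbol\beta+2_1}, f\otimes B_{\boldsymbol\beta}^t g)$ via the Bernstein--Sato identity and transposition, and then taking residues at $\boldsymbol\beta^0$ on both sides. Your evaluation of $b(\boldsymbol\beta^0)=(2k+2)(2k+d)(\beta_1^0+2)(\beta_1^0+d)$ and your remarks on why the holomorphic prefactor and the polynomially varying operator $B_{\boldsymbol\beta}^t$ may simply be evaluated at $\boldsymbol\beta^0$ are correct details that the paper leaves implicit.
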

\begin{proof}
For generic values of $\boldsymbol \beta$, from the Bernstein-Sato identitiy, 
\[\mathcal L_{\boldsymbol \beta} (f\otimes g) = (\ell_{\boldsymbol \beta}, f\otimes g) = \frac{1}{b(\boldsymbol \beta)}(B_{\boldsymbol \beta}\,\ell_{\beta+2_1}, f\otimes g)= \frac{1}{b(\boldsymbol \beta)}(\ell_{\boldsymbol \beta+2_1}, f\otimes B_{\boldsymbol \beta}^t\,g)\ ,
\]
and compute the residue at $\boldsymbol \beta^0$ on both sides. 
\end{proof}

Let $C_{\boldsymbol \beta}=B_{\boldsymbol \beta}^t$. Except for the change of parameters, this is nothing but the operator $F_{\lambda,\mu}$. 
\begin{proposition} 
\begin{equation}
\begin{split}
C_{\boldsymbol \beta} = B_{\boldsymbol \beta}^t  =& \ \ \vert y-z\vert^2 \,\Delta_y\,\Delta_z  \\
& +2(\beta_1+\beta_2+d+2) \sum_{j=1}^d (z_j-y_j) \frac{\partial}{\partial z_j}\Delta_y\\
& +2(\beta_1+\beta_3+d+2) \sum_{j=1}^d (y_j-z_j) \frac{\partial}{\partial y_j}\Delta_z\\\
&+(\beta_1+\beta_2+2d)(\beta_1+\beta_2+d+2) \Delta_y\\
&-2(\beta_1+\beta_2+d+2)(\beta_1+\beta_3+d+2)\sum_{j=1}^d \frac{\partial^2}{\partial y_j\partial z_j}\\
&+(\beta_1+\beta_3+2d)(\beta_1+\beta_3+d+2) \Delta_z\ .
\end{split}
\end{equation}
\end{proposition}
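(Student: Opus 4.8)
The plan is to compute the formal transpose of $B_{\boldsymbol\beta}$ directly, term by term, with respect to the standard $L^2$-pairing on $E\times E$ (the pairing in the $(y,z)$ variables used in the preceding proposition), and to match the outcome against the stated formula for $C_{\boldsymbol\beta}$. I would rely on the elementary rules: transposition reverses composition, $(PQ)^t=Q^tP^t$; multiplication by a function is self-transpose; each first-order field $\frac{\partial}{\partial y_j}$ (resp. $\frac{\partial}{\partial z_j}$) transposes to its negative; and consequently every constant-coefficient operator of even total order, in particular $\Delta_y$, $\Delta_z$ and $\sum_j\frac{\partial^2}{\partial y_j\partial z_j}$, is self-transpose. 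Writing $r^2=\vert y-z\vert^2$, the only terms of $B_{\boldsymbol\beta}$ whose transposes are not immediate are those with the variable coefficients $r^2$ and $(z_j-y_j)$; these I would treat first, since they are the source of all the lower-order corrections.

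For the leading term $r^2\Delta_y\Delta_z$ I would use $(r^2\cdot\,\Delta_y\Delta_z)^t=\Delta_y\Delta_z\circ(r^2\cdot\,)$ and expand by the Leibniz rule. One first finds $\Delta_z(r^2h)=2d\,h+4\sum_j(z_j-y_j)\frac{\partial h}{\partial z_j}+r^2\Delta_z h$, and applying $\Delta_y$ to this gives
\begin{equation*}
\begin{split}
\Delta_y\Delta_z(r^2h)=&\ r^2\Delta_y\Delta_z h+4\sum_j(z_j-y_j)\frac{\partial}{\partial z_j}\Delta_y h+4\sum_j(y_j-z_j)\frac{\partial}{\partial y_j}\Delta_z h\\
&+2d\,\Delta_y h+2d\,\Delta_z h-8\sum_j\frac{\partial^2 h}{\partial y_j\partial z_j}\ .
\end{split}
\end{equation*}
Thus the leading term alone already generates, besides $r^2\Delta_y\Delta_z$, the two first-order structures with coefficient $4$, a $2d$-multiple of each Laplacian, and the mixed second-order term with coefficient $-8$.

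Next I would transpose the two first-order terms. Using $\big((z_j-y_j)\frac{\partial}{\partial y_j}\Delta_z\big)^t=\Delta_z\circ(-\frac{\partial}{\partial y_j})\circ((z_j-y_j)\cdot\,)$ and the Leibniz rule, summation over $j$ yields
\begin{equation*}
\begin{split}
\Big(2(\beta_1+\beta_3+d)\sum_j(z_j-y_j)\tfrac{\partial}{\partial y_j}\Delta_z\Big)^t=&\ 2d(\beta_1+\beta_3+d)\Delta_z\\
&+2(\beta_1+\beta_3+d)\sum_j(y_j-z_j)\tfrac{\partial}{\partial y_j}\Delta_z\\
&-4(\beta_1+\beta_3+d)\sum_j\tfrac{\partial^2}{\partial y_j\partial z_j}\ ,
\end{split}
\end{equation*}
and symmetrically (exchanging $y\leftrightarrow z$ and $\beta_3\leftrightarrow\beta_2$) for the other first-order term. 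The three remaining terms of $B_{\boldsymbol\beta}$ are constant-coefficient and hence self-transpose, so they pass through unchanged.

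Finally I would collect like terms. The two first-order structures immediately assemble to the coefficients $2(\beta_1+\beta_2+d+2)$ and $2(\beta_1+\beta_3+d+2)$ demanded by the statement. The remaining verification is purely algebraic: for the $\Delta_y$-coefficient one checks
\[2d+2d(\beta_1+\beta_2+d)+(\beta_1+\beta_2+d)(\beta_1+\beta_2+2)=(\beta_1+\beta_2+2d)(\beta_1+\beta_2+d+2)\ ,\]
with the analogous identity for $\Delta_z$, while the mixed second-order coefficient factors as
\[-8-4(\beta_1+\beta_2+d)-4(\beta_1+\beta_3+d)-2(\beta_1+\beta_2+d)(\beta_1+\beta_3+d)=-2(\beta_1+\beta_2+d+2)(\beta_1+\beta_3+d+2)\ .\]
The only genuine difficulty is bookkeeping: one must track every lower-order term produced by the Leibniz expansions—especially those coming from the fourth-order leading term, which contributes to five of the six coefficients—so that the intended factorizations can be recognized at the end.
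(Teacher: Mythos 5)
Your computation is correct, and all three closing identities check out (with $a=\beta_1+\beta_2+d$, $b=\beta_1+\beta_3+d$ the mixed coefficient is $-8-4a-4b-2ab=-2(a+2)(b+2)$, and the $\Delta_y$ coefficient is $2d+2da'+ (a')(\beta_1+\beta_2+2)$ with $a'=\beta_1+\beta_2+d$, which indeed equals $(\beta_1+\beta_2+2d)(\beta_1+\beta_2+d+2)$). The route is, however, not the one the paper takes: the paper gives no direct transposition of $B_{\boldsymbol\beta}$ at all, but simply observes that $C_{\boldsymbol\beta}=B_{\boldsymbol\beta}^t$ is, up to the change of parameters $2\lambda_2=\beta_1+\beta_3+d$, $2\lambda_3=\beta_1+\beta_2+d$ coming from \eqref{betalambda}, the operator $F_{\lambda,\mu}=E_{\lambda,\mu}^t$ of \eqref{flambdamu}, whose explicit form was already obtained from the Fourier-transform computation of the intertwining operator $N_{\lambda,\mu}$. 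What the paper's route buys is a one-line deduction once the earlier proposition is in hand, together with a conceptual explanation of why the transpose has this particular factored form (it is the covariant operator $F_{\lambda,\mu}$ in disguise); what your direct Leibniz-rule bookkeeping buys is a self-contained verification that does not rely on the Fourier computation and that independently confirms the $E_{\lambda,\mu}$/$F_{\lambda,\mu}$ transposition of the earlier proposition. Both are complete proofs; yours is the more elementary.
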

To write an expression for the residue at a pole in $\mathcal H_k $, where $k\in \mathbb N$, let use the following convention: for $\boldsymbol \beta = (\beta_1,\beta_2,\beta_3)$ and $k\in \mathbb N$, let  \[\boldsymbol \beta - (2k)_1 = (\beta_1-2k, \beta_2,\beta_3)\]

Now, for $\boldsymbol \beta \in \mathcal H_0$,  define the differential operator $E_{\boldsymbol \beta}^{(k)}$ 
on $E\times E$ by 
\begin{equation*}
C_{\boldsymbol \beta}^{(0)} = \Id ,\quad C_{\boldsymbol \beta}^{(k)} = C_{\boldsymbol \beta-2_1}\circ \dots \circ C_{\boldsymbol \beta - {(2k)}_1}
\end{equation*}

\begin{theorem} Let $\boldsymbol \beta^0\in \mathcal H_0$, and let $k\in \mathbb N$. Assume that $\beta_j\notin -d-2\mathbb N$ ($j=1,2,3)$ and $\beta^0_1\notin \{ 0,2,\dots, 2k-2\}$. Then

\begin{equation}
Res(\mathcal L_{\boldsymbol \beta}, \boldsymbol \beta^0 -(2k)_1) (f\otimes g) = c_k(\boldsymbol \beta^0) \int_E f(x) \big(C_{\boldsymbol \beta^0}^{(k)}g\big)(x,x) dx
\end{equation}
where
\[c_k(\boldsymbol \beta^0) = \frac{1}{16^k}\frac{1}{ k!}\frac{1}{(\rho)_k}\frac{1}{{(-\frac{\beta^0_1}{2})}_k}\frac{1}{{(-\frac{\beta^0_1}{2}-\rho+1)}_k}c_0(\boldsymbol \beta^0)\ .
\]
\end{theorem}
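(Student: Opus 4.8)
The plan is to prove the formula for $\mathrm{Res}(\mathcal L_{\boldsymbol\beta}, \boldsymbol\beta^0-(2k)_1)$ by induction on $k$, using the previous proposition as the induction step. The base case $k=0$ is exactly Proposition \ref{res0} (the $\mathcal H_0$ residue with constant $c_0(\boldsymbol\beta^0)$), since $C^{(0)}_{\boldsymbol\beta}=\Id$ and $\boldsymbol\beta^0-(0)_1=\boldsymbol\beta^0$. So the real content is to iterate the recursion
\[
\mathrm{Res}(\mathcal L_{\boldsymbol\beta},\boldsymbol\gamma^0)(f\otimes g)=\frac{1}{(2m+2)(2m+d)(\gamma^0_1+2)(\gamma^0_1+d)}\,\mathrm{Res}(\mathcal L_{\boldsymbol\beta},\boldsymbol\gamma^0+2_1)(f\otimes C_{\boldsymbol\gamma^0}\,g)
\]
successively at the points $\boldsymbol\gamma^0=\boldsymbol\beta^0-(2j)_1$ for $j=k,k-1,\dots,1$, each time raising the first coordinate by $2$ and peeling off one factor $C_{\boldsymbol\gamma^0}$. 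Here I use $C_{\boldsymbol\beta}=B_{\boldsymbol\beta}^t$, and I must be careful that the operator appearing in the recursion at the point $\boldsymbol\beta^0-(2j)_1$ is precisely $C_{\boldsymbol\beta^0-(2j)_1}$, so that composing them in the order dictated by the induction reproduces the definition $C^{(k)}_{\boldsymbol\beta^0}=C_{\boldsymbol\beta^0-2_1}\circ\dots\circ C_{\boldsymbol\beta^0-(2k)_1}$.

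First I would fix the bookkeeping for the parameters. Applying the recursion at $\boldsymbol\gamma^0=\boldsymbol\beta^0-(2j)_1$ requires that $\gamma^0_1+\gamma^0_2+\gamma^0_3=-2d-2m-2$ for the appropriate $m$; since $\boldsymbol\beta^0\in\mathcal H_0$ means $\beta^0_1+\beta^0_2+\beta^0_3=-2d$, lowering the first coordinate by $2j$ gives the sum $-2d-2j$, which matches the hypothesis of the previous proposition with $m=j-1$. Thus the factor produced at step $j$ is
\[
\frac{1}{(2(j-1)+2)(2(j-1)+d)(\gamma^0_1+2)(\gamma^0_1+d)}=\frac{1}{2j\,(2j+d-2)\,(\beta^0_1-2j+2)(\beta^0_1-2j+d)},
\]
using $\gamma^0_1=\beta^0_1-2j$. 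The operator peeled off at step $j$ is $C_{\boldsymbol\beta^0-(2j)_1}$, and since we descend from $j=k$ down to $j=1$ (applying the operators to $g$ from the inside out), the total operator acting on $g$ is $C_{\boldsymbol\beta^0-2_1}\circ\cdots\circ C_{\boldsymbol\beta^0-(2k)_1}=C^{(k)}_{\boldsymbol\beta^0}$, exactly as defined. The side conditions $\beta^0_j\notin -d-2\mathbb N$ and $\beta^0_1\notin\{0,2,\dots,2k-2\}$ guarantee that none of the intermediate points hits a pole of the first type and that none of the denominators $(\beta^0_1-2j+2)$ vanishes, so the iteration is legitimate.

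The remaining task is to collect the product of the scalar factors over $j=1,\dots,k$ and match it against the stated $c_k(\boldsymbol\beta^0)$. The product of $1/(2j)$ gives $1/(2^k k!)$, the product of $1/(2j+d-2)=1/(2(j+\rho-1))$ gives $1/(2^k(\rho)_k)$ after recognizing $\prod_{j=1}^k(j+\rho-1)=(\rho)_k$, the product of $1/(\beta^0_1-2j+d)=1/(2(\frac{\beta^0_1+d}{2}-j))$ and the product of $1/(\beta^0_1-2j+2)=1/(2(\frac{\beta^0_1}{2}-j+1))$ each contribute a factor $2^{-k}$ together with Pochhammer-type products that I would rewrite as $1/(-\frac{\beta^0_1}{2})_k$ and $1/(-\frac{\beta^0_1}{2}-\rho+1)_k$ respectively. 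Gathering the four factors of $2^{-k}$ yields the overall $16^{-k}$, and the final multiplication by the base constant $c_0(\boldsymbol\beta^0)$ produces $c_k(\boldsymbol\beta^0)$. The main obstacle I expect is purely combinatorial: getting the shifted Pochhammer symbols to come out with the correct sign and argument conventions, in particular verifying that $\prod_{j=1}^k(\frac{\beta^0_1}{2}-j+1)$ and $\prod_{j=1}^k(\frac{\beta^0_1+d}{2}-j)$ equal (up to sign) $(-\frac{\beta^0_1}{2})_k$ and $(-\frac{\beta^0_1}{2}-\rho+1)_k$; this is a routine but sign-sensitive reindexing using $(a)_k=(-1)^k(-a-k+1)_k$, and the even number of reflections involved should make the signs cancel.
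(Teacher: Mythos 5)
Your proposal is correct and is precisely the argument the paper intends: the theorem is stated immediately after the recursion proposition and the definition of $C^{(k)}_{\boldsymbol \beta}$ with no written proof, the iteration of the recursion from $\boldsymbol \beta^0-(2k)_1$ up to the base point $\boldsymbol \beta^0\in\mathcal H_0$ (Proposition \ref{res0}) being left to the reader. Your bookkeeping checks out --- four factors of $2^{-k}$ give $16^{-k}$, and the two falling factorials become $(-\tfrac{\beta^0_1}{2})_k$ and $(-\tfrac{\beta^0_1}{2}-\rho+1)_k$ with the signs cancelling --- the only cosmetic slip being that the products over $(\beta^0_1-2j+2)$ and $(\beta^0_1-2j+d)$ correspond to $(-\tfrac{\beta^0_1}{2})_k$ and $(-\tfrac{\beta^0_1}{2}-\rho+1)_k$ respectively, which you list in the opposite order at one point without affecting the final constant.
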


The Bernstein-Sato operator can also be used to describe a family of covariant bidifferential operators. Let $\lambda,\mu$ be in $\mathbb C$. For $k\in \mathbb N$, let $F_{\lambda,\mu}^{(k)}$ be the bidifferential operator defined by
\begin{equation}\label{covbidiff2}
F_{\lambda,\mu}^{(k)}f (x)= F_{\lambda+k-1,\,\mu+k-1}\circ\dots \circ F_{\lambda,\mu}f(x,x) ,
\end{equation}
where $F_{\lambda,\mu}$ is the differential operator on $E\times E$ defined by \eqref{flambdamu}.

\begin{theorem} Let $\lambda, \mu$ in $\mathbb C$, and $k\in \mathbb N$. Then the operator $F_{\lambda,\mu}^{(k)}$ is conformally covariant with respect to $(\pi_\lambda\otimes \pi_\mu, \pi_{\lambda+\mu+\rho+2k})$.
\end{theorem}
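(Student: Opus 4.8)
The plan is to leverage the intertwining property of the operator $N_{\lambda,\mu}$ established in the previous section, transfer it to the differential operator $F_{\lambda,\mu}$, iterate it across shifted parameters, and finally restrict to the diagonal. First I would record that, for generic $(\lambda,\mu)$, the operator $N_{\lambda,\mu}$ built from the Knapp--Stein operators and $M$ intertwines $\pi_\lambda\otimes\pi_\mu$ with $\pi_{\lambda+1}\otimes\pi_{\mu+1}$; this is exactly the construction noted before Theorem \ref{bs}, obtained by composing the intertwining relation \eqref{intw} with Lemma \ref{matrcoeff}. Since $N_{\lambda,\mu}=c(\lambda,\mu)F_{\lambda,\mu}$ with $c(\lambda,\mu)$ a nonzero scalar for generic parameters, the same relation holds for $F_{\lambda,\mu}$, namely
\[F_{\lambda,\mu}\circ\big(\pi_\lambda(g)\otimes\pi_\mu(g)\big)=\big(\pi_{\lambda+1}(g)\otimes\pi_{\mu+1}(g)\big)\circ F_{\lambda,\mu}.\]

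The first genuine point is to extend this relation from generic $(\lambda,\mu)$ to all $(\lambda,\mu)\in\mathbb C^2$, because the iterated construction \eqref{covbidiff2} uses the shifted parameters $(\lambda+j,\mu+j)$, which may leave the generic locus on which $c(\lambda,\mu)$ is defined and nonzero. Here I would argue by analytic continuation: fixing $g$ near the identity, $f\in\mathcal S(E\times E)$ and an evaluation point $(y,z)$, I would factor out the common factor $\kappa(g^{-1},y)^{\rho+\lambda}\kappa(g^{-1},z)^{\rho+\mu}$ from both sides of the desired identity. Since $F_{\lambda,\mu}$ has coefficients polynomial in $(\lambda,\mu)$, and since differentiating a power $\kappa^{\rho+\lambda}$ only produces polynomial factors in $\lambda$ times $\kappa^{\rho+\lambda}$ and logarithmic-derivative terms, what remains after this factorisation is a polynomial identity in $(\lambda,\mu)$ with coefficients independent of the parameters. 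As it holds on the Zariski-dense generic set, it holds identically, so $F_{\lambda,\mu}$ intertwines for every $(\lambda,\mu)$.

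Next I would iterate and restrict. Writing $\widetilde F^{(k)}_{\lambda,\mu}=F_{\lambda+k-1,\mu+k-1}\circ\dots\circ F_{\lambda,\mu}$ for the operator on $E\times E$ before restriction, the chain $\pi_\lambda\otimes\pi_\mu\to\pi_{\lambda+1}\otimes\pi_{\mu+1}\to\dots\to\pi_{\lambda+k}\otimes\pi_{\mu+k}$ composes to show that $\widetilde F^{(k)}_{\lambda,\mu}$ intertwines $\pi_\lambda\otimes\pi_\mu$ with $\pi_{\lambda+k}\otimes\pi_{\mu+k}$. Applying this to $\big(\pi_\lambda(g)\otimes\pi_\mu(g)\big)f$ and then setting $y=z=x$ in the definition $F^{(k)}_{\lambda,\mu}f(x)=\widetilde F^{(k)}_{\lambda,\mu}f(x,x)$, the target factor collapses to
\[\kappa(g^{-1},x)^{\rho+\lambda+k}\kappa(g^{-1},x)^{\rho+\mu+k}=\kappa(g^{-1},x)^{2\rho+\lambda+\mu+2k},\]
while $\widetilde F^{(k)}_{\lambda,\mu}f\big(g^{-1}(x),g^{-1}(x)\big)=F^{(k)}_{\lambda,\mu}f\big(g^{-1}(x)\big)$. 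Since $2\rho=d$, the exponent equals $\rho+(\lambda+\mu+\rho+2k)$, which is precisely the factor defining $\pi_{\lambda+\mu+\rho+2k}$, giving the asserted covariance.

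I expect the main obstacle to be the analytic-continuation step of the second paragraph, which is what licenses composing the intertwiners at shifted parameters irrespective of the zeros and poles of $c(\lambda,\mu)$; the iteration and the diagonal exponent bookkeeping of the third paragraph are then routine, provided one keeps careful track of the addition of the conformal factors.
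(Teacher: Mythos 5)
Your proof is correct and follows essentially the same route as the paper: iterate the covariance of $F_{\lambda,\mu}$ with respect to $(\pi_\lambda\otimes\pi_\mu,\pi_{\lambda+1}\otimes\pi_{\mu+1})$ through the shifted parameters and then compose with the restriction to the diagonal, which is covariant with respect to $(\pi_{\lambda+k}\otimes\pi_{\mu+k},\pi_{\lambda+\mu+\rho+2k})$. Your second paragraph in fact supplies a detail the paper leaves implicit, namely the analytic-continuation argument extending the intertwining relation of $F_{\lambda,\mu}$ from the generic locus (where $N_{\lambda,\mu}=c(\lambda,\mu)F_{\lambda,\mu}$ with $c(\lambda,\mu)$ nonzero and finite) to all $(\lambda,\mu)\in\mathbb C^2$, which is needed since the theorem is stated without genericity assumptions.
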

\begin{proof} Recall that the operator $F_{\lambda,\mu}$ is  covariant w.r.t. $(\pi_\lambda\otimes \pi_\mu, \pi_{\lambda+1}\otimes \pi_{\mu+1})$. So, by induction, 
$F_{\lambda+k-1,\, \mu+k-1}\circ\dots \circ F_{\lambda\mu}$ is covariant w.r.t. $(\pi_\lambda\otimes\pi_\mu, \pi_{\lambda+k}\otimes \pi_{\mu+k})$. Now the map
\[\mathcal C^\infty_c(E\times E)\ni f \longmapsto \overline {f} \in \mathcal C^\infty_c(E),
\]
where $\overline {f} (x) = f(x,x)$, is covariant w.r.t. $(\pi_{\lambda+k}\otimes \pi_{\mu+k}, \pi_{\lambda+\mu+\rho+2k})$. The assertion follows.
\end{proof}

For $k=1$, one gets
\[F_{\lambda,\,\mu}^{(1)} = 4(\mu+1)(\mu+\rho)\Delta_y -8 (\lambda+1)(\mu+1) R+4(\lambda+1)(\lambda+\rho)\Delta_z
\]
to be compared with \eqref{resh1}.

\section{Final remarks}

\medskip
\noindent
{\bf 1.} The construction of the covariant differential operator $N_{\lambda,\mu}$ admits a
natural generalization. Let $\tau$ be the standard representation of $G$ and $\tau'$
its dual representation.  Choose highest weight vectors $v$ and $\phi$ (with respect to some  suitable ordering) for $\tau$ and
$\tau'$, respectively. Then up to some constant, the multiplication by the matrix coefficient

\begin{equation*}
	g_1, g_2 \longmapsto \left< \tau(g_1)v,\tau'(g_2)\phi \right>
\end{equation*}
coincides with $M$ from lemma \ref{matrcoeff}. Upon replacing $\tau$ by another
irreducible finite-dimensional representation of $G$ one obtains a multiplication operator
that intertwines the tensor products of general (i.e. not necessarily spherical) principle
series representations (see \cite{b}). Using appropriate intertwining operators one obtains an
associated differential operator. 
We would also like to point out that the operators used by Oksak in his work \cite{ok} on invariant trilinear forms for 
$G = Sl_2({\mathbb{C}}) \simeq Spin(3,1)$ are of this type.  See also \cite{ka}.
\medskip

\noindent
{\bf 2.} A byproduct of Proposition \ref{res0} is that the residue at a point $\boldsymbol \beta \in \mathcal H_0$ vanishes identically if $\frac{-\beta_1-d}{2}\in -\mathbb N$, i.e. if $\beta_1\in -d+2m, m\in \mathbb N$. For such a value, observe that
\[\lambda_1 = \frac{\beta_2+\beta_3}{2}+\rho = \frac{-\beta_1-2d}{2} +\rho = -m\ .
\]
Now for $m\geq 1$, there exists a nontrivial differential operator $R_m$ (see \cite{c}) on $\mathcal C^\infty_c(E)$ which is covariant w.r.t. $(\pi_{-m}, \pi_m)$. Now let \[\tilde{\boldsymbol \beta}= (\beta_1-2m, \beta_2+2m, \beta_3+2m)\ ,\] so that  \[\widetilde {\boldsymbol \lambda}=(m, \lambda_2,\lambda_3)\ .\]
As $\widetilde {\beta_1}+\widetilde {\beta_2} +\widetilde{\beta_3} = -2d+2m$,  $\widetilde \beta$ is no longer a pole. So the form $\mathcal L_{\widetilde{\boldsymbol\beta}}$ is well defined, and the form
\[(f_1,f_2,f_3)\longmapsto \mathcal L_{\widetilde{\boldsymbol\beta}}(R_mf_1,f_2,f_3)
\]
is invariant with respect to $(\pi_{-m},\pi_{\lambda_2},\pi_{\lambda_3})$. However, the relation $\lambda_1+\lambda_2+\lambda_3 = -\rho$ guarantees that the form
\[(f_1,f_2,f_3)\longmapsto \int_E f_1(x)f_2(x)f_3(x) dx
\]
is invariant under $(\pi_{\lambda_1},\pi_{\lambda_2}, \pi_{\lambda_3})$. So, for $\lambda_1 = -m$ and $(\lambda_2,\lambda_3)$ generic, we have produced \emph{two} (linearly independant) trilinear invariant forms on $\mathcal C^\infty_c(E)\times\mathcal C^\infty_c(E)\times \mathcal C^\infty_c(E)$ w.r.t. $(\pi_{-m},\pi_{\lambda_2},\pi_{\lambda_3})$ . Although we won't develop these aspects here, the same remark can be used to produce, for specific values of $\boldsymbol \lambda$, two (linearly independant) bidifferential operators covariant under the same actions of the conformal group. Notice that this is in concordance with the results and the philosophy of  \cite{kr} and \cite {m}.

\medskip

\noindent

{\bf 3.} The relation between our formula for covariant bidifferential operators \eqref{covbidiff2} and the formul\ae   \ obtained in \cite{or} or in \cite{m} is still to be investigated, and the coefficients which relate them seem to be important. In the classical setting (i.e. for the original Rankin-Cohen operators acting on the upper half-plane), much effort has been devoted to understand the structure of this family of operators (see \cite{z}, \cite{cmz}, \cite{uu}, \cite{cm}, \cite{p}). We hope that our realization of these operators will add to the understanding of the family of generalized Rankin Cohen operators.

\bigskip
\footnotesize{ \noindent Addresses
\\(RB) Mathematisches Institut Universit\"at T\"ubingen, Auf der Morgenstelle 10,
72076 T\"ubingen,   Germany\\
(JLC) Institut \'Elie Cartan, Universit\'e Henri Poincar\'e (Nancy 1),
54506 Vandoeuvre-l\`es-Nancy, France.\\

\medskip

\noindent \texttt{{
 Ralf.Beckmann@uni-tuebingen.de, jlclerc@iecn.u-nancy.fr
}}


\begin{thebibliography}{99}\itemsep=-.2pc

\bibitem{b} Beckmann R., thesis in preparation.

\bibitem{c} Clerc J-L., {\it Singular conformally invariant trilinear forms and covariant differential operators on the sphere}, submitted

\bibitem{co} Clerc J-L. and \O rsted B., {\it Conformally invariant trilinear forms on the sphere}, to appear in Ann. Instit. Fourier

\bibitem{ckop} Clerc J-L., Kobayashi T., \O rsted B. and Pevzner M., {\it Generalized Bernstein-Reznikov integrals},  Math. Annalen {\bf 349} (2011), 395--431

\bibitem{cmz} Cohen P., Manin Y. and Zagier D., {\it Automorphic pseudo-differential operators}, Algebraic aspects of integrable systems, 17--47, Progr. Nonlinear Diff. Equations Appl. {\bf 26}, Birkh\" auser Verlag (1997)

\bibitem{cm} Connes A. and Moscovici H., {\it Rankin-Cohen brackets and the Hopf algebra of transverse geometry}, Mosc. Math. J. {\bf 4} (2004), 111--130

\bibitem{de} Deitmar A., {\it Invariant triple products}, Int. J. Math.Sci. (2006), art. ID 48274

\bibitem{d} Dobrev V.K., {\it New generalized Verma modules and multilinear intertwining differential operators}, J. Geom. and Phys. {\bf 25} (1998), 1--28

\bibitem{e} El Gradechi A., {\it The Lie theory of the Rankin-Cohen brackets and allied bidifferential operators}, Adv. in Math. {\bf 207} (2006), 484--531

\bibitem{gs} Gelfand I. and Shilov G., {\it Generalized functions} vol. 1, Academic Press (1964)

\bibitem{h} H\" ormander L., {\it The analysis of linear partial differential operators I}, Springer Verlag, Berlin (1990)

\bibitem{ka} Kashiwara M., {\it The universal Verma module and the b-function}, Algebraic groups and related topics (Kyoto/Nagoya, 1983), Adv. Stud. Pure Math. {\bf 6}, North Holland, Amsterdam (1985).

\bibitem{kr} Kroeske J. {\it Invariant bilinear differential pairings on parabolic geometries}, Thesis, University of Adela\"{\i}de, June 2008, arXiv:0904.3311v1

\bibitem{m} Michel J-P., {\it Conformally equivariant quantization, a complete classification}, (2011), arXiv:1102.4065v1

\bibitem{mol} Molcanov V., {\it Tensor products of unitary representions of the three-dimensional Lorentz group}, Math. USSR Izvestija, {\bf 15} (1980), 113--143

\bibitem{ok} Oksak A., {\it Trilinear Lorentz invariant forms}, Comm. Math. Phys. {\bf 29} (1973), 189--217

\bibitem{or} Ovsienko V. and Redou P., {\it Generalized transvectants-Rankin-Cohen brackets}, Lett. Math. Physics {\bf 63} (2003), 19--28

\bibitem{p} Pevzner M., {\it Rankin-Cohen brackets and associativity}, Lett. Math. Phys. (2008), 195--202

\bibitem{sab1} Sabbah C., {\it Proximit\'e \'evanescente, I}, Compositio Math. {\bf 62} (1987), 283--328, {\it II}, Compositio Math. {\bf 64} (1987), 213--241 

\bibitem{sab2} Sabbah C., {\it Polyn\^omes de Bernstein-Sato \`a plusieurs variables}, S\' eminaire \' Equations aux d\'eriv\'ees partielles (\' Ecole Polytechnique), exp. 19 (1986--1987).

\bibitem{tak} Takahashi R., {\it Sur les repr\'esentations unitaires des groupes de Lorentz g\'en\'eralis\'es}, Bull. Soc. Math. France {\bf 91} (1963) 289--433

\bibitem{uu} Unterberger A. and Unterberger J., {\it Algebras of symbols and modular forms}, J. Anal. Math. {\bf 68} (1996), 121--143

\bibitem{z} Zagier D., {\it Modular forms and differential operators}, Proc. Indian Acad. Sci. {\bf 104} (1994), 57--75
\end{thebibliography}
\end{document}